\documentclass[11pt]{article}
\usepackage{graphicx}
\usepackage{amsmath}
\usepackage{booktabs}
\usepackage{mathtools}
\usepackage{amssymb}
\usepackage{enumitem}
\usepackage{subcaption}
\usepackage{float}
\usepackage{listings}
\usepackage{xcolor}
\usepackage[nodisplayskipstretch]{setspace}
\usepackage{tikz}
\usetikzlibrary{decorations.markings}
\usetikzlibrary{arrows.meta}
\usepackage{amsthm}
\usepackage[colorlinks,bookmarksopen,bookmarksnumbered,citecolor=blue, linkcolor=red, urlcolor=blue]{hyperref}
\usepackage{cleveref}

\newtheorem{theorem}{Theorem}[section]
\newtheorem{problem}{Problem}[section]

\newtheorem{lemma}{Lemma}[section]

\newtheorem{claim}{Claim}[section]

\allowdisplaybreaks

\makeatletter
\def\leftharpoonfill@{\arrowfill@\leftharpoonup\relbar\relbar}
\def\rightharpoonfill@{\arrowfill@\relbar\relbar\rightharpoonup}
\newcommand\rbjt{\mathpalette{\overarrow@\rightharpoonfill@}}
\newcommand\lbjt{\mathpalette{\overarrow@\leftharpoonfill@}}
\makeatother

\title{Two-block cycles and chromatic number of Hamiltonian digraphs %Without $C(k,\ell)$
}
\author{Ruilin Zheng\thanks{Email: rlzhengmath@163.com}, Junying Lu\thanks{Email: lujunying\_math@163.com}, Xiaolin Wang\thanks{ Email: xiaolinw@fzu.edu.cn}~, Yaojun Chen\thanks{ Corresponding author. Email: yaojunc@nju.edu.cn}
\\
\small{$^{*,\S}$School of Mathematics, Nanjing University, Nanjing
210093, P.R. CHINA}\\
\small{$^\dagger$School of Mathematics and Statistics, Nanjing University of Science and Technology,}\\
\small{
 Nanjing 210094, P.R. China}\\
\small{$^\ddagger$School of Mathematics and Statistics, Fuzhou University, Fuzhou
350108, P.R. China}
}

\date{ }

\begin{document}

\maketitle

\begin{abstract}
Let $k$ and $\ell$ be positive integers. The family $C(k,\ell)$ consists of all digraphs obtained from two internally vertex-disjoint directed paths of lengths at least $k$ and $\ell$, respectively, and  identifying their initial vertices and their terminal vertices. Addario-Berry, Havet and Thomass\'e (JCT-B, 2007) asked whether, for any positive integers $k$ and $\ell$ with $k+\ell \ge 4$, the chromatic number $\chi(D)$ is at most $k+\ell-1$ for every $C(k,\ell)$-free strongly connected  digraph $D$. Let $D$ be a $C(k,\ell)$-free Hamiltonian digraph.  Kim, Kim, Ma and Park (JGT, 2018) showed that $\chi(D) \le k+\ell$ and the bound is attained when $k+\ell=5$.
In this paper, we prove that $\chi(D) \le k+\ell-1$ for $k+\ell\ge 6$ and this bound is best possible for all $k+\ell\geq 6$, 
which resolves the problem posed by Addario-Berry, Havet and Thomass\'e for Hamiltonian digraphs.\\

\noindent {\it AMS classification:} 05C15, 05C20, 05C38, 05C45\\[1mm]
\noindent {\it Keywords:} Chromatic number; Hamiltonian digraphs; Cycles with two blocks
\end{abstract}

\baselineskip=0.202in

\section{Introduction}
Throughout this paper, all graphs and digraphs are simple, that is, containing no loops or multiple edges, although a pair of opposite arcs is allowed in  digraphs. An \emph{oriented} graph is a digraph obtained from a simple graph by replacing each edge with exactly one of the two possible arcs. The length $|P|$ of an oriented path $P$ is the number of arcs it contains. For a digraph $D$, we denote its arc set by $A(D)$. The underlying graph of a digraph $D$ is the simple graph on $V(D)$ in which two vertices are adjacent whenever they are joined by at least one arc of $D$. The chromatic number $\chi(D)$ of a digraph $D$ is defined as the chromatic number of its underlying graph. A digraph $D$ is called $\chi$-chromatic if $\chi(D)=\chi$.

A classical theorem of Gallai~\cite{Gallai} and Roy~\cite{Roy} states that every $\chi$-chromatic digraph contains a directed path on $\chi$ vertices. This motivates the study of oriented graphs that are unavoidable in $\chi$-chromatic digraphs. More precisely, an oriented graph $H$ is called \emph{$\chi$-universal} if every digraph of chromatic number $\chi$ contains $H$ as a subdigraph. 
A \emph{block} of an oriented path or oriented cycle is a maximal directed subpath in it.
%Then an oriented path or oriented cycle is a union of some blocks. 
By applying a classic result of Erd\H{o}s \cite{Erdos},
Cohen, Havet, Lochet and Nisse \cite{Cohen} proved that for any given integers $\chi,m$, there exists a digraph $D$ such that $\chi(D)\ge \chi$ and  every oriented cycle in $D$ has at least $m$ blocks.
Hence, every connected $\chi$-universal oriented graph must be an oriented tree.
In 1980, Burr \cite{Burr} conjectured that every oriented tree of $n$ vertices is $(2n-2)$-universal, which remains open. One can see more results about it in \cite{ElSahili2004,HaggkvistThomason1991,HavetThomasse2000a,HavetThomasse2000b,KuhnMycroftOsthus2011}.

It is also interesting to investigate special families of oriented trees.
A special family of oriented paths, known as oriented paths with two blocks, has received considerable attention.
El-Sahili \cite{ElSahili} conjectured that every oriented path with two blocks on $n\ge 4$ vertices is $n$-universal. Subsequently, El-Sahili and Kouider \cite{ElSahiliKouider} proved that such an oriented path is $(n+1)$-universal, and Addario-Berry, Havet and Thomass\'e \cite{AddarioBerry} later confirmed this conjecture.

Although there exists a digraph with arbitrarily large chromatic number and number of blocks in all oriented cycles, these digraphs are neither tournaments nor strongly connected \cite{Cohen}. So it is also interesting to investigate the existence of oriented cycles in tournaments of order $\chi$ or $\chi$-chromatic strongly connected digraphs.

For tournaments, 
Benhocine and Wojda \cite{Benhocine} proved the existence of $D(n,p)$  in every tournament of order $n$, where  $D(n,p)$ is an oriented cycle with two blocks of order $n$ in which one block has length $p$ for  $1\leq p\leq n-1$. 

\begin{theorem}[Benhocine and Wojda \cite{Benhocine}]
\label{thm:benhocine}
Every tournament $T$ of order $n\ge5$ contains $D(n,p)$ for every $1\le p\le n-1$, unless one of the following holds.
\begin{enumerate}[label=(\arabic*)]
    \item $n=5$, $p=1$, and $T=T_5^*$.
    \item $n=6$, $p=2$, and $T=T_6^*$.
\end{enumerate}
Here $T_5^*$ and $T_6^*$ are  tournaments shown in Figure~\ref{fig:two}.
\end{theorem}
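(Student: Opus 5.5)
This is a cited theorem of Benhocine and Wojda, so the plan reconstructs a plausible proof. The approach is induction on $n$, with the base cases $n=5,6$ (and possibly $n=7$) checked directly. That direct check is where $T_5^*$ and $T_6^*$ surface as exceptions. By reversing all arcs of $T$ we may assume $p\le \lfloor n/2\rfloor$, since $D(n,p)$ in the reverse of $T$ corresponds to $D(n,n-p)$ in $T$. Recall that $D(n,p)$ consists of two internally disjoint directed paths with common origin $a$ and common terminus $b$, of lengths $p$ and $n-p$, covering all $n$ vertices.

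For the inductive step, take $n\ge 7$ and fix $p$.
\begin{enumerate}[label=(\roman*)]
\item Choose a vertex $v$ and apply induction to $T-v$, which gives a copy $C$ of $D(n-1,p)$ or of $D(n-1,p-1)$; having two choices gives flexibility. Here $n-1\ge 6$, and we must avoid the exceptional pair $(n-1,p)=(6,2)$ with $T-v=T_6^*$. Since $n\ge7$ leaves many choices of $v$, we pick $v$ so that $T-v$ is not isomorphic to an exceptional tournament. This is possible unless $T$ is highly structured, in which case we verify $T$ directly.
\item Insert $v$ into the long block of $C$, say $a=x_0x_1\cdots x_m=b$, to lengthen it by one. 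Insertion between $x_i$ and $x_{i+1}$ works whenever $x_i\to v\to x_{i+1}$. By the standard Rédei-type argument, such a consecutive pair fails to exist only if every arc between $v$ and the path points the same way as at its ends. That forces $v\to x_0$, or $x_m\to v$, with $v$ dominating or being dominated by the whole path.
\item Handle these failure cases separately. If $v\to a$, we make $v$ the new origin: we use the arc $v\to a$ and reroute, replacing the first arc of the short block by an arc from $v$, or rotating the cycle so that the roles of $a$ and $v$ are exchanged. Here we exploit the fact that $v$ dominates all of the long block. The case $b\to v$ is symmetric.
\end{enumerate}

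Inserting into the short block, or inserting into the long block starting from $D(n-1,p-1)$, gives the two target lengths. So each target $D(n,p)$ has at least two routes, and this redundancy is what closes the gaps.

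The main obstacle is step (iii) combined with the case $p=1$, where the short block is a single arc $a\to b$ and cannot be rerouted. In that case $D(n,1)$ is a Hamiltonian path from $a$ to $b$ together with the arc $a\to b$. I would treat $p=1$ separately. If $T$ is strong, take a Hamiltonian cycle from Camion's theorem and seek an arc $x_i\to x_{i+2}$ whose endpoints are at distance two along the cycle. Failing that, use Moon's vertex-pancyclicity to find a Hamiltonian path whose ends are joined by a back arc. If $T$ is not strong, take a Hamiltonian path from the initial component to the terminal component, whose ends are joined by an arc in the correct direction. The exceptional tournament $T_5^*$ arises exactly when all these options fail for $n=5$. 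I would then argue that for $n\ge6$ an extra vertex always breaks the obstruction, because a vertex outside a copy of $T_5^*$ creates the needed chord. For general $p$, I expect the delicate work to be showing that the failure configurations of steps (ii) and (iii) occur together only when $T-v$ is isomorphic to $T_6^*$. I would settle that by a finite case analysis on the neighbourhood of $v$.
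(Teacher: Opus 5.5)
The paper gives no proof of this statement. It is quoted from Benhocine and Wojda and used only for $n=k+\ell$ in the proof of Theorem~\ref{thm:mindeg}. Your plan therefore has to stand on its own, and it does not: the inductive step is asserted rather than proved, and it rests on a wrong description of when insertion fails.

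\textbf{The insertion step.} Write the long block as $a=x_0x_1\cdots x_m=b$. There is no $i$ with $x_i\to v\to x_{i+1}$ exactly when, for some $-1\le j\le m$, we have $v\to x_0,\dots,x_j$ and $x_{j+1},\dots,x_m\to v$. For $0\le j<m$ this gives $v\to a$ and $b\to v$, while $v$ neither dominates nor is dominated by the path. This mixed case is the generic obstruction, and your step (iii), which relies on $v$ dominating the whole long block, does not address it.

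Even when $v$ does dominate the long block, the rerouting does not automatically work. Making $v$ the new origin while keeping the short block of length $p$ requires $v\to y_1$, where $y_1$ is the successor of $a$ on the short block, and you do not control that arc. If $y_1\to v$, the available reroutings give $D(n,p+1)$ rather than $D(n,p)$ in general. These are: using $vx_1\cdots x_m$ together with $v\,a\,y_1\cdots b$, or inserting $v$ into the short block.

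The ``two routes'' through $D(n-1,p)$ and $D(n-1,p-1)$ do not close this gap by themselves. The two copies supplied by induction are unrelated, so the failure of one tells you nothing about the other. Showing that both cannot fail (except when $T-v\cong T_6^*$) is the entire content of the induction. It is also not a bounded case check, since it concerns the arcs from $v$ to cycles of unbounded length.

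\textbf{The case $p=1$.} This step is wrong as stated: a single arc $x_i\to x_{i+2}$ on a Hamiltonian cycle does not give $D(n,1)$. Label the vertices of $T_5^*$ in Figure~\ref{fig:two} as $x_0,\dots,x_4$, clockwise along the outer pentagon starting from the leftmost vertex. Then $x_0x_1x_2x_3x_4x_0$ is a Hamiltonian cycle with $x_0\to x_2$ and $x_2\to x_4$, yet $T_5^*$ contains no $D(5,1)$.

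What does work is a pair of consecutive such arcs, $x_i\to x_{i+2}$ and $x_{i+1}\to x_{i+3}$. These give the Hamiltonian path $x_{i+1}x_{i+3}x_{i+4}\cdots x_ix_{i+2}$ together with the arc $x_{i+1}\to x_{i+2}$; this is the configuration used in the proof of Theorem~\ref{thm:main-l1}.

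Your fallback is also unspecified. $D(n,1)$ needs a Hamiltonian path whose origin dominates its terminus; a path whose terminus dominates its origin only gives a Hamiltonian cycle. Nothing in the sketch shows how vertex-pancyclicity supplies the right kind of path, or why an extra vertex always destroys the $T_5^*$ obstruction when $n\ge 6$.

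A minor point: no reversal is needed to assume $p\le n/2$. $D(n,p)$ and $D(n,n-p)$ are the same cycle, and reversing $T$ maps $D(n,p)$ to itself.
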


\begin{figure}[htbp]
\centering
\begin{subfigure}[b]{0.48\textwidth}
    \centering
    \begin{tikzpicture}[
        vertex/.style={circle, draw=black, fill=white, line width=0.8pt, minimum size=5pt, inner sep=0pt},
        arr/.style={-{Stealth[length=2.2mm,width=1.5mm]}, draw=black, line width=0.8pt, shorten >=3pt, shorten <=3pt},
        lab/.style={font=\large},
        scale=1.35 
    ]
    \coordinate (T5)  at (90:1.25);
    \coordinate (R5)  at (18:1.25);
    \coordinate (BR5) at (-54:1.25);
    \coordinate (BL5) at (-126:1.25);
    \coordinate (L5)  at (162:1.25);

    \foreach \p/\q in {L5/T5,T5/R5,R5/BR5,BR5/BL5,BL5/L5,L5/R5,BR5/T5,T5/BL5,R5/BL5,L5/BR5}{
        \draw[arr] (\p) -- (\q);
    }

    \foreach \x in {T5,R5,BR5,BL5,L5}{\node[vertex] at (\x) {};}
    \node[lab] at (0,-1.3) {\(T_{5}^{*}\)}; 
    \end{tikzpicture}
    \label{fig:T5}
\end{subfigure}
\hfill
\begin{subfigure}[b]{0.48\textwidth}
    \centering
    \begin{tikzpicture}[
        vertex/.style={circle, draw=black, fill=white, line width=0.8pt, minimum size=5pt, inner sep=0pt},
        arr/.style={-{Stealth[length=2.2mm,width=1.5mm]}, draw=black, line width=0.8pt, shorten >=3pt, shorten <=3pt},
        redarr/.style={arr, draw=red},
        lab/.style={font=\large},
        scale=1.35
    ]
    \coordinate (R6)  at (0:1);
    \coordinate (TR6) at (45:1.414);
    \coordinate (TL6) at (135:1.414);
    \coordinate (L6)  at (180:1);
    \coordinate (BL6) at (225:1.414);
    \coordinate (BR6) at (315:1.414);
    \foreach \p/\q in {TL6/TR6,TR6/R6,R6/BR6,L6/BL6,BL6/BR6,TL6/L6,L6/R6,TL6/BR6,BL6/TR6,L6/TR6,TL6/R6,L6/BR6,BL6/R6}{
        \draw[arr] (\p) -- (\q);
    }
    \foreach \p/\q in {BL6/TL6}{
        \draw[arr] (\p) to[bend left=40] (\q);
    }
    \foreach \p/\q in {BR6/TR6}{
        \draw[arr] (\p) to[bend right=40] (\q);
    }
    \foreach \x in {R6,TR6,TL6,L6,BL6,BR6}{\node[vertex] at (\x) {};}
    \node[lab] at (0,-1.3) {\(T_{6}^{*}\)}; 
    \end{tikzpicture}
    \label{fig:T6}
\end{subfigure}
\caption{The  tournaments $T_{5}^{*}$ and $T_{6}^{*}$.}
\label{fig:two}
\end{figure}

Let $D$ be an $\chi$-chromatic strongly connected digraph.
Bondy \cite{Bondy} proved that  $D$ contains a directed cycle of length at least $\chi$. A natural generalization of Bondy's theorem concerns oriented cycles with two blocks.

The family $C(k,\ell)$ consists of all digraphs obtained from two internally vertex-disjoint directed paths of lengths at least $k$ and $\ell$, respectively, and  identifying their initial vertices and their terminal vertices.
 We say  a digraph is \emph{$C(k,\ell)$-free} if it contains no member of $C(k,\ell)$.
 
 In 2007, Addario-Berry, Havet and Thomass\'e \cite{AddarioBerry} posed the following problem.

\begin{problem}[Addario-Berry, Havet and Thomass\'e \cite{AddarioBerry}]\label{prob-AHT}
Let $k,\ell$ be two positive integers, and let $D$ be an $(k+\ell)$-chromatic strongly connected  digraph with $k+\ell \ge4$. Does $D$ necessarily contain a member of $C(k,\ell)$?
\end{problem}

Cohen, Havet, Lochet and Nisse \cite{Cohen} made the first advance on Problem \ref{prob-AHT}. 

\begin{theorem}[Cohen, Havet, Lochet and Nisse \cite{Cohen}]
Let $k$ and $\ell$ be integers with $k\ge\ell\ge2$ and $k\ge4$, and let $D$ be a  $C(k,\ell)$-free strongly connected digraph. Then
\[
\chi(D)\le(k+\ell-2)(k+\ell-3)(2\ell+2)(k+\ell+1)=O((k+\ell)^4).
\]
\end{theorem}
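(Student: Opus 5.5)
The plan is to partition the arc set of $D$ into four classes $A_1,A_2,A_3,A_4$ and show that the spanning subdigraph $D_i=(V(D),A_i)$ satisfies $\chi(D_i)\le c_i$, where $c_1,\dots,c_4$ are the four factors $k+\ell-2$, $k+\ell-3$, $2\ell+2$, $k+\ell+1$ in some order. The product bound then follows from the elementary fact that the chromatic number of a union of graphs on the same vertex set is at most the product of their chromatic numbers: colour each vertex by the vector of its colours in the $D_i$. For each class I intend to bound $\chi(D_i)$ by the Gallai--Roy theorem recalled in the introduction. A $\chi$-chromatic digraph contains a directed path on $\chi$ vertices, so it suffices to show that a suitable orientation of $D_i$ has no directed path on $c_i+1$ vertices. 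The reorientation is allowed because only the underlying graph matters.

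To define the classes, I fix a root $r$ and an out-BFS tree $T$ rooted at $r$, with levels $L_0=\{r\},L_1,\dots$ given by the distance from $r$. Every arc $uv$ then satisfies $\mathrm{lev}(v)\le \mathrm{lev}(u)+1$. Since $D$ is strongly connected, every vertex also has a directed path back to $r$; this is what will let me close up two-block cycles. The classes are defined by position relative to $T$:
\begin{itemize}
\item forward arcs between consecutive levels;
\item arcs inside a level;
\item backward arcs $uv$ with $v$ an ancestor of $u$ in $T$;
\item backward arcs with $v$ not an ancestor of $u$.
\end{itemize}
Backward arcs are further split by whether the level drop is less than $k+\ell$, and short drops are handled by colouring by level modulo a constant. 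The idea is that each arc class, together with a tree path, yields one block of a prospective member of $C(k,\ell)$. The second block comes either from another tree path to the least common ancestor or from a return path to $r$.

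The key steps, in order, are as follows. First, for arcs $uv$ of a given class, I use the tree paths from the least common ancestor $w$ of $u$ and $v$. These give two internally disjoint directed paths from $w$ ending at $u$ and at $v$, one of which is extended by the arc $uv$. Their lengths are governed by the level differences, so a long directed path of such arcs would force the two branches to have lengths at least $k$ and $\ell$, producing a member of $C(k,\ell)$. This should give the bounds linear in $k+\ell$. Second, for the class involving an asymmetric factor, namely $2\ell+2$, I use paths that return to the root, obtained from strong connectivity. Here only the shorter block needs to be long, and one counts how many times a directed path of such arcs can cross between the two sides.

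I expect the main obstacle to be the disjointness bookkeeping in this last step. Once a directed path in $D_i$ has more than $c_i$ vertices, I must extract two internally disjoint directed paths with common ends from the union of tree paths and the path itself, and the tree paths of different arcs may intersect. My first attempt would be to take the vertex of the long path whose tree ancestor is highest, and argue by the BFS property that the tree paths to the two extreme vertices diverge immediately below that ancestor. Failing that, I would refine the partition further, for instance by the parity of the least common ancestor's level. Each such refinement costs an extra factor of two, which the slack in the stated product appears to allow.
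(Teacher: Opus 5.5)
The paper only quotes this theorem from Cohen, Havet, Lochet and Nisse and gives no proof, so your proposal has to stand on its own, and its central step fails. Your one tool for bounding $\chi(D_i)$ is that a long directed path of class-$i$ arcs, glued to tree paths, yields a member of $C(k,\ell)$. The blocks must be directed paths of $D$, so this can only refer to the original orientation of $D_i$ (or its reverse). A directed path in a genuinely different orientation traverses arcs of $D$ against their direction and cannot serve as a block. For the original orientation, the implication is false for two of your classes.

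\emph{Arcs inside a level.} Take $w,x_0,\ldots,x_m$ with arcs $(w,x_i)$ for all $i$, $(x_{i-1},x_i)$ for $1\le i\le m$, and $(x_m,w)$. This digraph is strong, and $L_1=\{x_0,\ldots,x_m\}$ contains the directed path $x_0x_1\cdots x_m$. Every cycle of the underlying graph is $wx_a\cdots x_bw$ with $a<b$. In $D$ this is either a directed cycle or a two-block cycle whose block $(w,x_b)$ has length $1$, so $D$ is $C(k,\ell)$-free for all $k\ge\ell\ge2$. Your least-common-ancestor construction gives exactly the blocks $wx_0\cdots x_m$ and $wx_m$.

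\emph{Long backward arcs to ancestors.} Take the path $p_0p_1\cdots p_{qm}$ with $m\ge k+\ell$ and add $(p_{im},p_{(i-1)m})$ for $1\le i\le q$. It is strong and its BFS tree from $p_0$ is the path itself. The added arcs are ancestor arcs of drop $m$ and form a directed path on $q+1$ vertices. Yet every cycle of the underlying graph is one of the directed cycles $p_{(i-1)m}\cdots p_{im}p_{(i-1)m}$, so there is no member of $C(k,\ell)$ at all. For ancestor arcs your construction cannot even start: the least common ancestor of $u$ and $v$ is $v$, and tree path plus arc is a directed cycle.

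What is missing is an idea that actually controls backward arcs. Along a branch, the basic configuration forbidden by $C(k,\ell)$-freeness is a long \emph{crossing} pair. Arcs $(p_j,p_i)$ and $(p_{j'},p_{i'})$ with $i<i'<j<j'$ give the internally disjoint paths $p_jp_ip_{i+1}\cdots p_{i'}$ and $p_jp_{j+1}\cdots p_{j'}p_{i'}$, of lengths $i'-i+1$ and $j'-j+1$. The arcs of a directed path of ancestor arcs meet end to end and never cross, which is why the example above is harmless. A colouring therefore has to be extracted from the crossing structure instead.

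This class is not marginal. If $D$ has a Hamiltonian cycle $v_0v_1\cdots v_{n-1}v_0$ and every chord goes from a higher to a lower index, then the BFS tree from $v_0$ is the path $v_0\cdots v_{n-1}$ and every chord is an ancestor arc. So this class alone contains Hamiltonian instances of the kind Kim, Kim, Ma and Park and the present paper treat with arguments built on crossing chords (Lemma~\ref{lem:boundary-crossing}).

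Your ``return paths to $r$'' and ``counting crossings between the two sides'' are never defined. The return paths supplied by strong connectivity are not controlled by the levels and may meet the tree paths anywhere.

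Separately, the bookkeeping does not produce the stated number. Your scheme has more colouring steps than factors: four classes, the level-mod colouring of short drops, and the parity refinements. You also never say which class receives $k+\ell-2$, $k+\ell-3$, $2\ell+2$ or $k+\ell+1$, or why.
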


Subsequently, Kim, Kim, Ma and Park \cite{Kim}  improved this bound by using a different approach.

\begin{theorem}[Kim, Kim, Ma and Park \cite{Kim}]
Let $k$ and $\ell$ be integers with $k\ge\ell\ge1$ and $k\ge2$, and let $D$ be a $C(k,\ell)$-free strongly connected digraph. Then
\[
\chi(D)\le2(2k-3)(k+2\ell-1)<12k^2=O((k+\ell)^2).
\]
\end{theorem}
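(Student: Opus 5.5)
The plan is to follow the familiar Bondy / Gallai–Roy scheme. Split the arcs of $D$ into two spanning subdigraphs $D_1$ and $D_2$ so that
\[
\chi(D)\le \chi(D_1)\,\chi(D_2),\qquad \chi(D_1)\le 2(2k-3),\qquad \chi(D_2)\le k+2\ell-1 .
\]
The split comes from a breadth-first out-tree.

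Fix a root $r$ and let $L_i$ be the set of vertices at directed distance exactly $i$ from $r$. Since $D$ is strongly connected, every vertex lies in some $L_i$. Let $T$ be a BFS out-branching. Every arc goes from $L_i$ to $L_j$ with $j\le i+1$, so each arc is a \emph{forward} arc ($j=i+1$), a \emph{horizontal} arc ($j=i$) or a \emph{backward} arc ($j<i$). Call a backward arc \emph{short} if $i-j<2k-3$ and \emph{long} otherwise.

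Let $D_1$ be spanned by the forward arcs and the short backward arcs. Colour each $v\in L_i$ by $i \bmod 2(2k-3)$; the two arc types have span $1$ and $i-j\in[1,2k-4]$ respectively. The part of this step I expect to need the most care is making sure the colour classes of this level colouring really are independent in $D_1$. If the exact period $2(2k-3)$ fails, I would refine it to a coarser partition of the levels into intervals of length $2k-3$, coloured alternately. Let $D_2$ consist of the horizontal arcs and the long backward arcs.

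The heart of the proof is to show that $D_2$ is $(k+2\ell-2)$-degenerate, using $C(k,\ell)$-freeness against the tree $T$. The intended structure is as follows.

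For a long backward arc $uv$ with $u\in L_i$, $v\in L_j$ and $i-j\ge 2k-3$, take the tree path $P_{r,v}$ and the tree path from $v$ (or from an ancestor) to $u$. If $v$ is an ancestor of $u$, then $P_{v,u}$ plus $uv$ is a directed cycle of length at least $2k-2$. If not, let $x$ be the last common ancestor of $u$ and $v$. Then $T[x,u]\cup uv$ and $T[x,v]$ are two internally disjoint directed $x$–$v$ paths, and the first has length at least $2k-2$. So the second must be short (length less than $\ell$), or else we have a member of $C(k,\ell)$.

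The same argument applies to horizontal arcs $uw$ inside $L_i$: the paths $T[x,u]\,uw$ and $T[x,w]$ have equal depth from $x$, so their depth must be below $\max(k,\ell)$. Consequently every $D_2$-neighbour of a vertex $u$ lies in a bounded neighbourhood of a short ancestor chain of $u$.

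Next I would order the vertices by decreasing level, breaking ties inside a level by position in $T$. Suppose some vertex $u$ had at least $k+2\ell-1$ earlier $D_2$-neighbours. Then I would take two of them whose ancestor chains branch appropriately and route one path through $u$ and one around it. This gives two internally disjoint paths with lengths at least $k$ and at least $\ell$, again a member of $C(k,\ell)$. The count $k+2\ell-1$ should come from charging at most $k-1$ neighbours to one branch and at most $\ell$ to each of two others. Greedy colouring along this order then gives $\chi(D_2)\le k+2\ell-1$, and multiplying by the bound for $D_1$ yields $2(2k-3)(k+2\ell-1)$.

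The main obstacle is this degeneracy count. It needs a careful case analysis on where the last common ancestors lie, so that the two paths are genuinely internally disjoint and the lengths are exactly at least $k$ and at least $\ell$ with the right roles. If the direct count fails, my fallback is the one used by Bondy: pick the BFS root and the tie-breaking order to maximise the number of long arcs to the root side, and then argue by minimality.
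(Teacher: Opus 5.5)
The paper does not prove this statement; it is quoted from Kim, Kim, Ma and Park. So I am judging your plan on its own. Your first step is fine, and you need not worry about it. Every arc of $D_1$ joins levels whose indices differ by an integer in $[1,\max\{1,2k-4\}]$, which is never a multiple of $2(2k-3)$, so colouring by level modulo $2(2k-3)$ is proper. For $k\ge 3$ even $2k-3$ colours suffice, which suggests the split was read off from the target constant. The whole theorem then rests on $\chi(D_2)\le k+2\ell-1$, and this is not proved. The argument you sketch for it fails.

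\textbf{What is missing.} Your structural observations only constrain horizontal arcs and long backward arcs $uv$ with $v$ \emph{not} an ancestor of $u$. A backward arc to an ancestor closes a directed cycle with $T$ and by itself forces no two-block cycle. You derive nothing about these arcs, yet a strongly connected digraph typically has many of them. Even in the non-ancestor case you only get that the last common ancestor $x$ lies fewer than $\ell$ levels above $v$. But $x$ can be arbitrarily far above $u$, so ``the $D_2$-neighbours of $u$ lie near a short ancestor chain of $u$'' is false.

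\textbf{The elimination order fails.} Take $M$ directed cycles $r\,a^j_1\cdots a^j_N\,r$ ($1\le j\le M$, $N\ge 2k-3$) that pairwise meet only in $r$.
\begin{itemize}
\item This digraph is strongly connected.
\item Every vertex other than $r$ has in- and out-degree $1$. So two internally disjoint directed paths from $x$ to $y$ would force $x=r$ (distinct first arcs) and $y=r$ (distinct last arcs). Hence there is no two-block cycle at all.
\item Rooted at $r$ (any other root behaves the same way), each arc $a^j_N r$ is a long backward arc. So $r$ has $M$ neighbours in $D_2$, all on a deeper level and hence all earlier in your order.
\end{itemize}
Thus the claim ``a vertex with $k+2\ell-1$ earlier $D_2$-neighbours yields a member of $C(k,\ell)$'' is false.

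Reversing the order does not help. Take the path $p_0\cdots p_N$ plus all arcs $p_Np_i$ ($i\le N-2$).
\begin{itemize}
\item It is strongly connected.
\item For $\ell\ge 2$ it is $C(k,\ell)$-free for every $k$: a two-block cycle must start at $p_N$, the only vertex of out-degree at least $2$. The path ending with an arc $p_Np_e$ is then that single arc.
\item The vertex $p_N$ has $N-2k+4$ long backward arcs to shallower vertices.
\end{itemize}
In both examples $D_2$ is a star, so degeneracy itself is not refuted. They do show that the vertex to delete must be chosen according to how the long cycles $T[v,u]+uv$ interact with each other and with the ears given by strong connectivity. That is where the real difficulty lies, and your sketch says nothing about it.

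The ``$(k-1)+\ell+\ell$'' charging and the Bondy-style minimality fallback are not arguments. As it stands, the proposal establishes only $\chi(D_1)\le 2(2k-3)$.
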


A digraph $D$ is called \emph{Hamiltonian} if it contains a directed Hamiltonian cycle. The Hamiltonian case is of particular interest because Hamiltonian digraphs form a broad natural subclass of strongly connected digraphs, and a directed Hamiltonian cycle provides a fixed cyclic ordering that facilitates the analysis of forbidden configurations. In this setting, Kim, Kim, Ma and Park \cite{Kim} obtained the following result.

\begin{theorem}[Kim, Kim, Ma and Park \cite{Kim}]
Let $k$ and $\ell$ be positive integers with $k+\ell\ge3$, and let $D$ be a  $C(k,\ell)$-free Hamiltonian digraph. Then its underlying graph is $(k+\ell-1)$-degenerate, and hence $\chi(D)\le k+\ell$.
\end{theorem}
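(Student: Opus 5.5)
We will show that the underlying graph $G$ of $D$ has a vertex of degree at most $k+\ell-1$. Since every subdigraph of a $C(k,\ell)$-free digraph is again $C(k,\ell)$-free, this will suffice only if we can pass to induced subgraphs, and induced subdigraphs of a Hamiltonian digraph need not be Hamiltonian. We therefore prove a stronger statement in the form actually needed for degeneracy. Let $D$ be a $C(k,\ell)$-free digraph with a Hamiltonian cycle $C=v_0v_1\cdots v_{n-1}v_0$, and let $X\subseteq V(D)$ be nonempty. We claim that some vertex of $X$ has at most $k+\ell-1$ neighbours in $X$. Degeneracy then follows immediately, and $\chi(D)\le k+\ell$ follows by greedy colouring.

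Fix $X$ and suppose, for contradiction, that every $x\in X$ has at least $k+\ell$ neighbours in $X$. Place the vertices on $C$ in cyclic order. For an arc $uv$ with $u,v\in X$, the cycle $C$ splits into two arcs of the circle, and the chord together with the path $C[v,u]$ forms a directed cycle.

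The basic tool is the following configuration. Suppose $x\in X$ has an out-neighbour $y$ (or in-neighbour) such that the forward distance along $C$ from $x$ to $y$ is at least $k$ and at least $\ell$ short of a full turn. Then the chord $xy$ alone gives a path of length $1$, while $C[x,y]$ has length at least $k$. This is not yet two long paths, so chords have to be combined. For $x$ with neighbours $y_1,\dots,y_m$ listed in cyclic order along $C$ starting after $x$, with $m\ge k+\ell$, consider the $k$-th neighbour. Assume $xy_i$ and $y_jx$ are chords with suitable positions. The two internally disjoint paths $x\to y_i \to C[y_i,w]$ and $x\to C[x,\dots]\to w$ meet at a common terminal vertex $w$. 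A counting argument shows that if $x$ has many neighbours spread around $C$, then one path uses an arc into a far neighbour and then continues along $C$, while the other goes along $C$ from $x$. The lengths are governed by the number of $X$-vertices (indeed all vertices) skipped.

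To make the counting clean, the plan is to choose $x\in X$ extremal: take the chord between $X$-vertices whose shorter side along $C$ is minimal, or, more robustly, choose $x$ and a neighbour so that the segment of $C$ between them contains no further chord endpoint configuration of a certain kind. Then among the $\ge k+\ell$ neighbours of $x$, at least $k$ lie on one side and at least $\ell$ usable ones on the other in an appropriate sense. Let $y$ be the $k$-th neighbour encountered going forward from $x$. Split on the orientation of $xy$:
\begin{itemize}
\item If $x\to y$, form the paths $x\,C\,y$ of length $\ge k$ (it passes $k-1$ earlier neighbours, so it has at least $k$ arcs) and $x\to y$. The second path must be lengthened to $\ell$ by rerouting through another chord from $x$ backwards, together with a $C$-segment ending at $y$.
\item If $y\to x$, the symmetric construction uses backward segments.
\end{itemize}

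The main obstacle is this rerouting step: guaranteeing that the second path has length at least $\ell$ and stays internally disjoint from the first. I would attempt it by choosing $y$ as the neighbour that minimises the forward distance subject to there being $\ge \ell$ neighbours of $x$ beyond it, and then using a neighbour $z$ beyond $y$ to build $x\to z\,C\,\cdots$ wrapping around to $y$. This requires the orientation of $xz$ to agree. By pigeonhole over orientations, with $k+\ell$ neighbours there should be enough same-oriented choices, but I expect parity and orientation case analysis to be the real difficulty.
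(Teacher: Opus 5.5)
Your reformulation with an arbitrary set $X$ is sound. It is equivalent to the shortcut reduction in the paper's proof of Theorem~\ref{thm:main-chi-bound}: delete a vertex of small degree, add the arc $(v_{n-1},v_1)$, and lift any member of $C(k,\ell)$ through that arc back to $D$. So everything rests on showing that a Hamiltonian digraph whose underlying graph has minimum degree at least $k+\ell$ contains a member of $C(k,\ell)$. The paper only cites this theorem, but it runs the same scheme for its sharper bound. That core step is missing from your proposal, and the constructions you sketch cannot work as stated.

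In your basic configuration, the two paths are $x\to y_i$ followed by $C[y_i,w]$, and $C[x,w]$. They are never internally disjoint unless $w=y_i$. Two forward segments of $C$ ending at the same vertex are nested, so either $y_i$ lies inside $C[x,w]$ or $x$ lies on $C[y_i,w]$. The rerouting step fails for the same reason. A forward segment of $C$ starting at a vertex $z$ beyond $y$ reaches $x$ before $y$. Any segment of $C$ ending at $y$ enters $y$ through its predecessor, which lies on $C[x,y]$. So any second $x$--$y$ path avoiding $C[x,y]$ must finish with an arc $(w,y)$ from the far side. That is an arc at $y$ not incident with $x$, and the neighbourhood of $x$ says nothing about it.

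In fact the single-vertex strategy cannot work at all. Consider a two-block cycle all of whose arcs are arcs of $C$ or arcs incident with one vertex $x$. Deleting $x$ leaves a directed segment $S$ of $C$ from some $a$ to some $b$. The three non-cyclic orientations of the two arcs at $x$ give block lengths $\{2,|S|\}$ or $\{1,|S|+1\}$, so some block has length at most $2$. Hence when $\min\{k,\ell\}\ge 3$, no extremal choice of $x$ and no pigeonholing over the orientations of its chords can succeed; the degree hypothesis must be used at a second vertex.

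The idea you are missing is to fix a chord and bound the degree of a vertex next to one of its ends, closing with Lemmas~\ref{lem:common-start} and~\ref{lem:common-end}. Two internally disjoint paths with a common start (or end) whose other ends are adjacent yield a member of $C(k,\ell)$ whichever way that edge is oriented. This removes the orientation problem entirely. For example, take $k\ge\ell$ (so $k\ge 2$) and suppose $(v_0,v_{\ell+1})\in A(D)$. For $\ell+1<j\le n-k+1$, the paths $C[v_j,v_0]\odot(v_0,v_{\ell+1})$ and $C[v_1,v_{\ell+1}]$ have lengths at least $k$ and exactly $\ell$, so $v_1v_j\notin E(G)$. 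Hence $N[v_1]\subseteq\{v_{n-k+2},\dots,v_{n-1},v_0,\dots,v_{\ell+1}\}$, a set of $k+\ell$ vertices, contradicting $\delta(G)\ge k+\ell$. This is the first step of the proof of Lemma~\ref{lem:no-b-plus-one-dichord}. A complete proof needs arguments of this type covering every possible chord configuration, and your proposal provides none.
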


The bound $\chi(D)\le k+\ell$ is best possible when $k+\ell=5$, since the Hamiltonian tournament $T_5^*$ shown in Figure \ref{fig:two} is $C(4,1)$-free and has chromatic number $5$, which also implies that Problem~\ref{prob-AHT} has a negative answer when $k+\ell=5$.
In this paper, we prove that Problem~\ref{prob-AHT} has a positive answer when $D$ is Hamiltonian and $k+\ell\geq 6$. 

\begin{theorem}\label{thm:main-chi-bound}
Let $k\ge\ell\ge1$ be integers with $k+\ell\ge6$,
and let $D$ be a  $C(k,\ell)$-free Hamiltonian digraph. Then the underlying graph of $D$ is $(k+\ell-2)$-degenerate, and hence $\chi(D)\le k+\ell-1$.
\end{theorem}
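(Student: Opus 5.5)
Since $C(k,\ell)$-freeness and Hamiltonicity are both preserved under taking a subdigraph that keeps the Hamiltonian cycle, it suffices to show that $D$ has a vertex whose degree in the underlying graph is at most $k+\ell-2$. Degeneracy then follows by an inductive peeling argument. The one point needing care is that deleting a vertex destroys Hamiltonicity. So, following the approach of Kim, Kim, Ma and Park, I would prove the stronger statement that every subgraph of the underlying graph has a vertex of degree at most $k+\ell-2$.

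To handle this, I would work with an ordering. Let $C=v_0v_1\cdots v_{n-1}v_0$ be the Hamiltonian cycle. For a vertex subset $S$, a chord $v_iv_j$ between vertices of $S$ together with the two arcs of $C$ yields two internally disjoint directed paths. Concretely, if $v_i\to v_j$ is an arc, then $v_i v_{i+1}\cdots v_j$ along $C$ and the arc $v_iv_j$ form a two-block cycle with blocks of lengths $j-i$ and $1$. More generally, combining two chords with segments of $C$ gives two paths from a common start to a common end.

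The key lemma I would aim for is the following. Take the subgraph induced on $S$, and let $v$ be the vertex of $S$ appearing first after some reference point in the cyclic order, chosen to be extremal. I would show that $v$ has at most $k+\ell-2$ neighbours in $S$. The argument would consider the out-neighbours and in-neighbours of $v$ listed in the order of $C$.

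If $v$ has out-neighbours $u_1,\dots,u_p$ in cyclic order after $v$, then for $u_a$ and $u_b$ with $a<b$, the path $v\to u_b$ (a single arc) and the path $v\to u_a\to$ (along $C$) $\to u_b$ give a $C(d,1)$ where $d\ge 1+(b-a)$. Richer pairs arise by routing through further chords.

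A counting argument then bounds the numbers of in- and out-neighbours separately. The $C(k,\ell)$-free hypothesis forbids two out-neighbours that are too far apart in a usable sense. This should give out-degree plus in-degree at most $k+\ell-1$. Then one extra unit must be squeezed out, and this is exactly where $k+\ell\ge6$ enters, since for $k+\ell=5$ the tournament $T_5^*$ shows the bound fails.

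The main obstacle is this final improvement from $k+\ell-1$ to $k+\ell-2$. My plan is to argue by contradiction. Suppose every vertex of $S$ has degree at least $k+\ell-1$, and study the equality structure in the Kim, Kim, Ma and Park count. Equality should force the neighbourhood of the extremal vertex $v$ to be very rigid, namely $k-1$ consecutive out-neighbours and $\ell-1$ consecutive in-neighbours, or similar, along $C$. Applying the same rigidity to the next extremal vertex, for instance $v$'s successor on $C$ within $S$, then produces two long paths of lengths $\ge k$ and $\ge \ell$ sharing endpoints.

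Building these two paths needs enough room, which is where $k\ge4$ or $k+\ell\ge6$ is used. I would split into the cases $\ell=1$ and $\ell\ge2$. For $\ell=1$ the forbidden object is just a directed path of length $\ge k$ plus a parallel arc, which simplifies matters. Small configurations, essentially tournaments on about $k+\ell$ vertices, might need to be checked using Theorem~\ref{thm:benhocine} to exclude them, noting that the exceptions $T_5^*$ and $T_6^*$ only occur for $k+\ell\le5$ or for the specific pair $(n,p)=(6,2)$, which would be verified directly.
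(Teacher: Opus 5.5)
\textbf{There is a genuine gap: your proposal contains no argument for the core statement.}

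\textbf{The reduction.} Your reduction is fine in spirit, but that is not where the difficulty lies. The clean way to keep Hamiltonicity, and what the paper does, is a minimal counterexample plus a contraction. If $d(v_0)\le k+\ell-2$, delete $v_0$ and add the arc $(v_{n-1},v_1)$. The result is Hamiltonian, and its underlying graph contains $G-v_0$. It is still $C(k,\ell)$-free, because a copy using $(v_{n-1},v_1)$ becomes a copy in $D$ once that arc is replaced by $v_{n-1}v_0v_1$. Your ``work inside $S$ with the cyclic order of $C$'' amounts to the same thing: add shortcut arcs between $C$-consecutive vertices of $S$. Either way, you are left with exactly Theorem~\ref{thm:mindeg}: every Hamiltonian digraph with $\delta(G)\ge k+\ell-1$ contains a member of $C(k,\ell)$. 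That statement is the whole content of the result, and your proposal does not prove it.

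\textbf{The missing core argument.} There is no canonical ``first vertex after a reference point'' on a Hamiltonian cycle, and you never say why any particular vertex should have small degree. Your two-out-neighbour observation only produces members of $C(d,1)$, so it says nothing once $\ell\ge2$. The following are hopes, not steps:
\begin{itemize}
\item ``richer pairs via further chords'';
\item the unproved count giving degree at most $k+\ell-1$;
\item ``equality forces $k-1$ consecutive out-neighbours and $\ell-1$ consecutive in-neighbours''.
\end{itemize}
A purely local count at one or two vertices is also implausible. In a Hamiltonian tournament on $k+\ell$ vertices every degree equals $k+\ell-1$, and the contradiction there comes from the global Benhocine--Wojda theorem, not from anything at a single vertex.

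\textbf{What the paper does instead.} For $n\ge k+\ell+1$ the paper assumes $\delta(G)\ge k+\ell-1$ and argues structurally.
\begin{itemize}
\item It first rules out dichords of specific lengths: $(\ell+1)$- and $(k+1)$-dichords, and anti-$\ell$, anti-$k$, anti-$(\ell+1)$ and anti-$(k+1)$-dichords.
\item It deduces that there are no long chords, so every closed neighbourhood is a fixed window of $C$. For example, $N[v_i]=V(v_{i-\ell}\rbjt{C}v_{i+k})\setminus\{v_{i+\ell+1}\}$ when $k\ge\ell+1$ and $n\ge k+\ell+2$.
\item Only then does it build explicit two-block cycles, with separate constructions for $\ell=1$, $\ell=2$, $k\ge\ell+2$, $k=\ell+1$ and $k=\ell$.
\end{itemize}
Several of these contradictions are global. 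For instance, when $k=\ell$ the missing edges $v_iv_{i+\ell}$ are propagated all the way around $C$. The rigidity that emerges is mainly positional (a window of $C$) rather than your in/out split. Nothing in your plan substitutes for this analysis.
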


For all $k+\ell\geq 6$, 
since each member of $C(k,\ell)$ has at least  $k+\ell$ vertices, any Hamiltonian tournament on $k+\ell-1$ vertices shows that the bounds in Theorem~\ref{thm:main-chi-bound} are  tight. 

Theorem \ref{thm:main-chi-bound} is mainly based on the following Theorem \ref{thm:mindeg}.

\begin{theorem}\label{thm:mindeg}
Let $k\ge\ell\ge1$ be integers with $k+\ell\ge6$. Let $D$ be a Hamiltonian digraph, and $G$ be its underlying graph. If $\delta(G)\ge k+\ell-1$, then $D$ contains a member of $C(k,\ell)$.
\end{theorem}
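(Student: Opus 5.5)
Fix a Hamiltonian cycle $C=v_0v_1\cdots v_{n-1}v_0$ of $D$, with indices taken modulo $n$. Since $\delta(G)\ge k+\ell-1$, we have $n\ge k+\ell$. The basic device is the classical one for two-block cycles along a Hamiltonian cycle. If $v_iv_j$ is an arc of $D$ with $j\neq i+1$, then the arc $v_iv_j$ together with the path $C[v_i,v_j]$ forms a cycle with two blocks. One block is the single arc, of length $1$; the other has length $d^+(i,j):=j-i \pmod n$. So any chord can serve as a short block. To obtain a block of length at least $\ell\ge 2$ as well, I would combine two chords. Suppose $v_av_b$ and $v_cv_d$ are arcs with $a,c,b,d$ appearing in this cyclic order along $C$ (the chords cross). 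Then the paths $v_a\to v_b\to C[v_b,v_d]$ and $v_a\to C[v_a,v_c]\to v_c\to v_d$ share only their ends $v_a$ and $v_d$. Their lengths are $1+(d-b)$ and $(c-a)+1$, with differences taken mod $n$. The first goal is therefore to find such a crossing pair, or a single chord, for which both lengths meet the targets $k$ and $\ell$.

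The plan proceeds as follows. First, assume $D$ is $C(k,\ell)$-free and choose a vertex $v=v_0$. The $k+\ell-1$ neighbours of $v$ in $G$, apart from $v_{\pm1}$, give at least $k+\ell-3$ chords at $v$. Each chord $v_0v_j$ (out-arc) yields the two-block cycle with blocks of lengths $1$ and $j$. Since $\ell=1$ would be handled here, $C(k,1)$-freeness forces every out-neighbour $v_j$ with $j\ge 2$ to satisfy $j\le k-1$, and symmetrically every in-neighbour $v_j$ to satisfy $n-j\le k-1$. In the general case $\ell\ge2$ this single-chord argument only works after rerouting. Using a chord $v_0v_j$ and a chord from inside $C[v_1,v_{j-1}]$ to outside, I would derive the constraint that out-neighbours of $v_0$ lie within distance $k-1$ forward, or within distance $\ell-1$ of the end of a crossing structure. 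In other words, for every vertex, all of its neighbours lie in a window of the cycle of size about $k+\ell-2$ centred appropriately.

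Second, I would count degrees. Having $k+\ell-1$ neighbours confined to a window of $2(k-1)$ positions is not immediately contradictory. The key step is therefore to choose an extremal vertex, namely the one maximizing the length of its longest forward chord, or else an endpoint of a longest chord $v_av_b$. Then I would use crossing chords from the vertices strictly between $v_a$ and $v_b$. Each such vertex $u$ has many neighbours. By maximality of the chord, these neighbours cannot all lie inside $C[v_a,v_b]$ unless the interval is long. If one of them goes outside, it crosses $v_av_b$, and the two-path construction above gives two internally disjoint paths of lengths roughly $|C[v_a,u]|+1$ and $|C[u',v_b]|+1$. I would then pick $u$ at distance about $\ell-1$ from the appropriate end so that both lengths reach $k$ and $\ell$.

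The main obstacle will be the boundary cases. These occur when $n$ is close to $k+\ell$, or when every vertex's neighbourhood is essentially a near-tournament on a short interval, which is exactly where $T_5^*$ and $T_6^*$ arise. For these I would invoke Theorem~\ref{thm:benhocine}. If some $k+\ell$ consecutive vertices, or any $k+\ell$ vertices, induce a tournament containing a Hamiltonian path compatible with $C$, then the tournament contains $D(k+\ell,\ell)$, which is a member of $C(k,\ell)$. The hypothesis $k+\ell\ge6$ excludes the exception $T_5^*$, but I expect the case $k+\ell=6$ with $\ell=2$ (the $T_6^*$ exception) to need a separate check. In that case I would use the extra arc to the rest of the Hamiltonian cycle, or an extra vertex, to lengthen a block. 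The delicate part is the case analysis showing that a near-tournament structure is forced when no crossing pair works.
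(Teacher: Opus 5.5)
Your outline does not prove the theorem for $n\ge k+\ell+1$, which is most of it; only the case $n=k+\ell$ is essentially handled.

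\textbf{The window claim is unsupported.} Your first step says every neighbourhood lies in a window of about $k+\ell-2$ positions. You give no argument for it. Taken literally, it would contradict $\delta(G)\ge k+\ell-1$ at once, so it carries the whole weight of the theorem. Even for $\ell=1$, your single-chord observation only confines $N(v_0)$ to about $2k-2$ positions, which is not contradictory.

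\textbf{Degree counting cannot finish.} The most that $C(k,\ell)$-freeness and the degree condition extract is a structure like $N[v_i]=V(v_{i-\ell}\rbjt{C}v_{i+k})\setminus\{v_{i+\ell+1}\}$. This has exactly $k+\ell$ elements and is consistent with the degree bound, so the final contradiction has to come from forced orientations.

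\textbf{The longest-chord extremal step does not supply this.} Let $v_av_b$ be a longest chord.
\begin{itemize}
\item When the interval $v_a\rbjt{C}v_b$ is long, the crossing argument shows that inner vertices $u$ with $|v_a\rbjt{C}u|\ge\ell$ and $|u\rbjt{C}v_b|\ge k$ have no outside neighbours. But such $u$ can simply have all their neighbours inside the interval, and maximality of $v_av_b$ says nothing about chords inside it.
\item When the interval is short, no such $u$ exists. You control $u$ but not its outside neighbour $u'$. If $u'$ sits near the same end of $v_av_b$ as $u$, each pair of opposite sides of the crossing contains a short side, and no member of $C(k,\ell)$ need arise.
\item Placing $u$ at distance exactly $\ell-1$ from an end lands on the orientation-dependent exceptions (a) and (b) of Lemma~\ref{lem:boundary-crossing}.
\end{itemize}
So the configuration that really has to be ruled out, where all edges are local (cyclic distance at most about $k$), is never addressed.

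\textbf{What is missing.} You need a substitute for the paper's machinery, which runs in three stages.
\begin{itemize}
\item First, specific dichord lengths are excluded: $(\ell+1)$- and $(k+1)$-dichords, and anti-$\ell$-, anti-$k$-, anti-$(\ell+1)$- and anti-$(k+1)$-dichords. The anti-versions use the sets $X_i$ and the monotonicity of $\alpha_i,\beta_i$. For $\ell=2$ the paper uses the inductive Claim~\ref{lem:L24}, and for $\ell=1$ a minimal dichord together with an analysis of $N(v_1)$.
\item Second, take a \emph{shortest} edge whose forward length lies between $k+1$ and $n-\ell-1$. Its minimality pins each $N[v_i]$ down to an exact window.
\item Third, orientation arguments finish: the forced arcs $(v_0,v_k)$ and $(v_{k-1},v_{k+\ell-1})$ when $k\ge\ell+2$; the zigzag path through reversed $2$- and $3$-chords when $k=\ell+1$; and the matching argument on $F_i=v_iv_{i+\ell}$ when $k=\ell$.
\end{itemize}

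\textbf{Two smaller points on $n=k+\ell$.} The $T_6^*$ exception for $(k,\ell)=(4,2)$ needs no extra vertex, and there is none when $n=6$. Choose the tournament $T\subseteq D$ to contain the arcs of $C$; then $T$ is strong, whereas $T_6^*$ is not. Also, Theorem~\ref{thm:benhocine} is usable only when $n=k+\ell$, since for larger $n$ nothing forces $k+\ell$ pairwise adjacent vertices. It therefore cannot serve as the fallback for ``near-tournament'' configurations there.
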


In the end of this section, we introduce some notation. Throughout the rest of this paper, we always let $D$ be a digraph of order $n$, having a Hamiltonian cycle $C=v_0v_1\ldots v_{n-1}v_0$ in which $(v_0,v_1)\in A(D)$. For $0\le i,j\le n-1$,
let $v_i\rbjt{C}v_j$ be the directed path
from $v_i$ to $v_j$ in $C$ and  
$\theta_C(v_i,v_j)=\min\{|v_i\rbjt{C}v_j|,|v_j\rbjt{C}v_i|\}$. Let $G$ be the underlying graph of $D$.
If $v_iv_j\in E(G)$ with $\theta_C(v_i,v_j)=t\geq 2$, then it is called a \emph{$t$-chord}.
An arc $(v_i,v_j)\in A(D)\setminus A(C)$ with $|v_i\rbjt{C}v_j|=t$
is called a \emph{$t$-dichord}. 
If $(v_i,v_j)\in A(D)\setminus A(C)$  is an 
\emph{$(n-t)$-dichord}, then we also call it an \emph{anti-$t$-dichord}. 

For a vertex $v\in V(G)$, its neighborhood is
$N_G(v)=\{u\in V(G)\mid uv\in E(G)\}$, and its closed neighborhood is
$N_G[v]=N_G(v)\cup\{v\}$. Let
$d_G(v)=|N_G(v)|$. When there is no danger of confusion,  we  simply write as $N(v)$, $N[v]$, and $d(v)$. For $D$,
the neighborhood and degree of a vertex are understood to be those in $G$. If the
terminal vertex of a directed path $P$ is the initial vertex of a directed
path $Q$, we write $P\odot Q$ for their concatenation.

The rest of this paper is organized as follows. Section \ref{sec-2} contains preliminary lemmas used throughout the later  proofs. Sections \ref{sec-l1} and \ref{sec-l2} handle the cases $\ell=1$ and $\ell=2$, respectively. In Section \ref{sec-l3}, we discuss $\ell\ge3$, separating into the unbalanced case $k\ge\ell+1$ and the balanced case $k=\ell$. Finally, we prove Theorems \ref{thm:main-chi-bound} and \ref{thm:mindeg} in Section \ref{sec-main}.

\section{Some useful lemmas}\label{sec-2}
In this section, we present several lemmas, which will be used repeatedly in the later  proof. The first three lemmas are easy to prove by the definition of $C(k,\ell)$.
\begin{lemma}\label{lem:rectangle}
Suppose that $D$ contains two vertex-disjoint paths $P_1$ and $P_2$ such that $P_1$ is a directed $(x,y)$-path of length at least $k$ and $P_2$ is a directed $(a,b)$-path of length at least $\ell$.  
If $xa,yb\in E(G)$, then $D$ contains a member of $C(k,\ell)$.
\end{lemma}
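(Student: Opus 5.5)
Lemma~\ref{lem:rectangle} is stated with $xa,yb\in E(G)$, which means only that some arc joins $x$ and $a$ and some arc joins $y$ and $b$; their directions are not fixed. The plan is to check the four possible orientations and, in each, build two internally vertex-disjoint directed paths with common ends, of lengths at least $k$ and $\ell$. Since $P_1$ and $P_2$ are vertex-disjoint, we have $x\neq a$ and $y\neq b$, so each edge really is a single arc between distinct vertices (if both directions are present, choose either one). The closed walk $x P_1 y$, edge $yb$, $b P_2^{-1} a$, edge $ax$ is a cycle on $V(P_1)\cup V(P_2)$. Once the arcs are oriented, this cycle should split into exactly two directed blocks whose lengths meet the required bounds.

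\textbf{Case 1: $(x,a)$ and $(y,b)$ are both arcs.} Take $Q_1=P_1\odot(y,b)$, a directed $(x,b)$-path of length $|P_1|+1\ge k+1$. Take $Q_2=(x,a)\odot P_2$, a directed $(x,b)$-path of length $|P_2|+1\ge \ell+1$. They share only $x$ and $b$, since $P_1$ and $P_2$ are disjoint. So $Q_1\cup Q_2$ belongs to $C(k,\ell)$.

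\textbf{Case 2: $(a,x)$ and $(b,y)$ are both arcs.} By symmetry, the two $(a,y)$-paths $(a,x)\odot P_1$ and $P_2\odot (b,y)$ have lengths at least $k+1$ and $\ell+1$.

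\textbf{Case 3: $(x,a)$ and $(b,y)$ are arcs.} Here $P_1$ is a directed $(x,y)$-path of length at least $k$. The path $(x,a)\odot P_2\odot(b,y)$ is a directed $(x,y)$-path of length at least $\ell+2$. They are internally disjoint, which gives a member of $C(k,\ell)$.

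\textbf{Case 4: $(a,x)$ and $(y,b)$ are arcs.} Symmetrically, $P_2$ is an $(a,b)$-path of length at least $\ell$, and $(a,x)\odot P_1\odot(y,b)$ is an $(a,b)$-path of length at least $k+2$. The only subtlety is the order of roles. The definition of $C(k,\ell)$ is symmetric in the two paths: one path must have length at least $k$ and the other at least $\ell$. So assigning the longer path to $k$ works.

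No step is a real obstacle. The only point needing care is the orientation bookkeeping, in particular recognising that in Cases 3 and 4 the cycle's two blocks have different endpoints from those in Cases 1 and 2.
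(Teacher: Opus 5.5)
Your four-case orientation check is correct, and it is the routine verification the paper has in mind when it says this lemma follows directly from the definition of $C(k,\ell)$; the paper writes out no proof. One small remark: in Case 4 no reassignment of roles is needed, since the $(a,b)$-path through $P_1$ has length at least $k+2$ and $P_2$ has length at least $\ell$.
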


\begin{lemma}\label{lem:common-start}
 Suppose that $D$ contains two internally disjoint directed paths $P_x$ and $P_y$   with the same initial vertex $s$  and  distinct terminal vertices $x\in V(P_x)\backslash V(P_y)$ and $y\in V(P_y)\backslash V(P_x)$. Suppose  $|P_x|\ge k$ and $|P_y|\ge \ell$. If $xy\in E(G)$, then $D$ contains a member of $C(k,\ell)$.
\end{lemma}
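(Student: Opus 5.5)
The plan is to close the two paths with the edge $xy$ into a member of $C(k,\ell)$ whose two directed paths start at $s$ and end at whichever of $x,y$ is the head of the arc. Since $xy\in E(G)$, at least one of the arcs $(x,y)$ and $(y,x)$ lies in $A(D)$, so there are two symmetric cases.

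First I will record the intersection structure. $P_x$ and $P_y$ share the initial vertex $s$ and are internally disjoint. Moreover $x\notin V(P_y)$ and $y\notin V(P_x)$. Hence $V(P_x)\cap V(P_y)=\{s\}$. (Note $s\neq x,y$: otherwise, e.g. $s=x$, we would have $x\in V(P_y)$, contradicting the hypothesis.)

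\emph{Case $(x,y)\in A(D)$.} Let $Q_1=P_x\odot xy$, the path $P_x$ extended by the arc $(x,y)$, and let $Q_2=P_y$.
\begin{itemize}
\item $Q_1$ is a directed path, since $y\notin V(P_x)$, and $|Q_1|=|P_x|+1\ge k+1\ge k$.
\item $Q_2$ is a directed $(s,y)$-path with $|Q_2|\ge \ell$.
\item $V(Q_1)\cap V(Q_2)=(V(P_x)\cup\{y\})\cap V(P_y)=\{s,y\}$, so $Q_1$ and $Q_2$ are internally vertex-disjoint with common ends $s$ and $y$.
\end{itemize}
Thus $Q_1\cup Q_2$ is a member of $C(k,\ell)$ contained in $D$.

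\emph{Case $(y,x)\in A(D)$.} This is symmetric. Take $Q_1=P_x$, of length at least $k$, and $Q_2=P_y\odot yx$, of length at least $\ell+1\ge\ell$. It is a path because $x\notin V(P_y)$. These are two internally disjoint directed $(s,x)$-paths, so again $D$ contains a member of $C(k,\ell)$.

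There is no real obstacle. The only point requiring care is checking that the extended path stays a path and stays internally disjoint from the other one, and this is exactly what the hypotheses $x\in V(P_x)\setminus V(P_y)$ and $y\in V(P_y)\setminus V(P_x)$ guarantee.
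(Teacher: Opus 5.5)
Your proof is correct and is exactly the direct check from the definition of $C(k,\ell)$ that the paper leaves to the reader (it states only that the lemma is easy to prove from the definition): depending on the orientation of $xy$, you extend $P_x$ by $(x,y)$ or $P_y$ by $(y,x)$ to get two internally disjoint directed paths with common ends. You also correctly handle the one point needing care, namely $V(P_x)\cap V(P_y)=\{s\}$, which guarantees the extended walk is still a path and remains internally disjoint from the other path.
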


\begin{lemma}\label{lem:common-end}
Suppose that $D$ contains two internally disjoint directed paths $P_x$ and $P_y$ with distinct initial vertices $x\in V(P_x)\backslash V(P_y)$ and $y\in V(P_y)\backslash V(P_x)$ and the same terminal vertex $t$.  Suppose  $|P_x|\ge k$ and $|P_y|\ge \ell$. If $xy\in E(G)$, then $D$ contains a member of $C(k,\ell)$.   
\end{lemma}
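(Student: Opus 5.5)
The statement concerns two internally disjoint directed paths $P_x$ and $P_y$. They have distinct initial vertices $x$ and $y$ and a common terminal vertex $t$, with $|P_x|\ge k$, $|P_y|\ge \ell$ and $xy\in E(G)$. My plan is to close the two paths into a member of $C(k,\ell)$ using an arc between $x$ and $y$, exactly as in Lemma~\ref{lem:common-start} but with the roles of initial and terminal vertices reversed. Since $xy\in E(G)$, at least one of the arcs $(x,y)$ and $(y,x)$ lies in $A(D)$, so the proof splits into these two cases.

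\emph{Case 1: $(y,x)\in A(D)$.} Here I take the common initial vertex to be $y$ and the common terminal vertex to be $t$. The first path is $Q_1=(y,x)\odot P_x$. This is a directed path because $y\notin V(P_x)$, and its length is $|P_x|+1\ge k$. The second path is $Q_2=P_y$, which has length at least $\ell$. The internal vertices of $Q_1$ are $x$ together with the internal vertices of $P_x$. The internal vertices of $Q_2$ are the internal vertices of $P_y$. These two sets are disjoint, because $P_x$ and $P_y$ are internally disjoint and $x\notin V(P_y)$. Both paths run from $y$ to $t$, and $y\neq t$ since $y\notin V(P_x)$. Hence $Q_1\cup Q_2$ is a member of $C(k,\ell)$.

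\emph{Case 2: $(x,y)\in A(D)$.} This case is symmetric. I take $Q_2=(x,y)\odot P_y$, which has length $|P_y|+1\ge\ell$, and $Q_1=P_x$, which has length at least $k$. Both run from $x$ to $t$. Their interiors are disjoint by the same argument, now using $y\notin V(P_x)$. So again $D$ contains a member of $C(k,\ell)$.

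The main point needing care is the vertex bookkeeping. One must check that the added vertex does not already lie on the other path, and that the two endpoints of each new path are distinct. Both facts follow directly from the hypotheses $x\in V(P_x)\setminus V(P_y)$ and $y\in V(P_y)\setminus V(P_x)$. Nothing else is needed, and the lemma follows in the same way as Lemma~\ref{lem:common-start}.
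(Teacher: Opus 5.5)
Your proof is correct. It is exactly the routine verification the paper has in mind: the paper omits the proof of this lemma, saying only that it follows easily from the definition of $C(k,\ell)$. In each case you prepend the arc between $x$ and $y$ to the path beginning at that arc's head, and you correctly check that the new path is a directed path whose interior is disjoint from that of the other path.
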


\begin{lemma}[Kim, Kim, Ma and Park \cite{Kim}]\label{lem:boundary-crossing}  
Suppose that \( u, v, x, y \) are four distinct vertices such that \( uv, xy \in E(G) \setminus E(C) \), \( x \in V(u\rbjt{C}v) \)  
and \( y \in V(v\rbjt{C}u) \). For positive integers \( k \) and \( \ell \), if \( |u\rbjt{C}x| \geq k - 1 \) and \( |v\rbjt{C}y| \geq \ell - 1 \),  
then \( D \) contains a member of  \( C(k, \ell) \), unless one of the following occurs:

(a) \( |u\rbjt{C}x| = k - 1 \) and \( (u, v), (y, x) \in A(D) \).

(b) \( |v\rbjt{C}y| = \ell - 1 \) and \( (v, u), (x, y) \in A(D) \).
\end{lemma}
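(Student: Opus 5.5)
The statement follows from a direct case analysis on the orientations of the two chords $uv$ and $xy$. In each case the goal is to exhibit two internally vertex-disjoint directed paths with common ends, one of length at least $k$ and the other of length at least $\ell$. The key observation concerns the cyclic order along $C$, which is $u,\dots,x,\dots,v,\dots,y,\dots,u$. Because of this order, the two segments $u\rbjt{C}x$ and $v\rbjt{C}y$ are vertex-disjoint, so any path built from one segment together with chord arcs is internally disjoint from a path built from the other segment together with the remaining chord arcs.

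\textbf{Case 1: $(u,v),(x,y)\in A(D)$.} Take the two $(u,y)$-paths
\[
u\rbjt{C}x\odot (x,y) \quad\text{and}\quad (u,v)\odot v\rbjt{C}y .
\]
Their lengths are at least $(k-1)+1=k$ and $1+(\ell-1)=\ell$. They share only $u$ and $y$, so they form a member of $C(k,\ell)$.

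\textbf{Case 2: $(v,u),(y,x)\in A(D)$.} Symmetrically, take the two $(v,x)$-paths
\[
v\rbjt{C}y\odot (y,x) \quad\text{and}\quad (v,u)\odot u\rbjt{C}x .
\]
Their lengths are at least $\ell$ and $k$ respectively, which again gives a member of $C(k,\ell)$.

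\textbf{Case 3: $(u,v),(y,x)\in A(D)$.} Take the two $(u,x)$-paths
\[
u\rbjt{C}x \quad\text{and}\quad (u,v)\odot v\rbjt{C}y\odot (y,x).
\]
The second path has length at least $\ell+1$, and the two paths are internally disjoint. If $|u\rbjt{C}x|\ge k$, they form a member of $C(k,\ell)$. Otherwise $|u\rbjt{C}x|=k-1$, and this is exactly exception (a).

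\textbf{Case 4: $(v,u),(x,y)\in A(D)$.} Take the two $(v,y)$-paths
\[
v\rbjt{C}y \quad\text{and}\quad (v,u)\odot u\rbjt{C}x\odot(x,y).
\]
The second path has length at least $k+1$. If $|v\rbjt{C}y|\ge \ell$, we obtain a member of $C(k,\ell)$. Otherwise $|v\rbjt{C}y|=\ell-1$, and this is exactly exception (b).

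If $uv$ or $xy$ is a $2$-cycle (both arcs present), then Case 1 or Case 2 applies, since both needed arcs exist, and $D$ contains a member of $C(k,\ell)$. The exceptions therefore only arise when the arcs are oriented exactly as described in (a) or (b).

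There is no real obstacle here. The only points to check carefully are the internal disjointness, which follows from the cyclic order $u,x,v,y$ together with the distinctness of the four vertices, and that the segment lengths are counted in arcs, so that attaching a chord arc adds exactly one.
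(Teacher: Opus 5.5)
Your proof is correct. The four orientation cases are exhaustive, since every edge of $G$ carries at least one arc of $D$. The cyclic order $u,x,v,y$ on $C$, together with distinctness, makes $u\rbjt{C}x$ and $v\rbjt{C}y$ vertex-disjoint, so all your path pairs are internally disjoint. Cases 3 and 4 fail only when $|u\rbjt{C}x|=k-1$ or $|v\rbjt{C}y|=\ell-1$ respectively, which are exactly exceptions (a) and (b). The paper does not prove this lemma itself; it imports it from Kim, Kim, Ma and Park, so there is no proof in the paper to compare against. Your direct case analysis is a complete, self-contained verification.
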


\begin{lemma}\label{lem:no-b-plus-one-dichord}
Let $a\ge3$ and $b\ge2$ be integers. Suppose $n\ge a+b+1$. If $\delta(G)\ge a+b-1$ and $D$ is $C(a,b)$-free, then $C$ contains no $(b+1)$-dichord.
\end{lemma}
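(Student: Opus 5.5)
Suppose, for a contradiction, that $C$ has a $(b+1)$-dichord. By rotating the labels of $C$ we may assume it is $(v_0,v_{b+1})$. Let $S=\{v_0,v_1,\dots,v_{b+1}\}$ be the segment it spans. Since $n\ge a+b+1$, the complementary arc $v_{b+1}\rbjt{C}v_0$ contains at least $a-1$ vertices outside $S$. We have the two internally disjoint directed paths $v_0\rbjt{C}v_{b+1}$ (length $b+1$) and $v_0v_{b+1}$ (length $1$). The plan is to use the degree condition on the interior vertices $v_1,\dots,v_b$ to reroute one of these paths through the long arc $v_{b+1}\rbjt{C}v_0$. This should produce a configuration covered by Lemma~\ref{lem:common-start}, Lemma~\ref{lem:common-end}, Lemma~\ref{lem:rectangle} or Lemma~\ref{lem:boundary-crossing}.

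\textbf{Step 1: using $v_b$ and $v_1$.} Each interior vertex $v_i$ has at most $b+1$ neighbours in $S$, so it has at least $a-2\ge 1$ neighbours $v_j$ with $b+2\le j\le n-1$.
\begin{itemize}
\item If $v_b v_j\in E(G)$ with $j\ge a+b$, apply Lemma~\ref{lem:common-start} from $s=v_0$. The path $v_0v_{b+1}\rbjt{C}v_j$ has length $j-b\ge a$, the path $v_0\rbjt{C}v_b$ has length $b$, and the endpoints $v_j,v_b$ are adjacent. This gives a member of $C(a,b)$.
\item Symmetrically, if $v_1v_j\in E(G)$ with $j\le n-a+1$, apply Lemma~\ref{lem:common-end} with terminal vertex $t=v_{b+1}$. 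The path $v_j\rbjt{C}v_0v_{b+1}$ has length $n-j+1\ge a$ and the path $v_1\rbjt{C}v_{b+1}$ has length $b$.
\end{itemize}
Hence every outside neighbour of $v_b$ lies in the window $W^+=\{v_{b+2},\dots,v_{a+b-1}\}$. Every outside neighbour of $v_1$ lies in $W^-=\{v_{n-a+2},\dots,v_{n-1}\}$. Each window has exactly $a-2$ vertices. Since $v_b$ needs at least $a-2$ outside neighbours, counting forces tightness: $v_b$ is adjacent to all of $S\setminus\{v_b\}$ and to all of $W^+$. Likewise $v_1$ is adjacent to all of $S\setminus\{v_1\}$ and to all of $W^-$.

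\textbf{Step 2: the extremal configuration.} We now have many forced edges, in particular $v_1v_{b+1}$, $v_0v_b$, $v_bv_{b+2}$ and $v_1v_{n-1}$. The first thing to try is to exploit the crossing of $v_1v_{n-1}$ (or $v_0v_b$) with $v_bv_{a+b-1}$ via Lemma~\ref{lem:boundary-crossing}, choosing the pair so that the arc lengths are at least $a-1$ and $b-1$. If an exceptional case (a) or (b) arises, it fixes the orientation of these chords. That orientation can then be fed into Lemma~\ref{lem:rectangle}. For example, take $P_1$ to be a long arc of $C$ of length at least $a$ from $v_{b+2}$ around to $v_0$, and $P_2$ to be a directed path of length at least $b$ inside $S$, starting at $v_1$ and using the dichord or the forced chords. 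Their endpoints are joined by the forced edges $v_0v_b$, $v_1v_{n-1}$, and so on. If the interior vertices $v_2,\dots,v_{b-1}$ are needed, apply the same window argument to them: the shorter paths now available give analogous restrictions, and these should pin down enough further chords.

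\textbf{Main obstacle.} I expect the main difficulty to be Step 2 when $n$ is small, close to $a+b+1$. Then $W^+$ and $W^-$ overlap or nearly cover the complement of $S$, so the arcs in Lemma~\ref{lem:boundary-crossing} may be too short. In that regime I would count directly: the vertices of $W^+\cap W^-$ are adjacent to both $v_1$ and $v_b$. This gives two chords from a single outside vertex into $S$, which together with $(v_0,v_{b+1})$ should yield a common-start or common-end configuration of the right lengths.

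The hypotheses $a\ge 3$ (so that $a-2\ge 1$) and $b\ge 2$ (so that $v_1\ne v_b$) are exactly what make Step 1 non-vacuous.
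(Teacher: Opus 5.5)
Your Step 1 is correct and coincides with the first half of the paper's proof. Lemmas~\ref{lem:common-end} and \ref{lem:common-start} confine $N[v_1]$ to $V(v_{n-a+2}\rbjt{C}v_{b+1})$ and $N[v_b]$ to $V(v_0\rbjt{C}v_{a+b-1})$, and the degree bound forces equality in both.

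The gap is Step 2, which never produces a forbidden configuration. It is a list of options hedged with ``should'', and its one concrete suggestion does not apply. The $2$-chord $v_1v_{n-1}$ cuts off only $v_0$, so it crosses only chords incident to $v_0$; in particular it does not cross $v_bv_{a+b-1}$. The chord $v_0v_b$ shares the endpoint $v_b$ with $v_bv_{a+b-1}$. So Lemma~\ref{lem:boundary-crossing} has nothing to act on for either pair. The difficulty you anticipate for small $n$ is also a red herring, as are the windows for $v_2,\dots,v_{b-1}$ and the overlap $W^+\cap W^-$: the argument closes uniformly for all $n\ge a+b+1$.

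The missing idea is to orient the forced edge $v_bv_{a+b-1}$, the far end of your window $W^+$.
\begin{itemize}
\item If $(v_{a+b-1},v_b)\in A(D)$, then $(v_0,v_{b+1})\odot v_{b+1}\rbjt{C}v_{a+b-1}\odot(v_{a+b-1},v_b)$ (length $a$) and $v_0\rbjt{C}v_b$ (length $b$) already form a member of $C(a,b)$.
\item Hence $(v_b,v_{a+b-1})\in A(D)$. Now consider $v_1\rbjt{C}v_b\odot(v_b,v_{a+b-1})$ (length $b$) and $(v_{n-1},v_0)\odot(v_0,v_{b+1})\odot v_{b+1}\rbjt{C}v_{a+b-1}$ (length $a$). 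These are internally disjoint paths ending at $v_{a+b-1}$, with distinct initial vertices because $n-1\ge a+b$.
\item Lemma~\ref{lem:common-end} then gives $v_1v_{n-1}\notin E(G)$. But $a\ge 3$ places $v_{n-1}$ in $W^-=\{v_{n-a+2},\dots,v_{n-1}\}$, and Step 1 showed every vertex of $W^-$ is adjacent to $v_1$.
\end{itemize}
This short finish is exactly the paper's argument. Without it, your proposal stops at the extremal neighbourhoods of $v_1$ and $v_b$ and does not prove the lemma.
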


\begin{proof}
Without loss of generality, suppose to the contrary that  $(v_0,v_{b+1})\in A(D)$.
We first determine $N[v_1]$. 
Since $n\ge a+b+1$,  $n-a+1>b+1$. For $b+1<i\le n-a+1$, the paths $v_i\rbjt{C}v_{0}\odot(v_0,v_{b+1})$ and $v_1\rbjt{C}v_{b+1}$ have lengths at least $a$ and $b$, respectively. By Lemma~\ref{lem:common-end},  $v_iv_1\notin E(G)$. Hence $N[v_1]\subseteq V(v_{n-a+2}\rbjt{C}v_{b+1})$, where $|V(v_{n-a+2}\rbjt{C}v_{b+1})|=a+b$. Since $|N[v_1]|\ge a+b$, it follows that $N[v_1]=V(v_{n-a+2}\rbjt{C}v_{b+1})$.

Next, we determine  $N[v_b]$. For $a+b\le j\le n-1$, the paths $(v_0,v_{b+1})\odot v_{b+1}\rbjt{C}v_j$ and $v_0\rbjt{C}v_b$ have lengths at least $a$ and $b$, respectively. By Lemma~\ref{lem:common-start}, we have $v_jv_b\notin E(G)$. Hence $N[v_b]\subseteq V(v_0\rbjt{C}v_{a+b-1})$, where $|V(v_0\rbjt{C}v_{a+b-1})|=a+b$. Since $|N[v_b]|\ge a+b$, we have
$N[v_b]=V(v_0\rbjt{C}v_{a+b-1})$.
In particular, $v_bv_{a+b-1}\in E(G)$. If $(v_{a+b-1},v_b)\in A(D)$, then $(v_0,v_{b+1})\odot v_{b+1}\rbjt{C}v_{a+b-1}\odot(v_{a+b-1},v_b)$ and $v_0\rbjt{C}v_b$ form a member of $C(a,b)$, a contradiction. Therefore, $(v_b,v_{a+b-1})\in A(D)$.

Now the paths $v_1\rbjt{C}v_b \odot (v_b,v_{a+b-1})$ and $(v_{n-1},v_0) \odot (v_0,v_{b+1}) \odot v_{b+1}\rbjt{C}v_{a+b-1}$ have lengths $b$ and $a$, respectively. By Lemma~\ref{lem:common-end},  $v_{n-1}v_1\notin E(G)$. Since $a\ge3$,  $v_{n-1}\in V(v_{n-a+2}\rbjt{C}v_{b+1})=N[v_1]$, a contradiction.
\end{proof}

\section{\texorpdfstring{Minimum Degree and $C(k,1)$}
{Minimum Degree and C(k,1)}}
\label{sec-l1}

\begin{theorem}\label{thm:main-l1}
If $n\geq k+2$ and $\delta(G) \geq k\geq 5$, then $D$ contains a member of
$C(k,1)$.

\end{theorem}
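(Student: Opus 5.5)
We argue by contradiction: suppose $D$ is $C(k,1)$-free, where $n\ge k+2$ and $\delta(G)\ge k\ge 5$. A member of $C(k,1)$ is a directed path of length at least $k$ together with one arc joining its ends in the same direction, or two internally disjoint directed paths of lengths at least $k$ and $1$ with common ends. The basic forbidden configuration is therefore a dichord $(v_i,v_j)$ with $|v_i\rbjt{C}v_j|\ge k$: the arc and $v_i\rbjt{C}v_j$ form a member of $C(k,1)$. Hence every dichord is a $t$-dichord with $2\le t\le k-1$. Equivalently, every chord $v_iv_j$ with $|v_j\rbjt{C}v_i|\ge k$ must be oriented as $(v_i,v_j)$ with $|v_i\rbjt{C}v_j|\le k-1$.

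\textbf{Step 1: localise the neighbourhoods.} Since every dichord has length at most $k-1$ in the forward direction, each neighbour of $v_i$ lies among $v_{i-k+1},\dots,v_{i+k-1}$. That window has $2k-2$ vertices other than $v_i$, which is more than $k$, so degree alone does not finish the argument. I would next rule out long dichords using Lemma~\ref{lem:common-start} and Lemma~\ref{lem:common-end} with $\ell=1$, in the spirit of Lemma~\ref{lem:no-b-plus-one-dichord}. Take a $t$-dichord $(v_0,v_t)$. For any $v_i$ with $t<i$, the paths $(v_0,v_t)\odot v_t\rbjt{C}v_i$ and $v_0\rbjt{C}v_1$ share the initial vertex $v_0$. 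If the first path has length at least $k$, Lemma~\ref{lem:common-start} forbids the edge $v_1v_i$. This works because internal disjointness holds as long as $i$ does not wrap around to $v_1$. Symmetrically, Lemma~\ref{lem:common-end} constrains $N(v_{t-1})$. These constraints force $N[v_1]$ and $N[v_{t-1}]$ into short intervals of $C$. Since each such set has size at least $k+1$, both sets are pinned down exactly, just as in the proof of Lemma~\ref{lem:no-b-plus-one-dichord}.

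\textbf{Step 2: choose an extremal dichord and derive a contradiction.} I would take a dichord $(v_0,v_t)$ with $t$ maximal. Such a dichord exists: $\delta\ge k\ge 5$ forces chords, and every chord is oriented as a forward $t$-dichord with $t\le k-1$. Consider the vertex $v_1$, or the vertices inside the interval $v_0\rbjt{C}v_t$. By Step 1 they need $k$ neighbours, but by maximality of $t$ and the $(k,1)$ constraints their neighbours are confined to a window of about $k$ vertices. So the window must be complete, and in particular it contains a specific chord such as $v_1v_{k}$ or $v_{n-1}v_1$ (the analogue of the final step of Lemma~\ref{lem:no-b-plus-one-dichord}). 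Orienting that chord in either direction then gives either a directed path of length at least $k$ closed by an arc (a member of $C(k,1)$), or a configuration forbidden by Lemma~\ref{lem:common-start} or Lemma~\ref{lem:common-end}.

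\textbf{Main obstacle.} The hard part is the case analysis on $t$ relative to $k$. When $t$ is small, say $t=2$, every vertex has neighbours only at short forward distances. I would handle this case by a separate counting argument: if all dichords have length less than $k/2$ or so, some vertex has too few possible neighbours. One must also check that the complementary arcs $v_t\rbjt{C}v_0$ stay long enough, which is where $n\ge k+2$ enters. Finally, the small values $k=5,6$ need care, because windows like $v_{n-k+2}\rbjt{C}v_{t}$ may overlap; this is where $k\ge5$ should be used, mirroring the role of $a\ge3$ in Lemma~\ref{lem:no-b-plus-one-dichord}.
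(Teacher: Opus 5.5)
There is a genuine gap: for dichords of length $t\ge 3$ your proposal has no mechanism that produces a contradiction.

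\textbf{Why the maximal-$t$ route fails.} Step~2 rests on the claim that, by maximality of $t$, the neighbours of $v_1$ (or of the vertices inside $v_0\rbjt{C}v_t$) are confined to a window of about $k$ vertices. Maximality only says that every neighbour of $v_i$ lies within cyclic distance $t$. That is a window of $2t$ vertices, and $t$ may be as large as $k-1$; your counting remark only disposes of $t<k/2$.

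The constraints from Lemmas~\ref{lem:common-start} and~\ref{lem:common-end} applied to $(v_0,v_t)$ do not help when $t$ is large either. They remove from $N(v_1)$ only the vertices $v_i$ with $i\ge k+t-1$ or $t<i\le n-k+1$. Take $k=5$, $n=7$ and a dichord $(v_0,v_4)$, which is automatically a longest dichord. Both ranges are empty, and the same two lemmas anchored at $(v_0,v_4)$ exclude nothing from $N(v_2)$ or $N(v_3)$ either. So nothing gets ``pinned down'', and no chord such as $v_1v_k$ or $v_{n-1}v_1$ is forced to exist and be oriented.

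\textbf{What does work.} The analogy with Lemma~\ref{lem:no-b-plus-one-dichord} only works when the dichord has length $\ell+1$, i.e.\ $t=2$ here. Then Lemma~\ref{lem:common-start} removes $v_{k+1},\dots,v_{n-1}$ and Lemma~\ref{lem:common-end} removes $v_3,\dots,v_{n-k+1}$ from $N(v_1)$. Writing $n=k+s$, this leaves at most $k-s+1<k$ neighbours. That part of your Step~1 is correct and matches the paper's $t=2$ case.

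\textbf{The missing idea.} Take the dichord $(v_0,v_t)$ of \emph{minimum} length, so that no chord is oriented forward over fewer than $t$ steps.

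If $t>s$, every chord at $v_0$ is either $(v_0,v_i)$ with $t\le i\le k-1$, or $(v_i,v_0)$ with $s+1=n-k+1\le i\le n-t\le k-1$. All these $i$ lie in $\{s+1,\dots,k-1\}$, so $d(v_0)\le k-s+1<k$.

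If $3\le t\le s$, consider chords at $v_1$. A chord $v_1v_i$ with $3\le i\le t$ must be $(v_i,v_1)$ by minimality, and this is a dichord of length $n-i+1\ge k+1$. A chord $v_1v_i$ with $i\ge k+t-2$ is either a forward dichord of length $i-1\ge k$, or it closes the path $(v_0,v_t)\odot v_t\rbjt{C}v_i\odot(v_i,v_1)$ of length $i-t+2\ge k$ against the arc $(v_0,v_1)$. Hence $N(v_1)\subseteq\{v_0,v_2\}\cup\{v_{t+1},\dots,v_{k+t-3}\}$ and $d(v_1)\le k-1$.

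Together with the case $t=2$, this finishes the proof. It is minimality, not maximality, that turns ``all dichords are short'' into a degree deficit.
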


\begin{proof}  
Assume that  $n=k+s$  with $s\ge 2$. 
Suppose to the contrary that $D$ is $C(k,1)$-free. 
Since $\delta(G)\ge 5$, $C$ has a chord. 
Choose a dichord $(v_i,v_j)$ such that $|v_i\rbjt{C}v_j|$  is as small as possible. By symmetry, assume $(v_0,v_{t})$ is such a $t$-dichord. Since $D$ is $C(k,1)$-free, we have $2\le t\le k-1$. 

Suppose $t>s$. For any $v_i\in N(v_0)\setminus \{v_1,v_{n-1}\}$, by the choice of $(v_0,v_t)$, if $(v_0,v_i)\in A(D)$, then $t\le i\le k-1$, and if $(v_i,v_0)\in A(D)$, then $t\le n-i\le k-1$, that is, $n-k+1 \le i \le n-t$.
 Since $n=k+s$ and $t>s$, $s+1\leq i\le k+s-t\leq k-1$.
 Combining these two cases, $N(v_0)\subseteq
\{v_1,v_{n-1}\}\cup \{v_{s+1},\ldots,v_{k-1}\}$,
and so $d(v_0)\le 2+(k-s-1)=k-s+1<k$,
a contradiction. Hence, $t\le s$.

Suppose $t\ge 3$. We claim that $N(v_1)\subseteq \{v_0,v_2\}\cup \{v_{t+1},\ldots,v_{k+t-3}\}$.
Suppose $v_i\in N(v_1)$. If $3\le i\le t$, then by the choice of $(v_0,v_t)$,  $(v_i,v_1)\in A(D)$. Since $i\leq t\leq s$, 
$v_i\rbjt{C}v_1\cup (v_i,v_1)$ is a member of $C(k,1)$. 
If $k+t-2\le i\le n-1$, then either $v_1\rbjt{C}v_i\cup (v_1,v_i)$ or $(v_0,v_{t})\odot v_{t}\rbjt{C}v_i\odot (v_i,v_1)\cup (v_0,v_1)$ is a member of $C(k,1)$. Therefore, $N(v_1)\subseteq \{v_0,v_2\}\cup \{v_{t+1},\ldots,v_{k+t-3}\}$ and $d(v_1)\le 2+(k-3)=k-1$, a contradiction.

Let $t=2$, that is, $(v_0,v_2)\in A(D)$. We show that $N(v_1)\subseteq \{v_0,v_2\}\cup \{v_4,\ldots,v_{k}\}$.
If $v_3\in N(v_1)$, then either $v_3\rbjt{C}v_1\cup (v_3,v_1)$  or $(v_1,v_3)\odot v_3\rbjt{C}v_0\odot(v_0,v_2)\cup (v_1,v_2)$ is a member of $C(k,1)$.
If $k+1\le i\le n-1$, since $|(v_0,v_2)\odot v_2\rbjt{C}v_i|\ge k$ and $(v_0,v_1)\in A(D)$, by Lemma \ref{lem:common-start},  $v_i\notin N(v_1)$. Therefore, $d(v_1)\le 2+(k-3)=k-1$, a contradiction.
\end{proof}

\section{\texorpdfstring{Minimum Degree and $C(k,2)$}
{Minimum Degree and C(k,2)}}\label{sec-l2}

\begin{theorem}\label{thm:L2} Suppose $n\ge k+3\ge 7$.
If $\delta(G) \geq k+1$, then $D$  contains a member of $C(k,2)$. 
\end{theorem}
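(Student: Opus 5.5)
We argue by contradiction, following the template of the proof of Theorem~\ref{thm:main-l1}: suppose $D$ is $C(k,2)$-free, with $n=k+s$, $s\ge 3$, and $\delta(G)\ge k+1\ge 5$, so $C$ has chords. Lemma~\ref{lem:no-b-plus-one-dichord} with $a=k\ge 4$, $b=2$ applies, since $n\ge k+3$ and $\delta(G)\ge k+1=a+b-1$. It shows that $C$ contains no $3$-dichord.

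\textbf{Step 1: a shortest dichord of length $t\in\{2\}\cup[4,k-1]$.} We choose a dichord $(v_i,v_j)$ with $|v_i\rbjt{C}v_j|$ minimal and relabel so that it is $(v_0,v_t)$. Its reverse path $v_t\rbjt{C}v_0$ has length $n-t$, and this path together with the arc gives a member of $C(k,1)$. We need a member of $C(k,2)$ instead, so the right bound must come from the two-sided structure. For example, if $t\ge 2$, the paths $v_0\rbjt{C}v_t$ (length $t\ge 2$) and $(v_0,v_t)$ are not useful directly. Instead we use Lemmas~\ref{lem:common-start} and~\ref{lem:common-end} around $v_1$ and $v_{t-1}$, as in Theorem~\ref{thm:main-l1}. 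By minimality, every out-arc at $v_0$ has $C$-length at least $t$, and every in-arc at $v_0$ comes from a vertex $v_i$ with $n-i\ge t$.

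\textbf{Step 2: bound $d(v_0)$ when $t$ is large.} We count $N(v_0)$ by the same argument as in the $t>s$ case of Theorem~\ref{thm:main-l1}. A chord $v_0v_i$ oriented either way creates a cycle consisting of that arc and a long $C$-segment. We pair it with a second path of length $\ge 2$, obtained by using $(v_0,v_t)$ to jump, to form a member of $C(k,2)$. This excludes all $i$ outside an interval of size about $k-s$, which contradicts $d(v_0)\ge k+1$ once $t>s-1$ or so. We then reduce to the case where $t$ is small.

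\textbf{Step 3: analyze the neighborhoods of $v_1$ (and symmetrically $v_{t-1}$) when $t$ is small.} We show that
$$N(v_1)\subseteq\{v_0,v_2\}\cup\{v_{t+1},\dots,v_{k+t-2}\}.$$
A neighbor $v_i$ with $2<i\le t$ gives a shorter dichord, or else a reverse arc forming a cycle with a long segment. A far neighbor $v_i$ with $i\ge k+t-1$ yields a member of $C(k,2)$ via Lemma~\ref{lem:common-start}, applied with the paths $(v_0,v_t)\odot v_t\rbjt{C}v_i$ and $v_0\rbjt{C}v_1$ extended by one step. Note that the second path needs length $2$ rather than $1$, so we extend through $v_{n-1}$ using Lemma~\ref{lem:common-end}. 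This bound gives $d(v_1)\le k$, a contradiction.

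\textbf{Main obstacle: the case $t=2$.} With $(v_0,v_2)$ present, the short block of length $2$ must be built from two arcs. The near neighbors $v_3$ and $v_{n-1}$ of $v_1$ need separate orientation analysis, using Lemma~\ref{lem:boundary-crossing} with $\ell=2$. The exceptional configurations (a) and (b) of that lemma have to be handled by examining a second vertex, $v_2$ or $v_{n-1}$, whose neighborhood is then forced to be an interval of exactly $k+2$ consecutive vertices. The contradiction should then come, as in Lemma~\ref{lem:no-b-plus-one-dichord}, from forcing an arc at the end of that interval that closes a member of $C(k,2)$.
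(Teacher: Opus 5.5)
\textbf{Gap: every exclusion is borrowed from the $\ell=1$ proof.} Your outline does not go through. In Theorem~\ref{thm:main-l1}, one chord plus a $C$-segment of length at least $k$ already closes the forbidden configuration. For $C(k,2)$ the short side must have length at least $2$, so a single chord is never enough, and each of your Steps~1--3 breaks at this point.

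\begin{itemize}
\item \emph{Step~1.} The range $t\le k-1$ is asserted without proof. For $\ell=1$ it came from $v_0\rbjt{C}v_t$ together with $(v_0,v_t)$, which is only a $C(k,1)$.
\item \emph{Step~2.} Minimality of $t$ gives only $i\ge t$ for an out-chord $(v_0,v_i)$ and $n-i\ge t$ for an in-chord $(v_i,v_0)$. The companion bounds $i\le k-1$ and $n-i\le k-1$ came from $C(k,1)$-freeness and disappear. Indeed, for $t\le k-1$, the cycle $C$ plus $(v_0,v_t)$ plus any one further chord at $v_0$ allowed by minimality contains no member of $C(k,2)$. So no window of size about $k-s$ can come out of this.
\item \emph{Step~3.} Your far-neighbour exclusion works only for the orientation $(v_1,v_i)$, via $(v_0,v_1)\odot(v_1,v_i)$ against $(v_0,v_t)\odot v_t\rbjt{C}v_i$.
\begin{itemize}
\item For a far in-arc $(v_i,v_1)$, the natural pair $(v_i,v_1)\odot v_1\rbjt{C}v_t$ and $v_i\rbjt{C}v_0\odot(v_0,v_t)$ has lengths $t$ and $n-i+1$. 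It says nothing for $n-k+2\le i\le n-t+1$.
\item ``Extending through $v_{n-1}$'' does not repair this. Prepending $v_{n-1}$ to $v_0v_1$ forces the long side to start at $v_{n-1}$ too, so both sides pass through $v_0$.
\item Worse, an in-arc $(v_i,v_1)$ with $3\le i\le t$ has forward length $n-i+1$, which minimality does not forbid when $t$ is small. Together with $v_i\rbjt{C}v_1$ it gives only a $C(\cdot,1)$.
\end{itemize}
So $d(v_1)\le k$ does not follow.
\item The case $t=2$, which you flag yourself as the main obstacle, is left unresolved.
\end{itemize}

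\textbf{The missing idea: short anti-dichords.} The real danger lies in short anti-dichords such as $(v_3,v_0)$. As a forward dichord it has length $n-3$, so choosing a shortest dichord gives no control over it, yet excluding it is the heart of the proof.

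The paper proves (Claim~\ref{lem:L24}), by a simultaneous induction on $s$, that $C$ has no $(3+s)$-dichord when $n\ge k+3+s$ and no anti-$(3+s)$-dichord when $n\ge k+4+s$. Its mechanism is as follows.
\begin{itemize}
\item An anti-dichord $(v_{3+s},v_0)$ closes the short directed cycle $v_0v_1\cdots v_{3+s}v_0$. This cycle supplies short sides of length at least $2$, namely $(v_{3+s},v_0)\odot(v_0,v_1)$ and $v_1\rbjt{C}v_{3+s}\odot(v_{3+s},v_0)$.
\item Lemmas~\ref{lem:common-start} and~\ref{lem:common-end} then confine $N[v_1]$ to the intersection of two windows of $C$.
\item The induction hypothesis, applied to both kinds of dichords, removes the remaining short chords.
\end{itemize}

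With this claim, $n\ge k+5$ falls to a degree count at $v_0$. For $n=k+3$ and $n=k+4$, counting alone cannot succeed: $E(\overline{G})$ is then a matching, respectively exactly the set of pairs at distance $3$ on $C$. These cases need separate orientation arguments: Lemma~\ref{lem:boundary-crossing} on two crossing $2$-chords for $n=k+3$, and the orientations of $v_0v_4$ and $v_1v_{n-1}$ for $n=k+4$. Your outline contains neither ingredient.
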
 
\begin{proof}  
Suppose to the contrary  that $D$ is $C(k,2)$-free. 
We first prove the following claim.

\begin{claim}\label{lem:L24}
For every $s\geq 0$, $C$ contains no $(3+s)$-dichord when $n\geq k+3+s$ and no anti-$(3+s)$-dichord when $n\geq k+4+s$.
\end{claim}

\begin{proof} We use induction on $s$. Suppose  $s=0$. 
 The assertion for $3$-dichords follows immediately from Lemma~\ref{lem:no-b-plus-one-dichord} with $(a,b)=(k,2)$. Now we  suppose  $n\ge k+4$, and by cyclic symmetry,  suppose  $(v_3,v_0)\in A(D)$.

We first determine $N[v_t]$ for $t=1,2$.
Since $n \geq k+4$, for $k+3\leq i \leq n-1$, note that $v_3\rbjt{C}v_i$ and $(v_3,v_0) \odot v_0\rbjt{C}v_t$ have lengths at least $k$ and $2$, by Lemma \ref{lem:common-start}, we have $v_tv_i \notin E(G)$. Thus, $N[v_t] \subseteq V(v_0\rbjt{C}v_{k+2})$.
Since $n-k>3$, for $3<j \leq n-k$, note that $v_j\rbjt{C}v_0$ and $v_t\rbjt{C}v_3 \odot (v_3,v_0)$ have lengths at least $k$ and $2$, respectively, by Lemma \ref{lem:common-end}, we have $v_tv_j \notin E(G)$, and thus $N[v_t] \subseteq V(v_{n-k+1}\rbjt{C}v_{3})$.
Therefore,  $N[v_t] \subseteq V(v_{n-k+1}\rbjt{C}v_{3}) \cap V(v_0\rbjt{C}v_{k+2})$. 

If $k+2 \leq n-k+1$, then  $N[v_t]  \subseteq  V(v_0\rbjt{C}v_3)\cup \{v_{k+2}\}$, which contradicts $|N[v_t]|\ge 6$.
If $k+2 > n-k+1$, then  
$N[v_t]  \subseteq  V(v_0\rbjt{C}v_3) \cup V(v_{n-k+1}\rbjt{C}v_{k+2})$, and 
$|V(v_0\rbjt{C}v_3) \cup V(v_{n-k+1}\rbjt{C}v_{k+2})|=4+(k+2)-(n-k+1)+1=2k-n+6$.
Thus, if 
$n \geq k+5$, then we have
$2k-n+6 \leq k+1 <|N[v_t]|$, a contradiction. Hence $n=k+4$ and $2k-n+6=k+2$.
Since $|N[v_t]|\ge k+2$, we can deduce
$N[v_t]=V(v_0\rbjt{C}v_3) \cup V(v_{n-k+1}\rbjt{C}v_{k+2})$ for $t=1,2$.

In particular, $v_{n-k+1}v_2 \in E(G)$. As shown in Figure \ref{fig:1}:
If $(v_2,v_{n-k+1}) \in A(D)$, then $(v_2,v_{n-k+1}) \odot  v_{n-k+1}\rbjt{C}v_{0} $ and $(v_2,v_3) \odot (v_3,v_0)$ form a member of $C(k,2)$, a contradiction.
If $(v_{n-k+1},$ $v_2) \in A(D)$, then $v_{n-k+1}\rbjt{C}v_1$ and $(v_{n-k+1},v_2) \odot (v_2,v_3)$ have lengths $k$ and $2$, respectively. By Lemma \ref{lem:common-start}, we have $v_{1}v_3 \notin E(G)$, which contradicts $v_3\in N[v_1]$. 

\begin{figure}[htbp]
\centering

% ---------- anti-3-dichord ----------
\begin{subfigure}[b]{0.32\textwidth}
\centering
\begin{tikzpicture}[
    scale=0.9,
    every node/.style={
        font=\normalfont\normalsize,
        inner sep=0pt
    },
    vertex/.style={
        circle,
        draw=black,
        fill=white,
        line width=0.75pt,
        inner sep=1.5pt
    },
    edge/.style={line width=0.75pt},
    rededge/.style={line width=0.85pt, red},
    grayedge/.style={line width=0.8pt, gray!70},
    cwmark/.style={
        line width=0.75pt,
        -{Stealth[length=1.8mm,width=1.3mm]}
    },
    midarrow/.style={
        postaction={decorate},
        decoration={
            markings,
            mark=at position 0.5 with {
                \arrow{Stealth[length=2.2mm,width=1.6mm]}
            }
        }
    }
]

% fixed bounding box
\path[use as bounding box]
    (-2.42,-2.35) rectangle (2.62,2.35);

\def\R{1.75}

% vertices
\coordinate (v1) at (95:\R);
\coordinate (v2) at (55:\R);
\coordinate (v3) at (15:\R);
\coordinate (vnk1) at (-55:\R);
\coordinate (vk2) at (-125:\R);
\coordinate (v0) at (140:\R);

% circle
\draw[edge] (0,0) circle (\R);

% chords
\draw[rededge] (v1) -- (v3);
\draw[grayedge] (v2) -- (vnk1);

% midpoint arrow from v3 to v0
\draw[grayedge,midarrow] (v3) -- (v0);

% vertices
\node[vertex] at (v1) {};
\node[vertex] at (v2) {};
\node[vertex] at (v3) {};
\node[vertex] at (vnk1) {};
\node[vertex] at (vk2) {};
\node[vertex] at (v0) {};

% clockwise direction marks, drawn on top
\draw[cwmark]
    (128:\R)
    arc[start angle=128, end angle=110, radius=\R];

\draw[cwmark]
    (-80:\R)
    arc[start angle=-80, end angle=-98, radius=\R];

% labels
\node[above=2.6pt] at (v1) {$v_1$};
\node[above right=1.6pt] at (v2) {$v_2$};
\node[right=2.6pt] at (v3) {$v_3$};
\node[below right=2.5pt] at (vnk1) {$v_{n-k+1}$};
\node[below left=2pt] at (vk2) {$v_{k+2}$};
\node[above left=2.5pt] at (v0) {$v_0$};

\end{tikzpicture}
\caption{anti-3-dichord}
\label{fig:1}
\end{subfigure}
\hfill
% ---------- positive induction ----------
\begin{subfigure}[b]{0.32\textwidth}
\centering
\begin{tikzpicture}[
    scale=0.9,
    every node/.style={
        font=\normalfont\normalsize,
        inner sep=0pt
    },
    vertex/.style={
        circle,
        draw=black,
        fill=white,
        line width=0.75pt,
        inner sep=1.5pt
    },
    edge/.style={line width=0.75pt},
    rededge/.style={line width=0.85pt, red},
    grayedge/.style={line width=0.8pt, gray!70},
    cwmark/.style={
        line width=0.75pt,
        -{Stealth[length=1.8mm,width=1.3mm]}
    },
    midarrow/.style={
        postaction={decorate},
        decoration={
            markings,
            mark=at position 0.5 with {
                \arrow{Stealth[length=2.2mm,width=1.6mm]}
            }
        }
    }
]

% fixed bounding box
\path[use as bounding box]
    (-2.42,-2.35) rectangle (2.62,2.35);

\def\R{1.75}

% vertices
\coordinate (v0) at (138:\R);
\coordinate (vs) at (72:\R);
\coordinate (v2s) at (36:\R);
\coordinate (v3s) at (25:\R);
\coordinate (v5s) at (-35:\R);
\coordinate (vk1s) at (-128:\R);

% circle
\draw[edge] (0,0) circle (\R);

% chords
% midpoint arrow from v0 to v_{3+s}
\draw[grayedge,midarrow] (v0) -- (v3s);

\draw[rededge] (v2s) -- (v5s);

% vertices
\node[vertex] at (v0) {};
\node[vertex] at (vs) {};
\node[vertex] at (v2s) {};
\node[vertex] at (v3s) {};
\node[vertex] at (v5s) {};
\node[vertex] at (vk1s) {};

% clockwise direction marks, drawn on top
\draw[cwmark]
    (118:\R)
    arc[start angle=118, end angle=100, radius=\R];

\draw[cwmark]
    (-78:\R)
    arc[start angle=-78, end angle=-96, radius=\R];

% labels
\node[above left=2.5pt] at (v0) {$v_0$};
\node[above=2.5pt] at (vs) {$v_s$};
\node[above right=2pt] at (v2s) {$v_{2+s}$};
\node[right=2.6pt] at (v3s) {$v_{3+s}$};
\node[below right=2.5pt] at (v5s) {$v_{5+s}$};
\node[below left=2pt] at (vk1s) {$v_{k+1+s}$};

\end{tikzpicture}
\caption{positive induction}
\label{fig:positive}
\end{subfigure}
\hfill
% ---------- negative induction ----------
\begin{subfigure}[b]{0.32\textwidth}
\centering
\begin{tikzpicture}[
    scale=0.9,
    every node/.style={
        font=\normalfont\normalsize,
        inner sep=0pt
    },
   vertex/.style={
        circle,
        draw=black,
        fill=white,
        line width=0.75pt,
        inner sep=1.5pt
    },
    edge/.style={line width=0.75pt},
    rededge/.style={line width=0.85pt, red},
    grayedge/.style={line width=0.8pt, gray!70},
    cwmark/.style={
        line width=0.75pt,
        -{Stealth[length=1.8mm,width=1.3mm]}
    },
    midarrow/.style={
        postaction={decorate},
        decoration={
            markings,
            mark=at position 0.5 with {
                \arrow{Stealth[length=2.2mm,width=1.6mm]}
            }
        }
    }
]

% fixed bounding box
\path[use as bounding box]
    (-2.42,-2.35) rectangle (2.62,2.35);

\def\R{1.75}

% vertices
\coordinate (v0) at (138:\R);
\coordinate (v1) at (118:\R);
\coordinate (v3) at (88:\R);
\coordinate (v3s) at (25:\R);
\coordinate (vnk1) at (-65:\R);
\coordinate (vk2s) at (-140:\R);

% circle
\draw[edge] (0,0) circle (\R);

% chords
% midpoint arrow from v_{3+s} to v0
\draw[grayedge,midarrow] (v3s) -- (v0);

\draw[rededge] (v1) -- (vk2s);

% vertices
\node[vertex] at (v0) {};
\node[vertex] at (v1) {};
\node[vertex] at (v3) {};
\node[vertex] at (v3s) {};
\node[vertex] at (vnk1) {};
\node[vertex] at (vk2s) {};

% clockwise direction marks, drawn on top
\draw[cwmark]
    (74:\R)
    arc[start angle=74, end angle=56, radius=\R];

\draw[cwmark]
    (-92:\R)
    arc[start angle=-92, end angle=-110, radius=\R];

% labels
\node[above left=2.5pt] at (v0) {$v_0$};
\node[above left=2.5pt] at (v1) {$v_1$};
\node[above=2.5pt] at (v3) {$v_3$};
\node[right=2.5pt] at (v3s) {$v_{3+s}$};
\node[below right=2.5pt] at (vnk1) {$v_{n-k+1}$};
\node[below left=2pt] at (vk2s) {$v_{k+2+s}$};

\end{tikzpicture}
\caption{negative induction}
\label{fig:negative}
\end{subfigure}

\caption{Illustrations for Claim~\ref{lem:L24}}
\label{fig:induction}
\end{figure}
 
Assume that $s\ge 1$ and the result holds for all smaller values $s$.

Suppose that $n\geq k+3+s$ and $(v_0,v_{3+s})\in A(D)$ as shown in Figure~\ref{fig:positive}. Consider $N[v_{2+s}]$.
For $k+2+s\le j \le n-1$, note that $(v_0,v_{3+s})\odot v_{3+s}\rbjt{C}v_j$ and $v_0\rbjt{C}v_{2+s}$ have lengths at least $ k$ and $2$, respectively, by Lemma \ref{lem:common-start}, we have $v_jv_{2+s}\notin E(G)$.
Hence $N[v_{2+s}]\subseteq V(v_0\rbjt{C}v_{k+1+s})$.
By induction hypothesis, for any $0\le i\le s-1$, $v_iv_{2+s} \notin E(G)$. Moreover, note that $v_{5+s}\in V(v_0\rbjt{C}v_{k+1+s})$ and $v_{2+s}v_{5+s}$ is a $3$-chord, by the induction basis, $v_{2+s}v_{5+s} \notin E(G)$.
Therefore, $N[v_{2+s}]\subseteq V(v_s\rbjt{C}v_{k+1+s})\setminus\{v_{5+s}\}$,
contradicting $|N[v_{2+s}]|\ge k+2$.

Suppose $n\ge k+4+s$ and $(v_{3+s},v_0)\in A(D)$ as shown in Figure~\ref{fig:negative}.
Consider $N[v_1]$.  For $k+3+s \le i\le n-1$, note that $v_{3+s}\rbjt{C}v_i$ and $(v_{3+s},v_0)\odot (v_0,v_1)$ have lengths at least $k$ and $2$, respectively, by Lemma \ref{lem:common-start}, $v_1v_i\notin E(G)$.
Thus $N[v_1]\subseteq V(v_0\rbjt{C}v_{k+2+s})$.
Since $n\ge k+4+s$, we have $n-k>3+s$.  For $3+s<j\le n-k$, note that $v_j\rbjt{C}v_0$ and $v_1\rbjt{C}v_{3+s}\odot(v_{3+s},v_0)$ have lengths  at least $k$ and $2$, respectively, by Lemma \ref{lem:common-end}, $v_1v_j\notin E(G)$. By induction hypothesis, for any $v_i$ with $4\le i\le 3+s$, $v_1v_i \notin E(G)$.
Thus $N[v_1]\subseteq V(v_{n-k+1}\rbjt{C}v_{3})$. Consequently $N[v_1]\subseteq V(v_{n-k+1}\rbjt{C}v_{3})\cap V(v_0\rbjt{C}v_{k+2+s})$. 

If $k+2+s\le n-k+1$, then $N[v_1]\subseteq V(v_0\rbjt{C}v_3) \cup \{v_{k+2+s}\}$, contradicting $|N[v_1]|\ge k+2 \ge 6$.
If $k+2+s>n-k+1$, then $N[v_1]\subseteq V(v_0\rbjt{C}v_3) \cup V(v_{n-k+1}\rbjt{C}v_{k+2+s})$, and
$|V(v_0\rbjt{C}v_3)\cup V(v_{n-k+1}\rbjt{C}v_{k+2+s})| = 4+(k+2+s)-(n-k+1)+1 = 2k-n+6+s$.
Since $n\ge k+4+s$, $2k-n+6+s \le k+2$, with equality only if $n=k+4+s$. Thus note that $|N[v_1]|\ge k+2$, we have $n=k+4+s$ and $N[v_1]=V(v_0\rbjt{C}v_3)\cup V(v_{n-k+1}\rbjt{C}v_{k+2+s})$. In particular, $v_{k+2+s}v_{1}\in E(G)$.  Since $n=k+4+s$, we have $k+2+s=n-2$, and so $v_{n-2}v_{1}$ is a $3$-chord, contradicting the induction basis.
\end{proof}

If $n\ge k+5$, let $n=k+4+t$ with $t\ge1$.
By Claim \ref{lem:L24}, $v_0$ is nonadjacent to $v_3,v_4,\ldots,v_{t+3},v_{n-4}, v_{n-3}$.
These $t+3$ vertices are distinct since $n-4=k+t>t+3$. Consequently $d(v_0)\le(n-1)-(t+3)=k$,
contradicting $\delta(G)\ge k+1$.  

\vskip 1mm

Let $n=k+3$. Since $\delta(G) \ge k+1$, $E(\overline{G})$ is a matching. We first claim that there exist successive four vertices, say $v_0, v_1,v_2,v_3$, such that $v_0v_2,v_1v_3 \in E(G)$. Without loss of generality, suppose $v_0v_2\notin E(G)$. Since $E(\overline{G})$ is a matching, $v_2v_4\in E(G)$. If $v_3v_5\in E(G)$, then $v_2,v_3,v_4,v_5$ are required. If $v_3v_5\notin E(G)$, then $v_1v_3\in E(G)$. And $v_1,v_2,v_3,v_4$ are required. Hence there exist successive four vertices, say $v_0, v_1,v_2,v_3$, such that $v_0v_2,v_1v_3 \in E(G)$. Applying Lemma \ref{lem:boundary-crossing} on $(x,v,y,u)=(v_0,v_1,v_2,v_3)$, we get $(v_0,v_2), (v_1,v_3)\in A(D)$. 
If $(v_{1},v_{k+1}) \in A(D)$, then $(v_0,v_2) \odot v_2\rbjt{C}v_{k+1}$ and $(v_0,v_1) \odot (v_{1},v_{k+1})$ form a member of $C(k,2)$, and if $(v_{k+1},v_{1}) \in A(D)$, then this arc is a 3-dichord, contradicting Claim \ref{lem:L24}. Thus $v_1v_{k+1} \notin E(G)$.
If $(v_{4},v_{1}) \in A(D)$, then $v_4\rbjt{C}v_{0}\odot (v_0,v_2)$ and $(v_4,v_1) \odot (v_{1},v_{2})$ form a member of $C(k,2)$, and if $(v_{1},v_{4})\in A(D)$, then this arc is a 3-dichord, contradicting Claim \ref{lem:L24}. Thus $v_1v_{4} \notin E(G)$. Therefore, $|N[v_{1}]| \leq n-2=k+1$, a contradiction. 

\vskip 1mm

Let $n=k+4$.  By Claim \ref{lem:L24}, there are no $3$-chords. So $N[v_i]=V(D)\setminus\{v_{i-3},v_{i+3}\}$.
In particular, $N[v_0]=V(D)\setminus\{v_3,v_{n-3}\}$ and $N[v_1]=V(D)\setminus\{v_4,v_{n-2}\}$, so $v_0v_4,v_1v_{n-1} \in E(G)$. By Claim \ref{lem:L24}, there are no $4$-dichords, and hence $(v_4,v_0) \in A(D)$.  
If $(v_{n-1},v_1) \in A(D)$, then
$v_4\rbjt{C}v_{n-1}\odot (v_{n-1},v_1)$ and $(v_4,v_0)\odot (v_0,v_1)$
form a member of $C(k,2)$, a contradiction. 
If $(v_{1},v_{n-1}) \in A(D)$, then
$v_2\rbjt{C}v_{n-1}$ and $(v_0,v_1)\odot (v_1,v_{n-1})$ have lengths $k+1$ and $2$, respectively. By Lemma \ref{lem:common-end}, $v_0v_2\notin E(G)$, contradicting $N[v_0]=V(D)\setminus\{v_3,v_{n-3}\}$.
\end{proof}

\section{\texorpdfstring{Minimum Degree and $C(k,\ell)$}
{Minimum Degree and C(k,l)} for \texorpdfstring{$\ell \geq 3$}{l>=3}}\label{sec-l3}

In this section, let $k,\ell$ be two positive integers with $k, \ell\ge 3$. When describing $N[v_i]$ for some $v_i \in V(D)$, the subscripts are taken modulo $n$. We will prove the following two theorems.

\begin{theorem}\label{thm:Ll+1}
Let  $k \geq \ell+1 \geq 4$ and $n\ge k+\ell+1$. If $\delta(G) \geq k+\ell-1$, then $D$ contains a member of $C(k,\ell)$.
\end{theorem}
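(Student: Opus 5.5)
We argue by contradiction, assuming $D$ is $C(k,\ell)$-free, and follow the template of the proof of Theorem~\ref{thm:L2}. The first step is a dichord-exclusion claim analogous to Claim~\ref{lem:L24}. Lemma~\ref{lem:no-b-plus-one-dichord} with $(a,b)=(k,\ell)$ applies, since $k\ge 4\ge 3$, $\ell\ge 3\ge 2$ and $n\ge k+\ell+1$. It shows that $C$ has no $(\ell+1)$-dichord. We then prove by induction on $s\ge 0$ that $C$ has no $(\ell+1+s)$-dichord when $n\ge k+\ell+1+s$, and no anti-$(\ell+1+s)$-dichord when $n\ge k+\ell+2+s$ (possibly with a slightly different threshold, to be fixed during the computation).

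\emph{Forward dichords.} Suppose $(v_0,v_{\ell+1+s})\in A(D)$ and look at $N[v_{\ell+s}]$. Lemma~\ref{lem:common-start}, applied to $(v_0,v_{\ell+1+s})\odot v_{\ell+1+s}\rbjt{C}v_j$ and $v_0\rbjt{C}v_{\ell+s}$, gives $N[v_{\ell+s}]\subseteq V(v_0\rbjt{C}v_{k+\ell+s-1})$, a set of $k+\ell+s$ vertices. The induction hypothesis removes $v_0,\dots,v_{s-1}$, which are the endpoints of longer chords at $v_{\ell+s}$. The base case removes $v_{2\ell+1+s}$ if it lies in the window, which holds because $k\ge\ell+1$. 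This leaves at most $k+\ell-1$ vertices, contradicting $|N[v_{\ell+s}]|\ge k+\ell$.

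\emph{Anti-dichords.} Suppose $(v_{\ell+1+s},v_0)\in A(D)$ and consider $N[v_t]$ for $1\le t\le \ell$. Lemma~\ref{lem:common-start}, applied to $v_{\ell+1+s}\rbjt{C}v_i$ and $(v_{\ell+1+s},v_0)\odot v_0\rbjt{C}v_t$, confines $N[v_t]$ to a forward window. Lemma~\ref{lem:common-end}, applied to $v_j\rbjt{C}v_0$ and $v_t\rbjt{C}v_{\ell+1+s}\odot(v_{\ell+1+s},v_0)$, confines it to a backward window. The second path has length at least $\ell$ only when $t$ is small relative to $s$, so the admissible range of $t$ needs some care. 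Intersecting the two windows gives the small arc around $v_0\ldots v_{\ell+1+s}$ plus an overlap of size about $2k+\ell-n+\dots$. This forces $n$ to be extremal and $N[v_t]$ to be exactly determined. In the extremal case, some specific chord such as $v_{n-k+1}v_t$ is then forced. Each of its orientations either closes a $C(k,\ell)$ directly or, via Lemma~\ref{lem:common-start}, deletes a vertex that we already know lies in $N[v_t]$. Inside the windows the induction hypothesis is used to delete the intermediate chords, exactly as in Claim~\ref{lem:L24}.

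\emph{Counting.} With the claim in hand, write $n=k+\ell+1+r$. For $n$ large, $v_0$ is nonadjacent to $v_{\ell+1},\dots,v_{\ell+1+r}$ and to a couple of vertices $v_{n-\ell-1},\dots$ coming from the anti-dichord part. Counting gives $d(v_0)\le k+\ell-2$, a contradiction. This leaves finitely many small values of $n$ (roughly $n=k+\ell+1$ and $n=k+\ell+2$), where the anti-dichord exclusion is not yet available.

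\emph{Small cases.} Here the complement of $G$ is very sparse; for instance, when $n=k+\ell+1$ each vertex has at most one non-neighbour. We locate a crossing pair of short chords and apply Lemma~\ref{lem:boundary-crossing}, whose exceptional cases (a) and (b) pin down their orientations. Then, as in the $n=k+3$ and $n=k+4$ cases of Theorem~\ref{thm:L2}, we show that some vertex $v_1$ loses two neighbours, which contradicts the degree bound.

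I expect the main obstacle to be the anti-dichord induction for general $\ell$, together with these boundary values of $n$. There the window intersections are tight, the condition that the short path has length at least $\ell$ restricts which $v_t$ can be used, and one must use $k\ge\ell+1$ to guarantee that the forced chord closes a $C(k,\ell)$ rather than the symmetric $C(\ell,k)$. I would first attempt to handle the extremal $n$ by examining the single forced chord between $v_{n-k+1}$ and the vertices $v_t$.
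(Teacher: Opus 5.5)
Your proposal has a genuine gap, and it is sharpest when $k=\ell+1$. Already the forward step has a slip: $v_{2\ell+1+s}$ lies in the window $V(v_0\rbjt{C}v_{k+\ell+s-1})$ only if $k\ge\ell+2$, not $k\ge\ell+1$. This step alone can be patched along the lines of Lemma \ref{lem:no-b-plus-one-dichord}. The fundamental problem is the counting step. When $k=\ell+1$, every exclusion you aim for concerns a chord $v_iv_j$ with $\theta_C(v_i,v_j)\ge\ell+1$. Now take the underlying graph with $N[v_i]=V(v_{i-\ell}\rbjt{C}v_{i+\ell})$ for all $i$. It has no such chords and has minimum degree $2\ell=k+\ell-1$, for every $n$. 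This is exactly the structure the paper derives in this case (Lemmas \ref{lem:boundary-window} and \ref{lem:Lkl,r}). So removing long neighbours can never give $d(v_0)\le k+\ell-2$, and the difficulty is not confined to $n\in\{k+\ell+1,k+\ell+2\}$. The contradiction has to come from the orientations of \emph{local} chords, which your proposal never uses. The paper does this as follows:
\begin{itemize}
\item The absence of anti-$\ell$-dichords forces $(v_0,v_\ell),(v_{\ell-1},v_{2\ell-1})\in A(D)$.
\item This rules out all $2$- and $3$-dichords, so every $2$- and $3$-chord points backwards.
\item A zigzag path $P_2$ from $v_{2\ell+1}$ to $v_{\ell-1}$ is then built inside $V(v_{\ell-1}\rbjt{C}v_{2\ell+1})$ from these backward chords and arcs of $C$. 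It has length at least $\ell+1$, and together with $v_{2\ell+1}\rbjt{C}v_{\ell-1}$ it gives a member of $C(\ell+1,\ell)$.
\end{itemize}

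Second, the anti-dichord induction, which you flag as the main obstacle, does not even start. Suppose $(v_{\ell+1+s},v_0)\in A(D)$. The short paths $(v_{\ell+1+s},v_0)\odot v_0\rbjt{C}v_t$ and $v_t\rbjt{C}v_{\ell+1+s}\odot(v_{\ell+1+s},v_0)$ have lengths $t+1$ and $\ell+2+s-t$. So a vertex $v_t$ gets both the Lemma \ref{lem:common-start} window and the Lemma \ref{lem:common-end} window only if $\ell-1\le t\le s+2$. That range is empty for $s<\ell-3$, in particular for the base case $s=0$ whenever $\ell\ge 4$. Hence the mechanism of Claim \ref{lem:L24} is unavailable. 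Since your forward step for $s\ge1$ uses the anti-dichord hypothesis at smaller $s$, the whole induction collapses.

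The paper excludes anti-$\ell$- and anti-$(\ell+1)$-dichords (Lemmas \ref{lem:Ll} and \ref{lem:Ll+1}) by a different idea. It takes the directed cycle $Z$ closed by the anti-dichord and the complementary path $W=w_1\cdots w_m$. It pins down the neighbour index sets $X_{\ell-1}$ and $X_1$ (resp.\ $X_2$). Lemma \ref{lem:rectangle} then propagates bounds $\beta_{i-1}\le\beta_i+1$ and $\alpha_{i-1}\le\alpha_i+1$ (or their step-$2$ analogues) on the extreme indices. Counting non-neighbours of $w_m$ and $w_1$ then forces the crossing edges $v_0w_1$ and $v_\ell w_m$ (resp.\ $v_{\ell+1}w_m$), which close a $C(k,\ell)$. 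The paper never excludes anti-$(\ell+1+s)$-dichords for $s\ge1$. Instead, Lemma \ref{lem:Lkl,r} takes a shortest edge $v_0v_r$ with $k+1\le r\le n-\ell-1$ and analyses $N[v_{r-1}]$, $N[v_{r-2}]$ and $N[v_{\ell-1}]$ to rule it out. For $k\ge\ell+2$, the arcs $(v_0,v_k)$ and $(v_{k-1},v_{k+\ell-1})$, oriented by Lemma \ref{lem:Ll}, then finish the proof.
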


\begin{theorem}\label{thm:Ll}
Let  $k=\ell \geq 3$ and $n\ge 2\ell+1$. If $\delta(G) \geq 2\ell-1$, then $D$ contains a member of  $C(\ell,\ell)$.
\end{theorem}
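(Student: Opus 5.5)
We argue by contradiction and mirror the structure of the proof of Theorem~\ref{thm:L2}: assume $D$ is $C(\ell,\ell)$-free with $\delta(G)\ge 2\ell-1$ and $n\ge 2\ell+1$. The first step is to forbid short dichords. Lemma~\ref{lem:no-b-plus-one-dichord} with $(a,b)=(\ell,\ell)$ (valid since $\ell\ge3$ and $n\ge 2\ell+1$) shows at once that $C$ contains no $(\ell+1)$-dichord. Since $k=\ell$, the problem is symmetric under reversing all arcs, and reversal turns an anti-$t$-dichord into a $t$-dichord of the reversed Hamiltonian cycle. Hence there is also no anti-$(\ell+1)$-dichord. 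Thus no $(\ell+1)$-chord exists in either direction, and the case $k=\ell$ gains a symmetry that was unavailable in Theorem~\ref{thm:L2}.

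Next we extend this to an inductive claim in the spirit of Claim~\ref{lem:L24}. The claim is that for every $s\ge0$ there is no $(\ell+1+s)$-dichord (and, by reversal, no anti one) provided $n\ge 2\ell+1+s$. In the induction step, assume $(v_0,v_{\ell+1+s})\in A(D)$ and examine $N[v_{\ell+s}]$:
\begin{itemize}[label=$\cdot$]
\item Lemma~\ref{lem:common-start}, applied to $(v_0,v_{\ell+1+s})\odot v_{\ell+1+s}\rbjt{C}v_j$ and $v_0\rbjt{C}v_{\ell+s}$, removes every $v_j$ with $j\ge 2\ell+s$.
\item The induction hypothesis removes $v_0,\dots,v_{s-1}$.
\item The base case removes $v_{2\ell+1+s}$, if it lies in the window.
\end{itemize}
Counting what remains should contradict $|N[v_{\ell+s}]|\ge 2\ell$, possibly after one extra exclusion supplied by Lemma~\ref{lem:common-end} on the other side.

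Once long chords are excluded, every neighbour of $v_0$ other than $v_{\pm1}$ lies at cyclic distance $2,\dots,\ell$. That gives at most $2+2(\ell-1)=2\ell$ candidates, while $d(v_0)\ge 2\ell-1$. So each vertex misses at most one vertex at distance $\le\ell$. This forces $n$ to be small (essentially $n\in\{2\ell+1,2\ell+2\}$) and $G$ to be nearly the $\ell$-th power of the cycle. The remaining step is a direct configuration analysis, as in the cases $n=k+3$ and $n=k+4$ of Theorem~\ref{thm:L2}:
\begin{itemize}[label=$\cdot$]
\item Pick chords $v_0v_\ell$ and $v_1v_{\ell+1}$ that cross. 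Lemma~\ref{lem:boundary-crossing} with $k=\ell$ forces them to be oriented as in case (a) or (b).
\item Each orientation should then produce, via Lemma~\ref{lem:common-start} or~\ref{lem:common-end}, a missing edge at some vertex whose closed neighbourhood is already almost determined, which is a contradiction.
\end{itemize}

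The main obstacle is that the degree slack is now only one, not two, relative to the window size. In Theorem~\ref{thm:L2} the window around $v_t$ had size $2k-n+6$. Here the analogous count yields equality rather than strict inequality in the borderline cases $n=2\ell+1$ and $n=2\ell+2$, so several vertices' neighbourhoods are pinned exactly and one must chase orientations of $\ell$-chords around the cycle. I would first try to show that all $\ell$-chords present are oriented as $\ell$-dichords, or all as anti-$\ell$-dichords, using Lemma~\ref{lem:boundary-crossing} on consecutive crossing pairs. I would then derive a contradiction from two such chords together with an arc of $C$ forming a member of $C(\ell,\ell)$.
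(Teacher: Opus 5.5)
Your proof has a genuine gap at the symmetry step, and the gap carries through the rest of the argument. Reversing all arcs does \emph{not} exchange $t$-dichords and anti-$t$-dichords. The reversed Hamiltonian cycle is $v_0v_{n-1}\cdots v_1v_0$, and the reversed arc $(v_{i+t},v_i)$ of a $t$-dichord $(v_i,v_{i+t})$ again runs $t$ steps forward along it. So reversal sends $t$-dichords to $t$-dichords and anti-$t$-dichords to anti-$t$-dichords. Hence ``no anti-$(\ell+1)$-dichord'' and the whole anti half of your induction are unproved.

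This is not a formality. At $n=2\ell+1$ an anti-$(\ell+1)$-dichord is an $\ell$-dichord, so your claim plus Lemma~\ref{lem:no-b-plus-one-dichord} would make $v_0$ miss both $v_\ell$ and $v_{\ell+1}$ and settle that case in one line. In the paper's argument, by contrast, every present $\ell$-chord is forced to be exactly such an arc.

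The forward half works in spirit, but not by counting alone. Counting leaves exactly $2\ell$ candidates for $N[v_{\ell+s}]$, and $v_{2\ell+1+s}$ lies outside the window $V(v_0\rbjt{C}v_{2\ell+s-1})$. You can finish as in Lemma~\ref{lem:no-b-plus-one-dichord}: orient $v_{\ell+s}v_{2\ell+s-1}$, then pin down $N[v_1]$; this is the paper's step (2.1). But every exclusion there of a chord of length between $\ell+1$ and $\ell+s$ needs both orientations. The backward case, a shortest long edge oriented $(v_r,v_0)$, is exactly where the paper does its real work. That is Lemma~\ref{lem:Ll}, the second half of Lemma~\ref{lem:Ll+1}, and steps (2.2)--(2.3) together with the $\alpha_i,\beta_i$ drift argument in Lemma~\ref{lem:bal-no-long}. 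These rest on degree counting along the path complementary to the directed cycle $Z$.

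The endgame is also incomplete. ``This forces $n$ to be small'' is false: having only local neighbours with $d\ge 2\ell-1$ is compatible with every $n$ (think of the $\ell$-th power of $C_n$), so you must argue for all $n\ge 2\ell+1$.

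Lemma~\ref{lem:boundary-crossing} applied to consecutive chords $F_i=v_iv_{i+\ell}$ and $F_{i+1}$ does force both to be $\ell$-dichords. Applied to $F_i$ and $F_{i+\ell-1}$, however, it only forces opposite orientations, so it cannot rule out anti-$\ell$-dichords. To get that $F_i$ and $F_{i+\ell-1}$ never coexist you need Lemma~\ref{lem:Ll} for $n\ge 2\ell+2$, or Lemma~\ref{lem:no-b-plus-one-dichord} at $n=2\ell+1$ (where an anti-$\ell$-dichord is an $(\ell+1)$-dichord).

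The remaining missing idea is a propagation argument:
\begin{itemize}
\item The missing local edges form a matching, so $F_i\notin E(G)$ forces $F_{i-\ell}\in E(G)$.
\item Since $F_{i-\ell}$ and $F_{i-1}$ cannot coexist, $F_{i-1}\notin E(G)$.
\item Iterating around the cycle, every $F_j$ is missing. Then $F_j$ and $F_{j+\ell}$ are two missing local edges at $v_{j+\ell}$, contradicting the matching property.
\end{itemize}
Your final paragraph points toward the coexistence contradiction but supplies neither the orientation fact nor this propagation, which is what guarantees that such a pair exists.
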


Before proving, we give some lemmas.

\begin{lemma}\label{lem:Ll}
Let $k, \ell\ge 3$ and $n\ge k+\ell+2$. If $\delta(G) \geq k+\ell-1$ and $D$ is $C(k,\ell)$-free, then $C$ contains neither  anti-$\ell$-dichords nor anti-$k$-dichords.
\end{lemma}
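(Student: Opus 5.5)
The plan is to imitate the treatment of anti-$3$-dichords in Claim~\ref{lem:L24}, handling both cases at once. Let $m\in\{k,\ell\}$ and let $m'$ be the other value, so $\{m,m'\}=\{k,\ell\}$ and $m+m'=k+\ell$. Suppose to the contrary that $(v_m,v_0)\in A(D)$ is an anti-$m$-dichord. I will pin down the closed neighbourhoods of the two vertices $v_1$ and $v_{m-1}$ inside the short arc $v_0\rbjt{C}v_m$. These two vertices are distinct because $m\ge3$.

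\emph{Step 1: the neighbourhood of $v_1$.} For $m<j\le n-m'$ (a nonempty range, since $n\ge k+\ell+2$), take the path $v_j\rbjt{C}v_0$, of length $n-j\ge m'$, and the path $v_1\rbjt{C}v_m\odot(v_m,v_0)$, of length $m$. These two paths end at $v_0$ and are internally disjoint. By Lemma~\ref{lem:common-end}, applied with the lengths in the appropriate order, we get $v_1v_j\notin E(G)$. Hence $N[v_1]\subseteq V(v_{n-m'+1}\rbjt{C}v_m)$. This set has exactly $m'+m=k+\ell$ vertices, and $|N[v_1]|\ge k+\ell$, so equality holds:
\[
N[v_1]=V(v_{n-m'+1}\rbjt{C}v_m).
\]

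\emph{Step 2: the neighbourhood of $v_{m-1}$.} For $m+m'\le i\le n-1$, take the path $v_m\rbjt{C}v_i$, of length at least $m'$, and the path $(v_m,v_0)\odot v_0\rbjt{C}v_{m-1}$, of length $m$. These start at the common vertex $v_m$. By Lemma~\ref{lem:common-start}, $v_{m-1}v_i\notin E(G)$. Hence $N[v_{m-1}]\subseteq V(v_0\rbjt{C}v_{k+\ell-1})$, and this set again has exactly $k+\ell$ vertices, so
\[
N[v_{m-1}]=V(v_0\rbjt{C}v_{k+\ell-1}).
\]

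\emph{Step 3: forcing orientations.} Several chords are now guaranteed, and I will test their orientations.
\begin{itemize}
\item The chord $v_1v_{n-m'+1}$: the arc $(v_1,v_{n-m'+1})$ would give the two paths $(v_1,v_{n-m'+1})\odot v_{n-m'+1}\rbjt{C}v_0$ (length $m'$) and $v_1\rbjt{C}v_m\odot(v_m,v_0)$ (length $m$). These are internally disjoint from $v_1$ to $v_0$, which is a member of $C(k,\ell)$. Hence $(v_{n-m'+1},v_1)\in A(D)$.
\item The chord $v_{m-1}v_{k+\ell-1}$: the arc $(v_{k+\ell-1},v_{m-1})$ would close the path $v_m\rbjt{C}v_{k+\ell-1}\odot(v_{k+\ell-1},v_{m-1})$ (length $m'$) against $(v_m,v_0)\odot v_0\rbjt{C}v_{m-1}$ (length $m$). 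Hence $(v_{m-1},v_{k+\ell-1})\in A(D)$.
\end{itemize}

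\emph{Step 4: the contradiction, which I expect to be the main obstacle.} I then want a final pair of paths using these two forced arcs, together with $(v_m,v_0)$ and arcs of $C$, that excludes some vertex the neighbourhood equalities of Steps 1--2 claim is present. This is the analogue of the last step of Lemma~\ref{lem:no-b-plus-one-dichord}, where $v_{n-1}v_1$ was excluded. The candidates I would try first are:
\begin{itemize}
\item the paths $(v_{n-m'+1},v_1)\odot v_1\rbjt{C}v_{m-1}\odot(v_{m-1},v_{k+\ell-1})$ and $v_{n-m'+1}\rbjt{C}v_0\odot\cdots$, fed into Lemma~\ref{lem:common-start};
\item applying Lemma~\ref{lem:boundary-crossing} to the crossing chords $v_1v_{n-m'+1}$ and $v_{m-1}v_{k+\ell-1}$, or to $(v_m,v_0)$ with one of them, checking the arc lengths against the exceptional cases~(a) and~(b) via the orientations already forced.
\end{itemize}
Since $n\ge k+\ell+2$, the arcs $v_{k+\ell-1}\rbjt{C}v_{n-m'+1}$ contain at least one extra vertex. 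Making the exceptional configurations of Lemma~\ref{lem:boundary-crossing} impossible should use exactly this slack, together with the hypotheses $k,\ell\ge3$.
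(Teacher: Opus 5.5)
Your Steps 1--3 are correct; Steps 1 and 2 are exactly the paper's boundary computations of $X_1$ and $X_{\ell-1}$. The proof, however, stops where the real work begins. Step 4 is only a list of things to try, and none of them works in general.
\begin{itemize}
\item In your first candidate, the second path out of $v_{n-m'+1}$ must avoid $v_1,\dots,v_{m-1}$. Along $C$ it therefore reaches at most $v_0$, so it has length at most $m'-1$. Nothing you have established about arcs leaving $v_0$ lets it go further, so Lemma~\ref{lem:common-start} never gets lengths covering $\{k,\ell\}$.
\item In your second candidate, the chords $v_1v_{n-m'+1}$ and $v_{m-1}v_{k+\ell-1}$ cross only when $n\le m+2m'-3$. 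This is impossible for $m'\le 4$ and fails for all large $n$.
\item Pairing $(v_m,v_0)$ with either forced chord lands exactly in exception (a) or (b) of Lemma~\ref{lem:boundary-crossing}. The orientations you forced in Step 3 are precisely those exceptions.
\end{itemize}
The underlying obstruction is in the directed cycle $Z=v_0v_1\cdots v_mv_0$. There the only path from $v_{m-1}$ to $v_1$ is $v_{m-1}v_mv_0v_1$, of length $3$. So once $\min\{k,\ell\}\ge4$, Lemma~\ref{lem:rectangle} cannot directly relate the outside neighbourhoods of $v_1$ and $v_{m-1}$. For large $n$ those two windows are far apart and your two forced arcs never interact.

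The missing idea is to propagate through \emph{all} of $v_1,\dots,v_{m-1}$, which is how the paper argues.
\begin{itemize}
\item Let $W=w_1\cdots w_M:=v_{m+1}\cdots v_{n-1}$, where $M=n-m-1\ge m'+1$, and let $X_i=\{x: v_iw_x\in E(G)\}$. Since $v_i$ has at most $m$ neighbours in $Z$, we get $|X_i|\ge m'-1$.
\item For each $1\le i\le m$ the path $v_i\rbjt{Z}v_{i-1}$ has length $m$. Lemma~\ref{lem:rectangle} then gives $\max X_{i-1}\le\min X_i+m'-1$. Hence $\max X_{i-1}\le \max X_i+1$ and $\min X_{i-1}\le\min X_i+1$.
\item Your Steps 1--2 supply the boundary values $X_1=\{M-m'+2,\dots,M\}$ and $X_{m-1}=\{1,\dots,m'-1\}$. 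Chaining gives $M\le k+\ell-3$.
\item Then $w_M$ is non-adjacent to the $M-m'+1$ vertices $v_{m-1-(M-m')},\dots,v_{m-1}$, and $w_1$ is non-adjacent to $v_1,\dots,v_{M-m'+1}$.
\item Since $|N[w_1]|,|N[w_M]|\ge k+\ell=n-(M-m'+1)$, these are all their non-neighbours. In particular $v_mw_M,\,v_0w_1\in E(G)$.
\item Lemma~\ref{lem:rectangle} applied to $w_1\rbjt{C}w_M$ (length $M-1\ge m'$) and $v_0\rbjt{C}v_m$ (length $m$) gives the contradiction.
\end{itemize}
The hypothesis $n\ge k+\ell+2$ is used in the last step, to get $M-1\ge m'$. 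The orientations from your Step 3 are not needed at all.
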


\begin{proof} By symmetry of $k,\ell$, it suffices to prove that $C$ contains no anti-$\ell$-dichord. 
Without loss of generality, suppose to the contrary that $(v_\ell,v_0)\in A(D)$.
Then $Z=v_0v_1\cdots v_\ell v_0$ is a directed cycle of length $\ell+1$.  
For convenience, set
\[
 W=w_1w_2\cdots w_m:=v_{\ell+1}v_{\ell+2}\cdots v_{n-1},
 \quad m=n-\ell-1\ge k+1. 
\]

For $0\le i\le\ell$, let $X_i=\{x:v_iw_x\in E(G)\}$.
Since $v_i$ has at most $\ell$ neighbors in $Z$ and $\delta(G)\ge k+\ell-1$, we have $|X_i|\ge k-1$.

Since $v_\ell \rbjt{Z}v_{\ell-1}$ has length $\ell$, and $v_\ell \rbjt{C}w_x$ has length at least $k$ for $x\ge k$, by Lemma \ref{lem:common-start}, $v_{\ell-1}w_x\notin E(G)$. Then  $X_{\ell-1}\subseteq \{1,\ldots,k-1\}$ and hence $X_{\ell-1}=\{1,\ldots,k-1\}$.
Similarly, we have $X_1=\{m-k+2,\ldots,m\}$.

Let $\alpha_i=\min X_i$ and $\beta_i=\max X_i$. As shown in  Figure \ref{fig:antil}: Note that $v_i\rbjt{Z}v_{i-1}$ has length $\ell$ for any $1\le i\le \ell$, and $w_x\rbjt{C}w_y$ with $1\le x<y\le m$ has length $y-x$, if $v_iw_x, v_{i-1}w_y\in E(G)$, then by Lemma \ref{lem:rectangle}, $y-x\le k-1$. Thus we have $\beta_{i-1}\le \alpha_i+k-1$. Because $|X_i|\ge k-1$, $\alpha_i\le \beta_i-k+2$. So $\beta_{i-1}\le \beta_i+1$ and $\alpha_{i-1}\le \alpha_i+1$.

We first show $v_{\ell}w_m \in E(G)$. Since $\beta_{i-1}\le \beta_i+1$, we have $m=\beta_1\le \beta_{\ell-1}+\ell-2=k+\ell-3$ and $\beta_{\ell-1+k-m}\le \beta_{\ell-1}+m-k=m-1$, thus $v_{\ell-1+k-m},\ldots,v_{\ell-1}$ is nonadjacent to $w_m$. Then $|N[w_m]| \le n-(m-k+1)=k+\ell$, which implies the vertices nonadjacent to $w_m$ in $G$ are precisely $v_{\ell-1+k-m},\ldots,v_{\ell-1}$, and hence $v_{\ell}w_m \in E(G)$.

Next we prove $v_{0}w_1 \in E(G)$. Since $\alpha_{i-1}\le \alpha_i+1$,  $\alpha_1=m-k+2$ and $m\leq k+\ell-3$, we obtain $\alpha_i\ge m-k+3-i (1\le i\le m-k+1)$.
In particular, $\alpha_i\ge2$ for $1\le i\le m-k+1$, so $w_1$ is nonadjacent to $v_1,v_2,\ldots,v_{m-k+1}$. Then $|N[w_1]|\le n-(m-k+1)=k+\ell$. Since $|N[w_1]|\ge k+\ell$, these vertices are exactly the non-neighbors of $w_1$ in $G$, and hence $v_0w_1\in E(G)$.

Since $w_1\rbjt{C}w_m$ and $v_0\rbjt{C}v_\ell$ have lengths $m-1\ge k$ and $\ell$, respectively, the two edges $v_0w_1$ and $v_\ell w_m$ yield a contradiction by Lemma~\ref{lem:rectangle}. This proves Lemma~\ref{lem:Ll}.
\end{proof}

\begin{figure}[htbp]
\centering

% ---------- anti-\ell ----------
\begin{subfigure}[b]{0.45\textwidth}
\centering
\begin{tikzpicture}[
    scale=0.9,
    every node/.style={font=\normalfont\normalsize},
    vertex/.style={
        circle,
        draw=black,
        fill=white,
        line width=0.75pt,
        inner sep=1.5pt
    },
    edge/.style={line width=0.8pt},
    grayedge/.style={line width=0.9pt, gray!70},
    cwmark/.style={
        line width=0.8pt,
        -{Stealth[length=1.9mm,width=1.35mm]}
    },
    midarrow/.style={
        postaction={decorate},
        decoration={
            markings,
            mark=at position 0.5 with {
                \arrow{Stealth[length=2.1mm,width=1.5mm]}
            }
        }
    }
]

\def\R{1.85}

% fixed bounding box for alignment
\path[use as bounding box]
    (-2.45,-2.35) rectangle (2.55,2.35);

% vertices
\coordinate (v0) at (140:\R);
\coordinate (vim) at (70:\R);
\coordinate (vi) at (42:\R);
\coordinate (vell) at (10:\R);
\coordinate (alphai) at (-42:\R);
\coordinate (betaim) at (-140:\R);

% circle and clockwise marks
\draw[edge] (0,0) circle (\R);

\draw[cwmark]
    (72:\R)
    arc[start angle=72, end angle=55, radius=\R];

\draw[cwmark]
    (8:\R)
    arc[start angle=8, end angle=-10, radius=\R];

% chords
% midpoint arrow from v_\ell to v_0
\draw[grayedge,midarrow] (vell) -- (v0);

\draw[grayedge] (vim) -- (betaim);
\draw[grayedge] (vi) -- (alphai);

% vertices
\node[vertex] at (v0) {};
\node[vertex] at (vim) {};
\node[vertex] at (vi) {};
\node[vertex] at (vell) {};
\node[vertex] at (alphai) {};
\node[vertex] at (betaim) {};

% labels
\node[above left=0pt,yshift=-2pt] at (v0) {$v_0$};
\node[above=1pt] at (vim) {$v_{i-1}$};
\node[above right=0pt, yshift=-3pt] at (vi) {$v_i$};
\node[right=1pt] at (vell) {$v_{\ell}$};
\node[below right=0pt,xshift=-4pt] at (alphai) {$\alpha_i$};
\node[below left=0pt, xshift=8pt] at (betaim) {$\beta_{i-1}$};

\end{tikzpicture}
\caption{anti-$\ell$-dichord}
\label{fig:antil}
\end{subfigure}
\hspace{0.01\textwidth}
% ---------- anti-(\ell+1) ----------
\begin{subfigure}[b]{0.45\textwidth}
\centering
\begin{tikzpicture}[
    scale=0.9,
    every node/.style={font=\normalfont\normalsize},
    vertex/.style={
        circle,
        draw=black,
        fill=white,
        line width=0.75pt,
        inner sep=1.5pt
    },
    edge/.style={line width=0.8pt},
    grayedge/.style={line width=0.9pt, gray!70},
    cwmark/.style={
        line width=0.8pt,
        -{Stealth[length=1.9mm,width=1.35mm]}
    },
    midarrow/.style={
        postaction={decorate},
        decoration={
            markings,
            mark=at position 0.5 with {
                \arrow{Stealth[length=2.1mm,width=1.5mm]}
            }
        }
    }
]

\def\R{1.85}

% fixed bounding box for alignment
\path[use as bounding box]
    (-2.45,-2.35) rectangle (2.55,2.35);

% vertices
\coordinate (v0) at (140:\R);
\coordinate (vimd) at (70:\R);
\coordinate (vi) at (42:\R);
\coordinate (vellp) at (10:\R);
\coordinate (alphai) at (-42:\R);
\coordinate (betaimd) at (-140:\R);

% circle and clockwise marks
\draw[edge] (0,0) circle (\R);

\draw[cwmark]
    (72:\R)
    arc[start angle=72, end angle=55, radius=\R];

\draw[cwmark]
    (8:\R)
    arc[start angle=8, end angle=-10, radius=\R];

% chords
% midpoint arrow from v_{\ell+1} to v_0
\draw[grayedge,midarrow] (vellp) -- (v0);

\draw[grayedge] (vimd) -- (betaimd);
\draw[grayedge] (vi) -- (alphai);

% vertices
\node[vertex] at (v0) {};
\node[vertex] at (vimd) {};
\node[vertex] at (vi) {};
\node[vertex] at (vellp) {};
\node[vertex] at (alphai) {};
\node[vertex] at (betaimd) {};

% labels
\node[above left=0pt,yshift=-2pt] at (v0) {$v_0$};
\node[above=1pt] at (vimd) {$v_{i-d}$};
\node[above right=0pt, yshift=-3pt] at (vi) {$v_i$};
\node[right=1pt] at (vellp) {$v_{\ell+1}$};
\node[below right=0pt,xshift=-4pt] at (alphai) {$\alpha_i$};
\node[below left=0pt, xshift=8pt] at (betaimd) {$\beta_{i-d}$};
\end{tikzpicture}
\caption{anti-$(\ell+1)$-dichord}
\label{fig:antil+1}
\end{subfigure}

\caption{Illustrations for Lemmas~\ref{lem:Ll} and \ref{lem:Ll+1}}
\label{fig:2}
\end{figure}

\begin{lemma}\label{lem:Ll+1}
Let  $k, \ell \geq 3 $. If $\delta(G) \geq k+\ell-1$ and $D$ is $C(k,\ell)$-free, then $C$  contains neither $(\ell+1)$-dichords nor  $(k+1)$-dichords when $n \geq k+\ell+1$ and neither  anti-$(\ell+1)$-dichords nor  anti-$(k+1)$-dichords when $n \geq k+\ell+2$.
\end{lemma}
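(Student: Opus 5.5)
The positive half of the statement will come from Lemma~\ref{lem:no-b-plus-one-dichord}. Since $k,\ell\ge3$, we may apply that lemma with $(a,b)=(k,\ell)$; this needs $a\ge3$, $b\ge2$, $n\ge a+b+1$ and $\delta(G)\ge a+b-1$, all of which hold, so $C$ has no $(\ell+1)$-dichord. Every member of $C(k,\ell)$ is also a member of $C(\ell,k)$ after exchanging the two paths, so $D$ is $C(\ell,k)$-free as well. Applying the lemma again with $(a,b)=(\ell,k)$ rules out $(k+1)$-dichords. By this same symmetry, for the anti-dichord half it suffices to exclude anti-$(\ell+1)$-dichords when $n\ge k+\ell+2$.

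For that case, suppose $(v_{\ell+1},v_0)\in A(D)$. I will imitate the proof of Lemma~\ref{lem:Ll}, now with the directed cycle $Z=v_0v_1\cdots v_{\ell+1}v_0$ of length $\ell+2$. Write $W=w_1\cdots w_m=v_{\ell+2}\cdots v_{n-1}$, so that $m=n-\ell-2\ge k$.

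Set $X_i=\{x: v_iw_x\in E(G)\}$ for $0\le i\le \ell+1$, with $\alpha_i=\min X_i$ and $\beta_i=\max X_i$. Since $v_i$ has at most $\ell+1$ neighbours in $Z$, we get $|X_i|\ge k-2$.

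Next I will pin down the extreme sets using Lemmas~\ref{lem:common-start} and \ref{lem:common-end}. The path $v_{\ell+1}\rbjt{Z}v_{\ell-1}$ has length $\ell$, and $v_{\ell+1}\rbjt{C}w_x$ has length at least $k$ once $x\ge k-1$. This forces $X_{\ell-1}\subseteq\{1,\dots,k-2\}$, hence $X_{\ell-1}=\{1,\dots,k-2\}$. Symmetrically, $X_2=\{m-k+3,\dots,m\}$.

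I will then use the rectangle Lemma~\ref{lem:rectangle} on pairs $v_i,v_{i-d}$ for which $v_i\rbjt{Z}v_{i-d}$ has length at least $\ell$ (namely $d=1$ and $d=2$). If $v_iw_x,v_{i-d}w_y\in E(G)$ with $x<y$, this gives $y-x\le k-1$, so $\beta_{i-d}\le\alpha_i+k-1\le\beta_i+1$, using $\alpha_i\le\beta_i-k+3$. Combined with the analogous bound for the $\alpha$'s, this yields monotonicity-type chains as in Lemma~\ref{lem:Ll}, but with step $d=2$ (see Figure~\ref{fig:antil+1}).

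From these chains I will count the non-neighbours of $w_m$ and of $w_1$ among the $v_i$. The goal is to show that $|N[w_m]|\ge k+\ell$ and $|N[w_1]|\ge k+\ell$ force $v_{\ell+1}w_m\in E(G)$ (or $v_\ell w_m\in E(G)$) and $v_0w_1\in E(G)$ (or $v_1w_1\in E(G)$). Then $w_1\rbjt{C}w_m$, of length $m-1\ge k-1$, may be too short, so I would instead extend this path through $v_{\ell+1}$ or $v_0$ along $C$ to gain length. The other path will be $v_0\rbjt{C}v_\ell$ or $v_1\rbjt{C}v_{\ell+1}$, of length $\ell$. With these two paths, Lemma~\ref{lem:rectangle}, or Lemma~\ref{lem:boundary-crossing} applied to the chords $v_0w_1$ and $v_{\ell+1}w_m$, gives the contradiction.

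The main obstacle is this final counting step. The slack is one less than in Lemma~\ref{lem:Ll} (we only have $|X_i|\ge k-2$ and $m\ge k$), so the chains are looser. The boundary case $m=k$, i.e.\ $n=k+\ell+2$, will likely need separate handling. There I would first determine $N[w_1]$ and $N[w_m]$ exactly, since each has exactly the forced non-neighbours. Then I would orient the edge between $w_1$ and $v_1$ (or between $w_m$ and $v_\ell$) using the arc $(v_{\ell+1},v_0)$ to close a $C(k,\ell)$ directly, as was done for the case $n=k+4$ in Claim~\ref{lem:L24}.
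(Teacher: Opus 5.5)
\textbf{Verdict: the plan follows the paper's proof, but the anti-dichord half has two wrong steps and leaves the decisive counting undone.} The positive half is correct: Lemma~\ref{lem:no-b-plus-one-dichord} plus the $k\leftrightarrow\ell$ symmetry. For the anti half your setup is the paper's: the cycle $Z$, the sets $X_i$ with $|X_i|\ge k-2$, chains from Lemma~\ref{lem:rectangle} with $d\in\{1,2\}$, and a count of the non-neighbours of $w_1$ and $w_m$. Two of the stated steps, however, fail as written.

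First, $v_{\ell+1}\rbjt{C}w_x$ has length exactly $x$, not at least $k$ once $x\ge k-1$. At $x=k-1$, Lemma~\ref{lem:common-start} only excludes the arc $(w_{k-1},v_{\ell-1})$. The opposite arc $(v_{\ell-1},w_{k-1})=(v_{\ell-1},v_{k+\ell})$ is a $(k+1)$-dichord, and it must be excluded using the positive half you just proved. The same happens for $X_2$, where $(w_{m-k+2},v_2)$ is a $(k+1)$-dichord. Without this you only get $X_{\ell-1}\subseteq\{1,\dots,k-1\}$, which $|X_{\ell-1}|\ge k-2$ does not pin down.

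Second, $\alpha_i\le\beta_i-k+3$ gives $\beta_{i-d}\le\alpha_i+k-1\le\beta_i+2$, not $\beta_i+1$. The true chains are $\beta_{\ell-1-u}\le k-2+2\lceil u/2\rceil$ and $\alpha_{2+u}\ge m-k+3-2\lceil u/2\rceil$. With these there is no spare slack, so the count cannot simply be waved through.

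The counting step you leave open is the heart of the proof, and it needs ideas your sketch lacks.
\begin{itemize}
\item $w_mv_\ell\notin E(G)$ and $w_1v_1\notin E(G)$ always hold, by Lemmas~\ref{lem:common-start} and \ref{lem:common-end} applied with the arc $(v_{\ell+1},v_0)$. So your fallback outcomes $v_\ell w_m\in E(G)$ and $v_1w_1\in E(G)$ never occur. Instead, these two vertices are precisely the extra non-neighbours needed to make up for $Z$ being one vertex longer than in Lemma~\ref{lem:Ll}.
\item $\beta_2=m$ gives $m-k+2\le 2\lceil(\ell-3)/2\rceil$. For $\ell=3$ this is impossible, and indeed there $X_2=X_{\ell-1}$ is already contradictory. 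For $\ell\ge4$ it gives $m-k+1\le \ell-3$, so the chains reach far enough.
\item Split on the parity of $m-k$. If $m-k$ is odd, $w_m$ misses $v_{\ell-m+k-2},\dots,v_\ell$. That is $m-k+3$ vertices, contradicting $|N[w_m]|\ge k+\ell$.
\item If $m-k$ is even, $w_m$ misses exactly $v_{\ell-m+k-1},\dots,v_\ell$, and $w_1$ misses exactly $v_1,\dots,v_{m-k+2}$. This forces $v_{\ell+1}w_m\in E(G)$ and $v_0w_1\in E(G)$.
\end{itemize}
Since $m-k$ is even, either $m\ge k+2$ or $m=k$. If $m\ge k+2$, then $w_1\rbjt{C}w_m$ (length $m-1\ge k$) and $v_0\rbjt{C}v_{\ell+1}$ contradict Lemma~\ref{lem:rectangle} directly; no extension through $v_0$ or $v_{\ell+1}$ is needed. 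The case $m=k$ needs none of the neighbourhood analysis you propose. When $n=k+\ell+2$, the arc $(v_{\ell+1},v_0)$ is itself a $(k+1)$-dichord, already excluded by the first half.
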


\begin{proof}
 By symmetry of $k$ and $\ell$, we only need to prove that $C$ contains no $(\ell+1)$-dichords when $n \geq k+\ell+1$ and no anti-$(\ell+1)$-dichords when $n \geq k+\ell+2$.
Applying Lemma~\ref{lem:no-b-plus-one-dichord} with $(a,b)=(k,\ell)$,  $C$ contains no $(\ell+1)$-dichords when $n \geq k+\ell+1$. 

Let $n\geq k+\ell+2$.
Without loss of generality, suppose to the contrary that $(v_{\ell+1},v_0)\in A(D)$. Then $Z=v_0v_1\cdots v_{\ell+1}v_0$ is a directed cycle. Set $W=w_1w_2\cdots w_m:=v_{\ell+2}v_{\ell+3}\cdots v_{n-1}$, where $m=n-\ell-2\ge k$.
For $0\le i\le\ell+1$, let $X_i=\{x:v_iw_x\in E(G)\}$. Since $v_i$ has at most $\ell+1$ neighbors in $Z$ and $\delta(G)\ge k+\ell-1$,  $|X_i|\ge k-2$.

Consider $X_{\ell-1}$. Note that $(v_{\ell+1},v_0)\odot v_0\rbjt{C}v_{\ell-1}$ has length $\ell$, and  $v_{\ell+1}\rbjt{C}w_x$ has length at least $k$ for $x\ge k$. By Lemma~\ref{lem:common-start}, $v_{\ell-1}w_x\notin E(G)$, and hence $X_{\ell-1}\subseteq\{1,\ldots,k-1\}$.  If $(w_{k-1},v_{\ell-1})\in A(D)$, then $v_{\ell+1}\rbjt{C}w_{k-1}\odot(w_{k-1},v_{\ell-1})$ and $(v_{\ell+1},v_0)\odot v_0\rbjt{C}v_{\ell-1}$ form a member of $C(k,\ell)$. If $(v_{\ell-1},w_{k-1})\in A(D)$, then it is a $(k+1)$-dichord, a contradiction. Hence $X_{\ell-1}\subseteq\{1,\ldots,k-2\}$, and therefore $X_{\ell-1}=\{1,\ldots,k-2\}$.

Consider $X_{2}$. Note that $v_2\rbjt{C}v_{\ell+1}\odot(v_{\ell+1},v_0)$ has length $\ell$, and $w_x\rbjt{C}v_0$ has length at least $k$ for $x\le m-k+1$. By Lemma \ref{lem:common-end}, $v_2w_x\notin E(G)$. Moreover, if  $(v_2,w_{m-k+2})\in A(D)$, then $(v_2,w_{m-k+2})\odot w_{m-k+2}\rbjt{C}v_0$ and $v_2\rbjt{C}v_{\ell+1}\odot(v_{\ell+1},v_0)$ form a member of $C(k,\ell)$. If $(w_{m-k+2},v_2) \in A(D)$, it is a $(k+1)$-dichord, a contradiction. Hence $X_2\subseteq\{m-k+3,\ldots,m\}$, and therefore $X_2=\{m-k+3,\ldots,m\}$.

If $\ell=3$, then $X_{\ell-1}=X_2$, forcing $\{1,\ldots,k-2\}=\{m-k+3,\ldots,m\}$, which is impossible since $m\ge k$. Hence we may assume $\ell\ge4$.

Let $\alpha_i=\min X_i$ and $\beta_i=\max X_i$. For $d\in\{1,2\}$,  $v_i\rbjt{Z}v_{i-d}$ has length at least $\ell$. Therefore, as shown in Figure \ref{fig:antil+1}, if $1\le x<y\le m$ and $v_iw_x,v_{i-d}w_y\in E(G)$, then Lemma~\ref{lem:rectangle} gives $y-x\le k-1$. Thus $\beta_{i-d}\le \alpha_i+k-1$ for $d=1,2$. Because $|X_i|\ge k-2$,  $\alpha_i\le \beta_i-k+3$, and hence $\beta_{i-d}\le \beta_i+2$ and $\alpha_{i-d}\le \alpha_i+2$ for $d=1,2$.

Since $\beta_{i-d}\le \beta_i+2$,  $m=\beta_2\le \beta_{\ell-1}+2\lceil(\ell-3)/2\rceil=k-2+2\lceil(\ell-3)/2\rceil$, i.e, $m-k+2\leq 2\lceil(\ell-3)/2\rceil$. 

By Lemma~\ref{lem:common-start}, $w_mv_\ell\notin E(G)$, since the paths $(v_{\ell+1},v_0)\odot v_0\rbjt{C}v_\ell$ and $v_{\ell+1}\rbjt{C}w_m$ have lengths at least $\ell$ and $k$, respectively. Moreover,   $\beta_{\ell-1-u}\le 
\beta_{\ell-1}+2\lceil u/2\rceil=
k-2+2\lceil u/2\rceil$ for all  $0\le u\le \ell-3$.
Since $\ell\geq 4$, $m-k+1\leq 2\lceil(\ell-3)/2\rceil-1\leq \ell-3$.

If $m-k+1$ is even, then $\beta_{\ell-1-u} \le m-1$ for $0\le u\le m-k+1$. Thus $w_m$ is nonadjacent to $v_\ell,v_{\ell-1},\ldots,v_{\ell-m+k-2}$, which are $m-k+3$ vertices. Hence $|N[w_m]|\le n-(m-k+3)=k+\ell-1$, contradicting $|N[w_m]|\ge k+\ell$. 

Now we suppose $m-k+1$ is odd. For $0\le u\le m-k$, we have $\beta_{\ell-1-u}\le m-1$. Thus $w_m$ is nonadjacent to $v_\ell,v_{\ell-1},\ldots,v_{\ell-m+k-1}$. Hence $|N[w_m]|\le n-(m-k+2)=k+\ell$. Since $|N[w_m]|\ge k+\ell$, these vertices are exactly the non-neighbors of $w_m$ in $G$. In particular, $v_{\ell+1}w_m\in E(G)$.

Next we prove $v_{0}w_1 \in E(G)$. Since  $\alpha_{i-d}\le \alpha_i+2$ for $d=1,2$ and $\alpha_2=m-k+3$,  $\alpha_{2+u}\ge m-k+3-2\lceil u/2\rceil$ for all $0\le u\le \ell-3$.
Since $m-k+2$ is even, for $0\le u\le m-k$, we have $\alpha_{2+u}\ge2$. Hence $w_1$ is nonadjacent to $v_2,v_3,\ldots,v_{m-k+2}$. By Lemma~\ref{lem:common-end}, 
it is also nonadjacent to $v_1$, since the paths $v_1\rbjt{C}v_{\ell+1}\odot(v_{\ell+1},v_0)$ and $w_1\rbjt{C}v_0$ have lengths at least $\ell$ and $k$, respectively. Thus $w_1$ is nonadjacent to $v_1,v_2,\ldots,v_{m-k+2}$. Hence $|N[w_1]|\le n-(m-k+2)=k+\ell$. Since $|N[w_1]|\ge k+\ell$, these vertices are exactly the non-neighbors of $w_1$ in $G$. In particular, $v_0w_1\in E(G)$.

If $m-k+2\ge4$, then $w_1\rbjt{C}w_m$ and $v_0\rbjt{C}v_{\ell+1}$ have lengths $m-1\ge k$ and $\ell+1\ge\ell$, respectively. Thus the existences of $v_0w_1$ and $v_{\ell+1}w_m$ contradict Lemma~\ref{lem:rectangle}.
If $m-k+2=2$, i.e., $m=k$,  then $n=k+\ell+2$ and  $(v_{\ell+1},v_0)$ is a $(k+1)$-dichord, a contradiction. This completes the proof.
\end{proof}

\begin{lemma}\label{lem:boundary-window}
Let $k\ge\ell+1 \ge 4$ and  $n=k+\ell+1$. If  $\delta(G)\ge k+\ell-1$, and $D$ is $C(k,\ell)$-free, then $N[v_i]=V(v_{i-\ell}\rbjt{C}v_{i+k-1})=V(D)\setminus\{v_{i+k}\}$ for every $0\le i\le n-1$. Moreover, $k=\ell+1$.
\end{lemma}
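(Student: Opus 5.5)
The plan is to use the near-completeness of $G$. Since $n=k+\ell+1$ and $\delta(G)\ge k+\ell-1=n-2$, every vertex has at most one non-neighbour. So it suffices to prove that $v_iv_{i+k}\notin E(G)$ for every $i$. Then $v_{i+k}$ is the unique non-neighbour of $v_i$, and
\[
N[v_i]=V(D)\setminus\{v_{i+k}\}=V(v_{i-\ell}\rbjt{C}v_{i+k-1}),
\]
because $v_{i+k}=v_{i-\ell-1}$ modulo $n$. By cyclic symmetry it is enough to treat $i=0$. The ``moreover'' part is then a counting remark: non-adjacency is symmetric, so the unique non-neighbour of $v_{k}$, namely $v_{2k}$, must be $v_0$. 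Hence $2k\equiv 0 \pmod n$. Since $k<n=k+\ell+1\le 2k$, this forces $2k=n$, that is, $k=\ell+1$.

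To prove $v_0v_k\notin E(G)$, I would split on the orientation of a putative arc. The pair $v_0,v_k$ is at forward distance $k$ and backward distance $\ell+1$ on $C$. If $(v_k,v_0)\in A(D)$, this arc is an $(\ell+1)$-dichord, which Lemma~\ref{lem:Ll+1} excludes since $n\ge k+\ell+1$. So the real work is to exclude a $k$-dichord $(v_0,v_k)$; by cyclic symmetry this is the same as excluding every $k$-dichord $(v_i,v_{i+k})$.

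For the $k$-dichord I would imitate the proofs of Lemmas~\ref{lem:no-b-plus-one-dichord} and~\ref{lem:Ll}. The arc $(v_0,v_k)$ closes the directed cycle $v_0v_kv_{k+1}\cdots v_{n-1}v_0$ of length $\ell+2$. I would apply Lemmas~\ref{lem:common-start} and~\ref{lem:common-end} with one path running along $C$ and the other using the shortcut $(v_0,v_k)$, and thereby rule out the edges $v_1v_j$ and $v_{k-1}v_j$ for suitable $j$ in the cycle part. Since each vertex misses only one vertex, this should force specific edges such as $v_{k-1}v_{k+\ell}$ or $v_1v_{k+1}$. Their orientations are then fixed one at a time: an orientation that yields a forward $(\ell+1)$- or $(k+1)$-dichord is forbidden by Lemma~\ref{lem:Ll+1}, and the remaining orientation is checked to close a member of $C(k,\ell)$, as in the last step of Lemma~\ref{lem:no-b-plus-one-dichord}.

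A complementary tool is Lemma~\ref{lem:boundary-crossing}. Apply it to two crossing $(\ell+1)$-chords $uv=v_0v_{\ell+1}$ and $xy=v_av_{a+\ell+1}$ with $a=k-1$. This is legal when $k\le \ell+2$, and then $|u\rbjt{C}x|=k-1$ and $|v\rbjt{C}y|=k-1\ge \ell-1$. Exception (a) would need the arc $(v_0,v_{\ell+1})$, which is a forbidden $(\ell+1)$-dichord. Exception (b) would need $|v\rbjt{C}y|=\ell-1$, which fails because $k-1\ge \ell>\ell-1$. So these two chords cannot coexist. This shows that the non-edges must be arranged in a rigid pattern, which I would combine with the matching structure of $\overline{G}$.

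The main obstacle is excluding the $k$-dichord $(v_0,v_k)$. Neither of the lemmas for anti-dichords applies directly at $n=k+\ell+1$, and the paths available through $(v_0,v_k)$ tend to have length exactly $k-1$ or $\ell$ rather than the $k$ and $\ell$ needed. My first attempt is to examine $N[v_{k-1}]$ and $N[v_1]$ closely. Each misses at most one vertex, so it contains edges into $V(v_{k+1}\rbjt{C}v_{n-1})$. I would show that every orientation of such an edge either produces a member of $C(k,\ell)$ together with $(v_0,v_k)$, or produces a forbidden $(\ell+1)$- or $(k+1)$-dichord. If this fails for small $\ell$, the fallback is to use the matching structure of $\overline{G}$ to pick a different crossing pair for Lemma~\ref{lem:boundary-crossing}.
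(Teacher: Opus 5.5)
Your reduction to proving $v_iv_{i+k}\notin E(G)$ is correct. So is the exclusion of $(v_k,v_0)$ as an $(\ell+1)$-dichord via Lemma~\ref{lem:Ll+1}, and so is the derivation of $k=\ell+1$ from $2k\equiv 0 \pmod n$. The gap is the step you flag yourself: you never exclude the $k$-dichord $(v_0,v_k)$, and the local check you propose cannot do it.

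Carry your plan out. If $(v_{k-1},v_{k+\ell-1})\in A(D)$, the paths $v_0\rbjt{C}v_{k-1}\odot(v_{k-1},v_{k+\ell-1})$ and $(v_0,v_k)\odot v_k\rbjt{C}v_{k+\ell-1}$ have lengths $k$ and $\ell$. The reverse arc $(v_{k+\ell-1},v_{k-1})$ is a $(k+1)$-dichord. So $v_{k-1}v_{k+\ell-1}\notin E(G)$, and the edge $v_{k-1}v_{n-1}$ is forced, as you predict.

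But $(v_{k-1},v_{n-1})$ is a forbidden $(\ell+1)$-dichord, and the surviving orientation $(v_{n-1},v_{k-1})$ is just another $k$-dichord, shifted back by one position. It does not close a member of $C(k,\ell)$. In $C$ plus the arcs $(v_0,v_k)$ and $(v_{n-1},v_{k-1})$, the two-block cycles have block lengths $(k,1)$, $(2,2)$ and $(k-1,\ell+1)$. None of these lies in $C(k,\ell)$ when $k\ge\ell+2$.

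The same happens on the $v_1$ side. There $v_1v_{k+2}$ is excluded and $v_1v_{k+1}$ is forced, and its only admissible orientation $(v_1,v_{k+1})$ is again a $k$-dichord. So a $k$-dichord reproduces itself rather than giving a local contradiction.

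Your fallback through Lemma~\ref{lem:boundary-crossing} does not help either. The vertices $v_0,v_{\ell+1},v_{k-1},v_{k+\ell}$ are distinct and in the required order only when $k=\ell+1$; for $k=\ell+2$ we have $v_{k-1}=v_{\ell+1}$. When $k=\ell+1$, $n=2k$, so both orientations of every $k$-chord are $(\ell+1)$-dichords, and the claim is already immediate from Lemma~\ref{lem:Ll+1}. The substantive case $k\ge\ell+2$ is untouched.

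The missing idea is to use this self-reproduction globally, as the paper does. Iterate the implication $(v_i,v_{i+k})\in A(D)\Rightarrow v_{i+k-1}v_{i+k+\ell-1}\notin E(G)\Rightarrow (v_{i-1},v_{i+k-1})\in A(D)$ all the way around $C$. The second arrow holds because $v_{i+k+\ell-1}=v_{i-2}$ is then the unique non-neighbour of $v_{i+k-1}$, and $(v_{i+k-1},v_{i-1})$ is an $(\ell+1)$-dichord.

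The iteration shows that every $\ell$-chord $v_jv_{j+\ell}$ is a non-edge. Since $n=k+\ell+1>2\ell$, the non-edges $v_{j-\ell}v_j$ and $v_jv_{j+\ell}$ are distinct. So each $v_j$ has two non-neighbours, contradicting $\delta(G)\ge n-2$. Only after this does your counting argument for $k=\ell+1$ apply.
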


\begin{proof}
Since $\delta(G)\geq n-2$, each $v_i$ has at most one non-neighbor and $E(\overline G)$ is a matching. Without loss of generality, suppose to the contrary that  $v_0v_k \in E(G)$. By Lemma \ref{lem:Ll+1} and $n=k+\ell+1$, $(v_0,v_k) \in A(D)$. 

Suppose that $(v_i,v_{i+k})\in A(D)$ for $i\leq 0$. Since $n=k+\ell+1$ and $k\geq \ell+1\geq 4$, $v_i,v_{i-1+k},v_{i+k},v_{i-1+k+\ell}$ appear on $C$ in this order.
By Lemma \ref{lem:Ll+1}, $(v_{i-1+k+\ell},v_{i-1+k}) \notin A(D)$.
If $(v_{i-1+k},v_{i-1+k+\ell}) \in A(D)$, then $v_i\rbjt{C}v_{i-1+k} \odot (v_{i-1+k},v_{i-1+k+\ell})$ and $(v_i,v_{i+k}) \odot v_{i+k}\rbjt{C}v_{i-1+k+\ell}$ form a member of $C(k,\ell)$.   Thus, $v_{i-1+k}v_{i-1+k+\ell} \notin E(G)$ and hence $v_{i-1}v_{i-1+k} \in E(G)$. By Lemma \ref{lem:Ll+1}, we have $(v_{i-1},v_{i-1+k}) \in A(D)$.

%By Lemma \ref{lem:Ll+1}, $(v_{k+\ell-1},v_{k-1}) \notin A(D)$.
%If $(v_{k-1},v_{k+\ell-1}) \in A(D)$, then $v_0\rbjt{C}v_{k-1} \odot (v_{k-1},v_{k+\ell-1})$ and $(v_0,v_k) \odot v_k\rbjt{C}v_{k+\ell-1}$ form a member of $C(k,\ell)$.   Thus, $v_{k-1}v_{k+\ell-1} \notin E(G)$ and hence $v_{n-1}v_{k-1} \in E(G)$. By Lemma \ref{lem:Ll+1}, we have $(v_{n-1},v_{k-1}) \in A(D)$. 

By carrying out the above iteration for $i=0,\ldots, -n+1$, we have $v_{i-1+k}v_{i-1+k+\ell} \notin E(G)$ and 
$(v_{i},v_{i+k})\in A(D)$ for every $0\leq i\leq n-1$. The first result deduces that $v_jv_{j+\ell}\notin E(G)$ for every  $0\le j\le n-1$.
Since $n=k+\ell+1 >2\ell$, all $v_jv_{j+\ell}$ are distinct, which contradicts $E(\overline G)$ is a matching. 
Therefore, $N[v_i]=V(v_{i-\ell}\rbjt{C}v_{i+k-1})=V(D)\setminus\{v_{i+k}\}$ for every $0\le i\le n-1$.
If $k\ge\ell+2$, then for $0\le i\le n-1$, $v_iv_{i+k}$ are $n$ different non-edges since $n=k+\ell+1 < 2k$, which contradicts $E(\overline G)$ is a matching. Hence $k=\ell+1$. 
%Therefore $k=\ell+1$. Then $n=2(\ell+1)$, and each orientation of the edge $v_iv_{i+k}$ is an $(\ell+1)$-dichord. By Lemma~\ref{lem:Ll+1}, $v_iv_{i+k} \notin E(G)$. Since $|N[v_i]|\ge n-1$, this is the unique non-neighbor of $v_i$. The directed interval $v_{i-\ell}\rbjt{C}v_{i+k-1}$ contains every vertex except $v_{i+k}$, proving Lemma~\ref{lem:boundary-window}.
\end{proof}

\begin{lemma} \label{lem:Lkl,r}
Let  $k \geq \ell+1 \geq 4$ and  $n \ge k+\ell+2$. If  $\delta(G)\ge k+\ell-1$ and $D$ is $C(k,\ell)$-free, then $N[v_i]=V(v_{i-\ell}\rbjt{C}v_{i+k}) \setminus \{v_{i+\ell+1}\}$ for every $0\leq i\leq n-1$.
\end{lemma}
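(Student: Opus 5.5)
The plan is to show that every vertex $v_i$ is nonadjacent to every vertex outside the window $V(v_{i-\ell}\rbjt{C}v_{i+k})$, and also to $v_{i+\ell+1}$. The set $V(v_{i-\ell}\rbjt{C}v_{i+k})\setminus\{v_{i+\ell+1}\}$ has exactly $k+\ell$ vertices. Since $|N[v_i]|\ge k+\ell$, this containment then forces equality.

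The non-edge $v_iv_{i+\ell+1}$ is immediate from Lemma~\ref{lem:Ll+1}. Indeed, $(v_i,v_{i+\ell+1})$ would be an $(\ell+1)$-dichord, and $(v_{i+\ell+1},v_i)$ would be an anti-$(\ell+1)$-dichord. Both are excluded because $n\ge k+\ell+2$.

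For vertices outside the window, write $j=i+t$ with $k+1\le t\le n-\ell-1$. An arc $(v_i,v_j)$ is a $t$-dichord with $t\in[k+1,n-\ell-1]$. An arc $(v_j,v_i)$ is an $s$-dichord with $s=n-t\in[\ell+1,n-k-1]$. So it suffices to prove the following claim: \emph{$C$ has no $s$-dichord for $s\in[\ell+1,n-k-1]\cup[k+1,n-\ell-1]$.} This exclusion of a middle range around $k$ is forced, since $k$-dichords must exist: $v_iv_{i+k}$ lies in the claimed neighbourhood, and Lemma~\ref{lem:Ll} rules out the orientation $(v_{i+k},v_i)$.

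The four endpoint values are already handled. Lemma~\ref{lem:Ll+1} excludes $(\ell+1)$- and $(k+1)$-dichords, and anti-$(\ell+1)$- and anti-$(k+1)$-dichords, which are exactly the $(n-\ell-1)$- and $(n-k-1)$-dichords. I will prove the claim by induction moving inward from these endpoints, in the style of Claim~\ref{lem:L24}.

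\textbf{Inner endpoints moving up.} Suppose $(v_0,v_s)\in A(D)$ with $s$ just above $\ell+1$ (respectively $k+1$), and assume smaller values in the same interval are excluded. Look at $N[v_{s-1}]$, or a vertex near $v_s$ inside the arc $v_0\rbjt{C}v_s$. Lemma~\ref{lem:common-start}, applied to $(v_0,v_s)\odot v_s\rbjt{C}v_j$ and $v_0\rbjt{C}v_{s-1}$, kills all far neighbours $v_j$. The induction hypothesis kills the long chords back toward $v_0$. Lemma~\ref{lem:Ll+1} kills $v_{s-1}v_{s+\ell}$. Counting then gives $|N[v_{s-1}]|\le k+\ell-1$.

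\textbf{Outer endpoints moving down.} The symmetric step handles $s$ descending from $n-k-1$ or $n-\ell-1$, which are anti-dichords. Here one uses $N[v_1]$, Lemma~\ref{lem:common-end} for paths ending at $v_0$, and the intersection of the two resulting intervals, exactly as in the negative induction of Claim~\ref{lem:L24}. Where the count is tight rather than contradictory, I expect a unique extremal neighbourhood. In that case I will exhibit a specific chord, either of length $\ell+1$ or $k+1$, or an anti-$\ell$/anti-$k$ dichord, and reach a contradiction via Lemmas~\ref{lem:Ll} and~\ref{lem:Ll+1}.

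The main obstacle is the bookkeeping in the induction step. The two intervals $[\ell+1,n-k-1]$ and $[k+1,n-\ell-1]$ may overlap or abut $k$, and the ranges $[\ell+1,s-1]$ used in the hypothesis must genuinely lie inside the already-excluded set. I would first handle the case $n\ge 2k+1$, where the intervals are well separated from $k$. The smallest case $n=k+\ell+2$, where each interval is a single endpoint, is already settled by Lemma~\ref{lem:Ll+1}. In the tight intermediate cases I would fall back on the extremal-neighbourhood argument used in the proofs of Lemmas~\ref{lem:Ll} and~\ref{lem:Ll+1}: define $\alpha$ and $\beta$ for the minimum and maximum positions of neighbours along the complementary arc, and apply Lemma~\ref{lem:rectangle}.
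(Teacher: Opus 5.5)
Your reduction is correct. The lemma is equivalent to excluding $s$-dichords for $s\in[\ell+1,n-k-1]\cup[k+1,n-\ell-1]$, and the four endpoint values are excluded by Lemma~\ref{lem:Ll+1}, so $n\le k+\ell+3$ is immediate. Your ``moving up'' step is essentially sound after two repairs.

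First, the induction hypothesis must exclude \emph{edges}, not just smaller dichords of the same interval. A back chord $v_jv_{s-1}$ may be oriented $(v_{s-1},v_j)$, and that is a dichord whose length lies at the top of the \emph{other} interval.

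Second, take a short dichord with $k=\ell+1$. There $v_{s+\ell}=v_{s+k-1}$ is already removed by Lemma~\ref{lem:common-start}, so your count gives exactly $k+\ell$, not $k+\ell-1$. You must add that either orientation of $v_{s-1}v_{s+\ell-1}$ yields a $C(k,\ell)$ through $(v_0,v_s)$. (For a long dichord, the useful non-edge from Lemma~\ref{lem:Ll+1} is $v_{s-\ell-2}v_{s-1}$, not $v_{s-1}v_{s+\ell}$.)

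The first repair means your four sub-inductions cannot run independently. They collapse into one induction on chord length, and at each step one orientation is routine while the other carries all the difficulty. In the paper's ordering (a long edge $v_0v_r$ with $r\in[k+2,n-\ell-2]$ minimal), the routine orientation $(v_0,v_r)$ is killed through the forced neighbourhood of $v_{r-1}$, essentially as you propose. The hard one is $(v_r,v_0)$.

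That case is the genuine gap, and your suggested tools do not close it. Copying the negative induction of Claim~\ref{lem:L24} with $N[v_1]$ fails for $\ell\ge 3$. Lemma~\ref{lem:common-start} needs $(v_r,v_0)\odot v_0\rbjt{C}v_t$ to have length at least $\ell$, and $t=1$ does not give that. Lemma~\ref{lem:common-end}, Lemma~\ref{lem:Ll+1} and the minimality of $r$ then pin down $N[v_1]=V(v_{n-\ell+1}\rbjt{C}v_{k+1})\setminus\{v_{\ell+2}\}$ exactly. This is tight rather than contradictory: Lemma~\ref{lem:Ll} merely orients $(v_1,v_{k+1})$ and $(v_{n-\ell+1},v_1)$, which are a $k$-dichord and an $\ell$-dichord, so no forbidden chord appears.

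The $\alpha,\beta$ fallback does not transfer directly either. It needs every vertex of the closed cycle to have roughly $k$ neighbours off it. Here $v_0\cdots v_rv_0$ has $r+1\ge k+3$ vertices, so degree counting guarantees only $k+\ell-1-r\le\ell-3$ of them.

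The paper needs two ideas absent from your plan.
\begin{itemize}
\item For $k\ge\ell+2$: $N[v_{r-1}]$ is forced independently of orientation, via Lemma~\ref{lem:rectangle}. $N[v_{r-2}]$ is forced using $(v_r,v_0)$ in Lemma~\ref{lem:common-start}. These neighbourhoods contain $v_{r-\ell-3}$ and $v_{r-\ell-4}$ respectively, and the two crossing edges contradict Lemma~\ref{lem:rectangle}.
\item For $k=\ell+1$: minimality of $r$ plus relabelling excludes every $r$-dichord. Then $N[v_{\ell-1}]=V(v_0\rbjt{C}v_{2\ell-1})\cup\{v_{r+\ell-1}\}$ forces the arc $(v_{r+\ell-1},v_{\ell-1})$. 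The edge $v_{r-1}v_{r+1}$ lies in the forced $N[v_{r-1}]$ and gives a $C(\ell+1,\ell)$ in either orientation. For $(v_{r+1},v_{r-1})$ this is done by rerouting through $v_{r-1}$ at the first index where the arcs between $v_{r-1}$ and $v_{r+1},\dots,v_{r+\ell-1}$ switch from entering to leaving $v_{r-1}$.
\end{itemize}
Neither argument is a chord exclusion via Lemmas~\ref{lem:Ll} and~\ref{lem:Ll+1}, and your proposal supplies no substitute.
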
 

\begin{proof}
By Lemma \ref{lem:Ll+1}, $v_iv_{i+\ell+1}\notin E(G)$. Since $|V(v_{i-\ell}\rbjt{C}v_{i+k}) \setminus \{v_{i+\ell+1}\}|=k+\ell\leq |N[v_i]|$, it suffices to show that $N[v_i]\subseteq V(v_{i-\ell}\rbjt{C}v_{i+k}) \setminus \{v_{i+\ell+1}\}$.

Suppose to the contrary that there exists $v_iv_j\in E(G)$ with $k+1 \le |v_i\rbjt{C}v_j|\le n-\ell-1$. Choose such an edge so that $|v_i\rbjt{C}v_j|$ is as small as possible, say $v_0v_r$. Then $k+1\leq r\leq n-\ell-1$.
Let $n=k+\ell+2+s, s\ge 0$. By Lemma \ref{lem:Ll+1},  $r\notin \{k+1,n-\ell-1\}$, and therefore $k+2\le r\le n-\ell-2=k+s$. Then $s\geq 2$ and $v_0,v_1,\ldots, v_{r+\ell+1}$ are distinct vertices.
To reach the final contradiction, we prove a series of assertions (1.1)--(1.6) as follows.

\textbf{(1.1)} $N[v_{r-1}]=V(v_{r-k-1}\rbjt{C}v_{r+\ell-1})\setminus \{v_{r-\ell-2}\}$ and $(v_{r-1},v_{r+\ell-1}) \in A(D)$.

 By Lemma \ref{lem:Ll+1}, $v_{r-\ell-2}v_{r-1} \notin E(G)$.
By the choice of $v_0v_r$,  $v_{r-1}$ has no neighbors in $V(v_0\rbjt{C}v_{r-k-2})$. For $r+\ell \le i \le n-1$, note that $v_0\rbjt{C}v_{r-1}$ and $v_{r}\rbjt{C}v_{i}$ have lengths at least $r-1 \geq k $ and $\ell$, respectively, and $v_0v_r \in E(G)$, by Lemma \ref{lem:rectangle} we have $v_{r-1}v_i \notin E(G)$. Since $k \geq \ell+1$,  $v_{r-\ell-2} \in V(v_{r-k-1}\rbjt{C}v_{r+\ell-1})$ and hence $N[v_{r-1}] \subseteq V(v_{r-k-1}\rbjt{C}v_{r+\ell-1}) \setminus \{v_{r-\ell-2}\}$, where $|V(v_{r-k-1}\rbjt{C}v_{r+\ell-1}) \setminus \{v_{r-\ell-2}\}|=k+\ell$. Therefore, $N[v_{r-1}] = V(v_{r-k-1}\rbjt{C}v_{r+\ell-1}) \setminus \{v_{r-\ell-2}\}$ and $v_{r-1}v_{r+\ell-1} \in E(G)$.

If $(v_{r+\ell-1},v_{r-1}) \in A(D)$, $v_0\rbjt{C}v_{r-1}$ and $v_{r}\rbjt{C}v_{r+\ell-1} \odot (v_{r+\ell-1},v_{r-1})$ have lengths $r-1 \geq k$ and $\ell$, respectively. By Lemma \ref{lem:common-end},  $v_0v_r \notin E(G)$,
a contradiction. Therefore,  $(v_{r-1},v_{r+\ell-1}) \in A(D)$.

\textbf{(1.2)} $(v_{r},v_0) \in A(D)$. Moreover, $D$ contains no $r$-dichord.

If not, $v_0\rbjt{C}v_{r-1} \odot (v_{r-1},v_{r+\ell-1})$ and $(v_0,v_{r}) \odot v_{r}\rbjt{C}v_{r+\ell-1}$ form a member of $C(k,\ell)$, a contradiction. If $D$ contains an $r$-dichord $(v_j,v_{j+r})$ for some $j$, then we can relabel the Hamiltonian cycle so that this arc is $(v_0,v_r)$, a contradiction. Hence $D$ contains no $r$-dichord.

In (1.3)--(1.4), we  assume that $k \geq \ell+2$.
\vskip 2mm
\textbf{(1.3)}
$N[v_{r-2}]=V(v_{r-k-2}\rbjt{C}v_{r+\ell-2}) \setminus \{v_{r-\ell-3}\}$. 

By Lemma \ref{lem:Ll+1}, $v_{r-\ell-3}v_{r-2},v_{r-2}v_{r+\ell-1}  \notin E(G)$.
By the choice of $v_0v_r$,  $v_{r-2}$ has no neighbors in $V(v_0\rbjt{C}v_{r-k-3})$. If $v_{r-2}$ has neighbor $v_i$ in $V(v_{r+\ell}\rbjt{C}v_{n-1})$, $(v_r,v_0) \odot v_0\rbjt{C}v_{r-2}$ and $v_{r}\rbjt{C}v_{i}$ have lengths  $r-1 \geq k $ and $\ell$, respectively, contradicting Lemma \ref{lem:common-start}. 
Since $k \geq \ell+2$,  $v_{r-\ell-3} \in V(v_{r-k-2}\rbjt{C}v_{r+\ell-2})$ and hence $N[v_{r-2}] \subseteq V(v_{r-k-2}\rbjt{C}v_{r+\ell-2}) \setminus \{v_{r-\ell-3}\}$, where $|V(v_{r-k-2}\rbjt{C}v_{r+\ell-2}) \setminus \{v_{r-\ell-3}\}|=k+\ell$, which implies $N[v_{r-2}] = V(v_{r-k-2}\rbjt{C}v_{r+\ell-2}) \setminus \{v_{r-\ell-3}\}$.

By (1.1) and the argument above, when $k \geq \ell+2$,  $v_{r-\ell-3}v_{r-1},v_{r-\ell-4}v_{r-2} \in E(G)$ (see  Figure \ref{fig:gel+2}). But $v_{r-1}\rbjt{C}v_{r-\ell-4}$ and $v_{r-\ell-3}\rbjt{C}v_{r-2}$ have lengths $n-\ell-3 = k+s-1 \geq k$ and $\ell+1 >\ell$, respectively, which gives a member of $C(k,\ell)$ by Lemma \ref{lem:rectangle}, a contradiction.
\begin{figure}[htbp]
\centering

% ---------- (1.3)----------
\begin{subfigure}[b]{0.45\textwidth}
\centering
\begin{tikzpicture}[
    scale=0.9,
    every node/.style={font=\normalfont\normalsize},
    vertex/.style={
        circle,
        draw=black,
        fill=white,
        line width=0.75pt,
        inner sep=1.5pt
    },
    edge/.style={line width=0.8pt},
    grayedge/.style={line width=0.9pt, gray!70},
    cwmark/.style={
        line width=0.8pt,
        -{Stealth[length=1.8mm,width=1.3mm]}
    },
    midarrow/.style={
        postaction={decorate},
        decoration={
            markings,
            mark=at position 0.5 with {
                \arrow{Stealth[length=2.1mm,width=1.5mm]}
            }
        }
    }
]

\def\R{1.85}

% fixed bounding box for alignment
\path[use as bounding box]
    (-2.45,-2.35) rectangle (2.65,2.35);

% vertices
\coordinate (v0) at (140:\R);
\coordinate (vrlfour) at (64:\R);
\coordinate (vrlthree) at (48:\R);
\coordinate (vrmii) at (6:\R);
\coordinate (vrmi) at (-12:\R);
\coordinate (vr) at (-31:\R);

% circle
\draw[edge] (0,0) circle (\R);

% small clockwise direction marks, placed away from vertices
\draw[cwmark]
    (118:\R)
    arc[start angle=118, end angle=100, radius=\R];

\draw[cwmark]
    (-92:\R)
    arc[start angle=-92, end angle=-110, radius=\R];

% chords
% midpoint arrow from v_r to v_0
\draw[grayedge,midarrow] (vr) -- (v0);

\draw[grayedge] (vrlfour) -- (vrmii);
\draw[grayedge] (vrlthree) -- (vrmi);

% vertices
\node[vertex] at (v0) {};
\node[vertex] at (vrlfour) {};
\node[vertex] at (vrlthree) {};
\node[vertex] at (vrmii) {};
\node[vertex] at (vrmi) {};
\node[vertex] at (vr) {};

% labels
\node[above left=0pt, yshift=-2pt] at (v0) {$v_0$};
\node[above=1pt] at (vrlfour) {$v_{r-\ell-4}$};
\node[above right=0pt,yshift=-4pt] at (vrlthree) {$v_{r-\ell-3}$};
\node[right=1pt,yshift=-1pt] at (vrmii) {$v_{r-2}$};
\node[right=1pt,yshift=-1pt] at (vrmi) {$v_{r-1}$};
\node[xshift=9pt,yshift=-2pt] at (vr) {$v_r$};

\end{tikzpicture}
\caption{(1.3)}
\label{fig:gel+2}
\end{subfigure}
\hspace{0.01\textwidth}
% ---------- (1.4)--(1.5) ----------
\begin{subfigure}[b]{0.45\textwidth}
\centering
\begin{tikzpicture}[
    scale=0.9,
    every node/.style={font=\normalfont\normalsize},
    vertex/.style={
        circle,
        draw=black,
        fill=white,
        line width=0.75pt,
        inner sep=1.5pt
    },
    edge/.style={line width=0.8pt},
    rededge/.style={line width=0.9pt, red},
    grayedge/.style={line width=0.9pt, gray!70},
    cwmark/.style={
        line width=0.8pt,
        -{Stealth[length=1.8mm,width=1.3mm]}
    },
    midarrow/.style={
        postaction={decorate},
        decoration={
            markings,
            mark=at position 0.5 with {
                \arrow{Stealth[length=2.1mm,width=1.5mm]}
            }
        }
    }
]

\def\R{1.85}

% fixed bounding box for alignment
\path[use as bounding box]
    (-2.45,-2.35) rectangle (2.65,2.35);

% vertices
\coordinate (v0) at (140:\R);
\coordinate (velli) at (82:\R);
\coordinate (vrmi) at (0:\R);
\coordinate (vr) at (-15:\R);
\coordinate (vrpi) at (-28:\R);
\coordinate (vrlelli) at (-72:\R);

% circle
\draw[edge] (0,0) circle (\R);

% small clockwise direction marks, placed away from vertices
\draw[cwmark]
    (118:\R)
    arc[start angle=118, end angle=100, radius=\R];

\draw[cwmark]
    (-92:\R)
    arc[start angle=-92, end angle=-110, radius=\R];

% chords
% midpoint arrow from v_r to v_0
\draw[grayedge,midarrow] (vr) -- (v0);

% midpoint arrow from v_{r+\ell-1} to v_{\ell-1}
\draw[grayedge,midarrow] (vrlelli) -- (velli);

% midpoint arrow from v_{r-1} to v_{r+\ell-1}
\draw[grayedge,midarrow] (vrmi) -- (vrlelli);

\draw[rededge] (vrmi) -- (vrpi);

% vertices
\node[vertex] at (v0) {};
\node[vertex] at (velli) {};
\node[vertex] at (vrmi) {};
\node[vertex] at (vr) {};
\node[vertex] at (vrpi) {};
\node[vertex] at (vrlelli) {};

% labels
\node[above left=0pt,yshift=-2pt] at (v0) {$v_0$};
\node[above=1pt] at (velli) {$v_{\ell-1}$};
\node[right=1pt] at (vrmi) {$v_{r-1}$};
\node[right=1pt] at (vr) {$v_r$};
\node[below right=0pt,yshift=2pt] at (vrpi) {$v_{r+1}$};
\node[below right=0pt,xshift=-2pt] at (vrlelli) {$v_{r+\ell-1}$};

\end{tikzpicture}
\caption{(1.4)--(1.5)}
\label{fig:l+1}
\end{subfigure}

\caption{Illustrations for Lemma~\ref{lem:Lkl,r}}
\label{fig:4}
\end{figure}

In (1.4)--(1.5), we assume that $k = \ell+1$.
\vskip 2mm
\textbf{(1.4)} $N[v_{\ell-1}]=V(v_{0}\rbjt{C}v_{2\ell-1}) \cup \{v_{r+\ell-1}\}$, $(v_{r+\ell-1},v_{\ell-1}) \in A(D)$.

By Lemma \ref{lem:Ll+1} and the choice of $v_0v_r$,  $v_{\ell-1}$ has no neighbors in $V(v_{2\ell}\rbjt{C}v_{r+\ell-2})$. If $v_{\ell-1}$ has neighbor $v_{r+\ell}$, since $(v_{r},v_0) \odot v_0\rbjt{C}v_{\ell-1}$ and $v_{r}\rbjt{C}v_{r+\ell}$ both have length  $\ell$, and since any orientation of $v_{\ell-1} v_{r+\ell}$ forces one of the two paths to have length $\ell+1$, and thus creating a $C(\ell+1,\ell)$, a contradiction.
If $v_{\ell-1}$ has neighbor $v_i$ in $V(v_{r+\ell+1}\rbjt{C}v_{n-1})$, $(v_{r},v_0) \odot v_0\rbjt{C}v_{\ell-1}$ and $v_{r}\rbjt{C}v_{i}$ have lengths  $\ell$ and $\ell+1$, respectively, a contradiction.
Hence $N[v_{\ell-1}] \subseteq V(v_{0}\rbjt{C}v_{2\ell-1}) \cup \{v_{r+\ell-1}\}$, where $|V(v_{0}\rbjt{C}v_{2\ell-1}) \cup \{v_{r+\ell-1}\}|=2\ell+1$, which implies $N[v_{\ell-1}] = V(v_{0}\rbjt{C}v_{2\ell-1}) \cup \{v_{r+\ell-1}\}$.
Note that $(v_{\ell-1},v_{r+\ell-1})$ is an $r$-dichord. By (1.2), $(v_{r+\ell-1},v_{\ell-1})\in A(D)$.

\textbf{(1.5)} $v_{r-1}v_{r+1}\notin E(G)$. 

Since $k=\ell+1$, by (1.1), $N[v_{r-1}]=V(v_{r-\ell-1}\rbjt{C}v_{r+\ell-1})$ and $(v_{r-1},v_{r+\ell-1})\in A(D)$. By (1.4), $(v_{r+\ell-1},v_{\ell-1})\in A(D)$. As shown in Figure \ref{fig:l+1}: If $(v_{r-1},v_{r+1})\in A(D)$, then $(v_{r-1},v_r)\odot(v_r,v_0)\odot v_0\rbjt{C}v_{\ell-1}$ and $(v_{r-1},v_{r+1})\odot v_{r+1}\rbjt{C}v_{r+\ell-1}\odot(v_{r+\ell-1},v_{\ell-1})$ form a member of $C(\ell+1,\ell)$, a contradiction; If $(v_{r+1},v_{r-1})\in A(D)$, choose the smallest $i$ with $r+2\le i\le r+\ell-1$ such that $(v_{i-1},v_{r-1}),(v_{r-1},v_i)\in A(D)$. Such a vertex exists because $(v_{r+1},v_{r-1}),(v_{r-1},v_{r+\ell-1})\in A(D)$. Then $v_r\rbjt{C}v_{i-1}\odot(v_{i-1},v_{r-1})\odot(v_{r-1},v_i)\odot v_i\rbjt{C}v_{r+\ell-1}\odot(v_{r+\ell-1},v_{\ell-1})$ and $(v_r,v_0)\odot v_0\rbjt{C}v_{\ell-1}$ form a member of $C(\ell+1,\ell)$, a contradiction.

But (1.5) contradicts $v_{r+1}\in N[v_{r-1}]$ in (1.1).
These contradictions show that $N[v_i]\subseteq V(v_{i-\ell}\rbjt{C}v_{i+k}) \setminus \{v_{i+\ell+1}\}$ for $k\ge \ell+1$, completing the proof.
\end{proof}

\begin{proof}[\bfseries{Proof of Theorem \ref{thm:Ll+1}}]
Assume that $D$ is $C(k,\ell)$-free.
Suppose $k\ge \ell+2$. By Lemma~\ref{lem:boundary-window}, $n\ge k+\ell+2$. Then Lemma~\ref{lem:Lkl,r} indicates
$N[v_i]=V(v_{i-\ell}\rbjt{C}v_{i+k})\setminus\{v_{i+\ell+1}\}$ for every $0\le i\le n-1$.
Moreover, Lemma~\ref{lem:Ll} forbids anti-$\ell$-dichords and anti-$k$-dichords. Hence $(v_0,v_k),(v_{k-1},v_{k+\ell-1})\in A(D)$ (see in Figure \ref{fig:6_1}). Therefore, $v_0\rbjt{C}v_{k-1} \odot (v_{k-1},v_{k+\ell-1})$ and $(v_0,v_k) \odot v_k\rbjt{C}v_{k+\ell-1}$ form a member of $C(k,\ell)$, a contradiction.

\begin{figure}[htbp]
\centering

% ---------- first figure ----------
\begin{subfigure}[b]{0.45\textwidth}
\centering
\begin{tikzpicture}[
    scale=0.9,
    every node/.style={font=\normalfont\normalsize},
    vertex/.style={
        circle,
        draw=black,
        fill=white,
        line width=0.75pt,
        inner sep=1.5pt
    },
    edge/.style={line width=0.8pt},
    rededge/.style={line width=0.9pt, red},
    grayedge/.style={line width=0.9pt, gray!70},
    cwmark/.style={
        line width=0.8pt,
        -{Stealth[length=1.8mm,width=1.3mm]}
    },
    midarrow/.style={
        postaction={decorate},
        decoration={
            markings,
            mark=at position 0.5 with {
                \arrow{Stealth[length=2.1mm,width=1.5mm]}
            }
        }
    }
]

\def\R{1.85}

% fixed bounding box for alignment
\path[use as bounding box]
    (-2.45,-2.35) rectangle (2.65,2.35);

% vertices
\coordinate (v0) at (138:\R);
\coordinate (vlmi) at (42:\R);
\coordinate (vl) at (30:\R);
\coordinate (vklli) at (-58:\R);

% circle
\draw[edge] (0,0) circle (\R);

% chords
% midpoint arrow from v_0 to v_k
\draw[grayedge,midarrow] (v0) -- (vl);

% midpoint arrow from v_{k-1} to v_{k+\ell-1}
\draw[grayedge,midarrow] (vlmi) -- (vklli);

% vertices
\node[vertex] at (v0) {};
\node[vertex] at (vlmi) {};
\node[vertex] at (vl) {};
\node[vertex] at (vklli) {};

% small clockwise direction marks, drawn on top
\draw[cwmark]
    (112:\R)
    arc[start angle=112, end angle=94, radius=\R];

\draw[cwmark]
    (-12:\R)
    arc[start angle=-12, end angle=-30, radius=\R];

% labels
\node[above left=0pt,yshift=-4pt] at (v0) {$v_0$};

% moved further upward
\node[
    anchor=south,
    inner sep=0pt,
    xshift=1pt,
    yshift=6pt
] at (vlmi) {$v_{k-1}$};

\node[right=1pt] at (vl) {$v_k$};
\node[below right=0pt,yshift=1pt] at (vklli) {$v_{k+\ell-1}$};

\end{tikzpicture}
\caption{2-dichord, 3-dichord}
\label{fig:6_1}
\end{subfigure}
\hspace{0.01\textwidth}
% ---------- second figure ----------
\begin{subfigure}[b]{0.45\textwidth}
\centering
\begin{tikzpicture}[
    scale=0.9,
    every node/.style={font=\normalfont\normalsize},
    vertex/.style={
        circle,
        draw=black,
        fill=white,
        line width=0.75pt,
        inner sep=1.5pt
    },
    edge/.style={line width=0.8pt},
    rededge/.style={line width=0.9pt, red},
    grayedge/.style={line width=0.9pt, gray!70},
    cwmark/.style={
        line width=0.8pt,
        -{Stealth[length=1.8mm,width=1.3mm]}
    },
    midarrow/.style={
        postaction={decorate},
        decoration={
            markings,
            mark=at position 0.5 with {
                \arrow{Stealth[length=2.1mm,width=1.5mm]}
            }
        }
    }
]

\def\R{1.85}

% fixed bounding box for alignment
\path[use as bounding box]
    (-2.45,-2.35) rectangle (2.65,2.35);

% vertices
\coordinate (v0) at (138:\R);
\coordinate (vlmi) at (42:\R);
\coordinate (vl) at (30:\R);
\coordinate (v2lmii) at (-42:\R);
\coordinate (v2lmi) at (-58:\R);
\coordinate (v2l) at (-76:\R);
\coordinate (v2lp) at (-100:\R);

% circle
\draw[edge] (0,0) circle (\R);

% chords
% midpoint arrow from v_0 to v_\ell
\draw[grayedge,midarrow] (v0) -- (vl);

% midpoint arrow from v_{\ell-1} to v_{2\ell-1}
\draw[grayedge,midarrow] (vlmi) -- (v2lmi);

\draw[rededge] (v2lp) -- (v2lmii);
\draw[rededge] (v2l) -- (v2lmii);

% vertices
\node[vertex] at (v0) {};
\node[vertex] at (vlmi) {};
\node[vertex] at (vl) {};
\node[vertex] at (v2lmii) {};
\node[vertex] at (v2lmi) {};
\node[vertex] at (v2l) {};
\node[vertex] at (v2lp) {};

% small clockwise direction marks, drawn on top
\draw[cwmark]
    (112:\R)
    arc[start angle=112, end angle=94, radius=\R];

\draw[cwmark]
    (-12:\R)
    arc[start angle=-12, end angle=-30, radius=\R];

% labels
\node[above left=0pt,yshift=-4pt] at (v0) {$v_0$};

% moved further upward
\node[
    anchor=south,
    inner sep=0pt,
    xshift=1pt,
    yshift=6pt
] at (vlmi) {$v_{\ell-1}$};

\node[right=1pt] at (vl) {$v_\ell$};
\node[right=1pt,yshift=-3pt] at (v2lmii) {$v_{2\ell-2}$};
\node[below right=0pt,yshift=2pt] at (v2lmi) {$v_{2\ell-1}$};
\node[below=1pt] at (v2l) {$v_{2\ell}$};

\node[
    anchor=north east,
    inner sep=0pt,
    xshift=3pt,
    yshift=-3pt
] at (v2lp) {$v_{2\ell+1}$};

\end{tikzpicture}
\caption{2-dichord, 3-dichord}
\label{fig:6_2}
\end{subfigure}
\caption{Illustrations for Theorem~\ref{thm:Ll+1}}
\label{fig:5}
\end{figure}
Now we suppose $k=\ell+1$. By Lemma~\ref{lem:boundary-window} when $n=k+\ell+1$, and by
Lemma~\ref{lem:Lkl,r} when $n\ge k+\ell+2$, for every $0\leq i\leq n-1$, $v_i$ is adjacent to $v_{i-\ell}$,
$v_{i+2}$, $v_{i+3}$, $v_{i+\ell}$. 
We claim that $C$ contains neither  $2$-dichords nor  $3$-dichords.
By cyclic symmetry, suppose to the contrary that $(v_{2\ell-2},v_{2\ell-2+t})\in A(D)$ with $t\in\{2,3\}$. Since $n\geq k+\ell+1=2\ell+2$, we have $2\ell-2+t\leq n-1$.
By Lemma~\ref{lem:Ll+1} when $n=k+\ell+1$, $C$ contains no $(k+1)$-dichords, i.e., anti-$\ell$-dichords, and by Lemma~\ref{lem:Ll} when $n\ge k+\ell+2$, $C$ contains no anti-$\ell$-dichords. Hence $(v_0,v_\ell),(v_{\ell-1},v_{2\ell-1})\in A(D)$. As shown in Figure \ref{fig:6_2}: $v_0\rbjt{C}v_{\ell-1}\odot(v_{\ell-1},v_{2\ell-1})\odot v_{2\ell-1}\rbjt{C}v_{2\ell-2+t}$ and $(v_0,v_\ell)\odot v_\ell\rbjt{C}v_{2\ell-2}\odot(v_{2\ell-2},v_{2\ell-2+t})$ have lengths at least $\ell+1$ and $\ell$, respectively, they form a member of $C(\ell+1,\ell)$, a contradiction. Hence, no 2-dichords or 3-dichords exist. It follows that $(v_{i+2},v_i),(v_{i+3},v_i)\in A(D)$ for every $0\leq i\leq n-1$.

%We show that $C$ contains neither 2-dichords nor 3-dichords. Suppose to the contrary that $(v_{2\ell-2},v_{2\ell-2+t})\in A(D)$ with $t\in\{2,3\}$. Since $n\geq k+\ell+1=2\ell+2$, we have $2\ell-2+t\leq n-1$.
%When $n=2\ell+2$, by Lemma~\ref{lem:boundary-window}, $N[v_i]=V(v_{i-\ell}\rbjt{C}v_{i+k-1})=V(v_{i-\ell}\rbjt{C}v_{i+\ell})$ for $0\leq i\leq n-1$. By Lemma~\ref{lem:Ll+1}, $C$ contains no $(k+1)$-dichords, i.e. anti-$\ell$-dichords. When $n\geq2\ell+3$, by Lemma~\ref{lem:Lkl,r}, $N[v_i]=V(v_{i-\ell}\rbjt{C}v_{i+\ell})$ for $0\leq i\leq n-1$. By Lemma~\ref{lem:Ll}, $C$ contains no anti-$\ell$-dichords. Hence $(v_0,v_\ell),(v_{\ell-1},v_{2\ell-1})\in A(D)$.
%Since $v_0\rbjt{C}v_{\ell-1}\odot(v_{\ell-1},v_{2\ell-1})\odot v_{2\ell-1}\rbjt{C}v_{2\ell-2+t}$ and $(v_0,v_\ell)\odot v_\ell\rbjt{C}v_{2\ell-2}\odot(v_{2\ell-2},v_{2\ell-2+t})$ have lengths at least $\ell+1$ and $\ell$, respectively, they form a member of $C(\ell+1,\ell)$, a contradiction. Hence no 2-dichords or 3-dichords exist.

Since $n\geq 2\ell+2$, the directed path $P_1=v_{2\ell+1}\rbjt{C}v_{\ell-1}$ has length $n-(2\ell+1)+(\ell-1)=n-\ell-2\geq\ell$.
We next construct a second directed path from $v_{2\ell+1}$ to $v_{\ell-1}$ whose internal vertices lie in $V(v_\ell\rbjt{C}v_{2\ell})$. Note that $|V(v_{\ell-1}\rbjt{C}v_{2\ell+1})|=\ell+3$.

If $\ell+1\equiv 0\pmod 3$, let
\[P_2=v_{2\ell+1}v_{2\ell-1}v_{2\ell}v_{2\ell-2}\cdots v_{\ell+3j}v_{\ell+3j-2}v_{\ell+3j-1}v_{\ell+3j-3}\cdots v_{\ell+3}v_{\ell+1}v_{\ell+2}v_{\ell-1}.\]

If $\ell+1\equiv 1\pmod 3$, let
\[P_2=v_{2\ell+1}v_{2\ell-1}v_{2\ell}v_{2\ell-2}\cdots v_{\ell+3j+1}v_{\ell+3j-1}v_{\ell+3j}v_{\ell+3j-2}\cdots v_{\ell+1}v_{\ell-1}.\]

If $\ell+1\equiv 2\pmod 3$, let
\[P_2=v_{2\ell+1}v_{2\ell-1}v_{2\ell}v_{2\ell-2}\cdots v_{\ell+3j+2}v_{\ell+3j}v_{\ell+3j+1}v_{\ell+3j-1}\cdots v_{\ell+2}v_{\ell}v_{\ell+1}v_{\ell-1}.\]

It is easy to check that 
$V(v_{\ell-1}\rbjt{C}v_{2\ell+1})\backslash V(P_2)=\{v_\ell\}$ when $\ell+1\not\equiv 2\pmod{3}$, and  $V(v_{\ell-1}\rbjt{C}v_{2\ell+1})=V(P_2)$ when $\ell+1\equiv 2 \pmod 3$. Thus, in all cases, $|P_2|\geq \ell+1$.
Hence $P_1$ and $P_2$ form a member of $C(\ell+1,\ell)$, contradicting the assumption that $D$ is $C(\ell+1,\ell)$-free.
\end{proof}

In the remainder of this section, we prove Theorem \ref{thm:Ll}. For each $0\le i\le n-1$, let $F_i=v_iv_{i+\ell}$, where the subscripts are taken modulo $n$.

\begin{lemma}\label{lem:antil}
Let $\ell\ge3$ and $n\ge2\ell+1$. If $\delta(G)\ge2\ell-1$ and $D$ is $C(\ell,\ell)$-free, then for each $0\le i\le n-1$,  $F_i$ and $F_{i+\ell-1}$ cannot both exist.
\end{lemma}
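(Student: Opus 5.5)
The plan is to place the two chords in crossing position on $C$ and apply Lemma~\ref{lem:boundary-crossing}. That lemma leaves only two exceptional orientations, and in each of them one arc is an anti-$\ell$-dichord, which the earlier lemmas forbid.

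By cyclic symmetry take $i=0$, and suppose to the contrary that $F_0=v_0v_\ell$ and $F_{\ell-1}=v_{\ell-1}v_{2\ell-1}$ are both edges of $G$.

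\textbf{Setting up the crossing.} Since $\ell\ge3$ and $n\ge 2\ell+1$, the vertices $v_0,v_{\ell-1},v_\ell,v_{2\ell-1}$ are distinct and occur in this cyclic order on $C$. Both edges are chords, that is, not in $E(C)$: each has $\theta_C\ge \ell\ge 3$, because the complementary arcs have lengths $n-\ell\ge \ell+1$. We apply Lemma~\ref{lem:boundary-crossing} with $k=\ell$ and
\[
(u,v,x,y)=(v_0,v_\ell,v_{\ell-1},v_{2\ell-1}).
\]
This choice gives $x\in V(u\rbjt{C}v)$ and $y\in V(v\rbjt{C}u)$. Moreover $|u\rbjt{C}x|=\ell-1=k-1$ and $|v\rbjt{C}y|=\ell-1$. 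Since $D$ is $C(\ell,\ell)$-free, one of the two exceptional configurations must occur:
\begin{enumerate}[label=(\alph*)]
\item $(v_0,v_\ell)\in A(D)$ and $(v_{2\ell-1},v_{\ell-1})\in A(D)$;
\item $(v_\ell,v_0)\in A(D)$ and $(v_{\ell-1},v_{2\ell-1})\in A(D)$.
\end{enumerate}

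\textbf{Excluding (a) and (b).} In (a), the arc $(v_{2\ell-1},v_{\ell-1})$ satisfies $|v_{2\ell-1}\rbjt{C}v_{\ell-1}|=n-\ell$, so it is an anti-$\ell$-dichord. In (b), the arc $(v_\ell,v_0)$ is likewise an anti-$\ell$-dichord. It therefore suffices to show that $C$ has no anti-$\ell$-dichord, and we split on $n$.
\begin{itemize}
\item If $n\ge 2\ell+2=k+\ell+2$, Lemma~\ref{lem:Ll} with $k=\ell\ge3$ forbids anti-$\ell$-dichords directly.
\item If $n=2\ell+1$, an anti-$\ell$-dichord has forward length $n-\ell=\ell+1$, so it is an $(\ell+1)$-dichord. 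These are excluded by Lemma~\ref{lem:Ll+1} with $k=\ell\ge3$, whose hypothesis $n\ge k+\ell+1$ holds.
\end{itemize}
Both cases contradict (a) and (b), so $F_0$ and $F_{\ell-1}$ cannot both exist.

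\textbf{Expected obstacle.} The only real care needed is at the boundary $n=2\ell+1$, where Lemma~\ref{lem:Ll} no longer applies and the anti-$\ell$-dichord has to be recognised as an $(\ell+1)$-dichord. One should also check that the hypotheses of Lemma~\ref{lem:boundary-crossing} hold with equality, namely lengths exactly $k-1$ and $\ell-1$. That equality is exactly why the exceptional cases survive the lemma, and why they must then be killed separately.
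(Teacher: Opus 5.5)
Your proof is correct and follows the paper's argument. The paper also excludes anti-$\ell$-dichords using Lemma~\ref{lem:Ll+1} when $n=2\ell+1$ and Lemma~\ref{lem:Ll} when $n\ge 2\ell+2$. The only difference is cosmetic: the paper fixes the orientations $(v_i,v_{i+\ell})$ and $(v_{i+\ell-1},v_{i+2\ell-1})$ and writes down the two length-$\ell$ paths from $v_i$ to $v_{i+2\ell-1}$ directly, whereas you reach the same configuration through Lemma~\ref{lem:boundary-crossing}.
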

\begin{proof}

When $n=2\ell+1$, $(v_{i+\ell},v_i)$ would be an
$(\ell+1)$-dichord, which is forbidden by Lemma~\ref{lem:Ll+1}. When
$n\ge2\ell+2$, $(v_{i+\ell},v_i)$ would be an anti-$\ell$-dichord,
which is forbidden by Lemma~\ref{lem:Ll}. Thus, whenever $F_i$ exists, it must
be oriented as $(v_i,v_{i+\ell})$.

If $F_i,F_{i+\ell-1}$ both exist, the paths
$v_i\rbjt{C}v_{i+\ell-1} \odot (v_{i+\ell-1},v_{i+2\ell-1})$ and $(v_i,v_{i+\ell}) \odot v_{i+\ell}\rbjt{C}v_{i+2\ell-1}$
form a member of $C(\ell,\ell)$, a contradiction.
\end{proof}

We call an edge $v_iv_j\in E(G)$ \emph{long} if $\theta(v_i,v_j)\geq \ell+1$. Otherwise, call it \emph{local}. If $v_iv_j$ is a long (resp. local) edge, we also say that $v_j$ is a long (resp. local) neighbor of $v_i$.

\begin{lemma}\label{lem:bal-no-long}
Let $\ell\ge3$ and $n\ge2\ell+2$. If $\delta(G)\ge2\ell-1$ and $D$ is $C(\ell,\ell)$-free, then $G$ contains no long edge, i.e., $N[v_i]\subseteq V(v_{i-\ell}\rbjt{C}v_{i+\ell})$ for every $0\leq i\leq n-1$.
\end{lemma}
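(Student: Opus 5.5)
We mimic the proof of Lemma~\ref{lem:Lkl,r} in the balanced case $k=\ell$. Suppose to the contrary that a long edge exists. Among all long edges choose one minimizing $|v_i\rbjt{C}v_j|$ subject to $\ell+1\le |v_i\rbjt{C}v_j|\le n-\ell-1$; by cyclic symmetry, call it $v_0v_r$ with $\ell+1\le r\le n-\ell-1$. Lemma~\ref{lem:Ll+1} forbids $(\ell+1)$-dichords when $n\ge 2\ell+1$ and anti-$(\ell+1)$-dichords when $n\ge 2\ell+2$, so $r\notin\{\ell+1,n-\ell-1\}$. Writing $n=2\ell+2+s$, this gives $\ell+2\le r\le \ell+s$, hence $s\ge 2$ and $v_0,\dots,v_{r+\ell+1}$ are distinct.

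\textbf{Step 1: determine $N[v_{r-1}]$.} By minimality of $r$, $v_{r-1}$ has no neighbor in $V(v_0\rbjt{C}v_{r-\ell-2})$. For $r+\ell\le i\le n-1$, Lemma~\ref{lem:rectangle} applies to $v_0\rbjt{C}v_{r-1}$ (length $r-1\ge\ell+1$) and $v_r\rbjt{C}v_i$ (length $\ge\ell$) together with the edge $v_0v_r$, so $v_{r-1}v_i\notin E(G)$. Hence $N[v_{r-1}]\subseteq V(v_{r-\ell-1}\rbjt{C}v_{r+\ell-1})$, a set of exactly $2\ell+1$ vertices. Since $|N[v_{r-1}]|\ge 2\ell$, at most one vertex of this window is missing. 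We will also use that $v_{r-1}v_{r-\ell-2}$ and $v_{r-1}v_{r+\ell}$ are excluded.

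\textbf{Step 2: orient the relevant arcs.} Consider the edges $v_{r-1}v_{r+\ell-1}$ and $v_{r-1}v_{r-\ell-1}$; both are $F$-type edges, and at least one is present. By the first part of the proof of Lemma~\ref{lem:antil}, each $F_i$ that exists is oriented $(v_i,v_{i+\ell})$. If $(v_{r-1},v_{r+\ell-1})\in A(D)$, then orienting $v_0v_r$ as $(v_0,v_r)$ produces the $C(\ell,\ell)$ formed by $v_0\rbjt{C}v_{r-1}\odot(v_{r-1},v_{r+\ell-1})$ and $(v_0,v_r)\odot v_r\rbjt{C}v_{r+\ell-1}$. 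So in that case $(v_r,v_0)\in A(D)$, and then $D$ has no $r$-dichord at all, as in (1.2).

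Symmetrically, running the same argument at $v_{r+1}$ (the reversed picture, using $v_r\rbjt{C}v_0$) gives $N[v_{r+1}]\subseteq V(v_{r-\ell+1}\rbjt{C}v_{r+\ell+1})$. From $F_{r+1}=(v_{r+1},v_{r+\ell+1})$ or $F_{r-\ell+1}$ we want to derive a path structure forcing $(v_0,v_r)$, contradicting $(v_r,v_0)$. The natural candidate is to combine $(v_{r-\ell+1},v_{r+1})$ with $(v_0,v_r)$ via Lemma~\ref{lem:common-end}-type paths.

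\textbf{Main obstacle.} Pinning down exactly which window edge is missing in each of $N[v_{r-1}]$ and $N[v_{r+1}]$, and reconciling the result with Lemma~\ref{lem:antil}: $F_{r-\ell-1}$ and $F_{r-2}$ cannot both exist, so within each window one $F$-type edge is the forced non-edge, and this should propagate a rigid neighborhood structure. I would try first to show that $N[v_{r-1}]=V(v_{r-\ell-1}\rbjt{C}v_{r+\ell-1})\setminus\{v_{r-\ell-1}\}$ and $N[v_{r+1}]=V(v_{r-\ell+1}\rbjt{C}v_{r+\ell+1})\setminus\{v_{r+\ell+1}\}$. Then the arcs $(v_{r-1},v_{r+\ell-1})$ and $(v_{r-\ell+1},v_{r+1})$ both exist, and together with $v_0v_r$ either orientation yields two internally disjoint paths of length $\ge\ell$ (using the segments $v_0\rbjt{C}v_{r-\ell+1}$ and $v_{r+\ell-1}\rbjt{C}v_0$, which have length $\ge\ell$ since $s\ge2$), giving a $C(\ell,\ell)$. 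If the window degeneracy does not work out cleanly, the fallback is a short detour argument as in (1.5): find an index where the orientation between $v_{r\pm1}$ and its window neighbors switches, and splice it into a path of length $\ell$.
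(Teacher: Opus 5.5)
\textbf{Verdict: there is a genuine gap.} Your Step~1 and the first half of Step~2 are correct, but the proposal is not a proof: it has two gaps, and the second one carries the whole lemma.

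\textbf{First gap: the orientation of $v_0v_r$.} You only settle it when $F_{r-1}\in E(G)$. The remaining case is exactly the dangerous one. If $(v_0,v_r)\in A(D)$, then Lemma~\ref{lem:common-start}, applied to $v_0\rbjt{C}v_{r-1}$ and $(v_0,v_r)\odot v_r\rbjt{C}v_{r+\ell-1}$, already excludes $F_{r-1}$. So your case split says nothing there. The paper handles this case directly:
\begin{itemize}
\item under $(v_0,v_r)$ the window shrinks to $N[v_{r-1}]=V(v_{r-\ell-1}\rbjt{C}v_{r+\ell-2})$, which forces $(v_{r-1},v_{r+\ell-2})\in A(D)$;
\item minimality and Lemma~\ref{lem:common-end} give $N[v_1]=V(v_{n-\ell+2}\rbjt{C}v_{\ell+1})$;
\item Lemma~\ref{lem:common-end}, applied to $v_1\rbjt{C}v_{r-1}\odot(v_{r-1},v_{r+\ell-2})$ and $(v_{n-1},v_0)\odot(v_0,v_r)\odot v_r\rbjt{C}v_{r+\ell-2}$, then removes $v_{n-1}$ from $N[v_1]$, a contradiction.
\end{itemize}
For the same reason your claim ``no $r$-dichord at all'' does not follow. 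Relabeling an $r$-dichord as $(v_0,v_r)$ gives a contradiction only if $F_{r-1}$ exists in the new labeling, and you have not shown that.

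\textbf{Second gap: the bound on $N[v_{r+1}]$.} The inclusion $N[v_{r+1}]\subseteq V(v_{r-\ell+1}\rbjt{C}v_{r+\ell+1})$ does not follow from ``the same argument''. The configuration is not symmetric about $v_r$: its symmetry (reverse all arcs and relabel $v_j\mapsto v_{r-j}$) swaps $v_0$ and $v_r$ and sends $v_{r-1}$ to $v_1$, not to $v_{r+1}$. For $v_{r+1}$:
\begin{itemize}
\item minimality of $r$ only forbids neighbors at cyclic distance $\ell+1,\dots,r-1$;
\item Lemma~\ref{lem:rectangle} with $v_0v_r$ only forbids $v_1,\dots,v_{r-\ell}$.
\end{itemize}
A neighbor $v_j$ with both $|v_{r+1}\rbjt{C}v_j|\ge r$ and $|v_j\rbjt{C}v_{r+1}|\ge r$ is excluded by neither, because the arc $v_r\rbjt{C}v_0$ may be much longer than $r$. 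For instance $v_0$ is such a vertex once $n\ge 2r+1$, and it lies outside your window.

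Worse, once $(v_r,v_0)\in A(D)$, the arc $(v_{r-\ell+1},v_{r+1})$ you are aiming for is itself impossible. The paths $v_{r-\ell+1}\rbjt{C}v_r\odot(v_r,v_0)$ and $(v_{r-\ell+1},v_{r+1})\odot v_{r+1}\rbjt{C}v_0$ form a member of $C(\ell,\ell)$. So producing that arc is no easier than the lemma, and your ``main obstacle'' is the whole problem. Separately, $v_0\rbjt{C}v_{r-\ell+1}$ has length $r-\ell+1$, which need not be $\ge\ell$.

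\textbf{The missing idea: global control of the long arc $W=v_{r+1}\cdots v_{n-1}$.} The paper gets this from the directed cycle $Z=v_0v_1\cdots v_rv_0$, as follows.
\begin{itemize}
\item Let $X_i$ index the neighbors of $v_i$ in $W$. Lemma~\ref{lem:rectangle} along $Z$ gives $\max X_{i-j}\le\min X_i+\ell-1$ for $1\le j\le r-\ell+1$.
\item Separate arguments give $F_{n-1}\notin E(G)$, $N[v_{\ell-1}]=V(v_0\rbjt{C}v_{2\ell-1})$, and, via Lemma~\ref{lem:antil}, $r\le 2\ell-2$. The last bound makes every $X_i$ nonempty.
\item With Lemmas~\ref{lem:common-end} and~\ref{lem:boundary-crossing}, these facts pin down $X_{\ell-1}$ as an initial segment and $X_{r-\ell+1}$ as a terminal segment of $W$.
\item Propagating the bounds and counting non-neighbors of $w_1$ and $w_m=v_{n-1}$ against $\delta(G)\ge 2\ell-1$ forces $v_0w_1,v_rw_m\in E(G)$.
\item This contradicts Lemma~\ref{lem:rectangle} for the paths $w_1\rbjt{C}w_m$ and $v_0\rbjt{C}v_r$.
\end{itemize}
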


\begin{proof}
Suppose to the contrary that there exists $v_iv_j\in E(G)$ with $\ell+1 \le |v_i\rbjt{C}v_j| \le n-\ell-1$. Choose such an edge so that $|v_i\rbjt{C}v_j|$ is as small as possible, say $v_0v_r$. Then $\ell+1\leq r\leq n-\ell-1$ and  $\ell+1 \le n-r \le n-\ell-1$, and by the minimality of $r$, we have $r\le n-r$. By Lemma~\ref{lem:Ll+1}, $\ell+2\le r\le n-\ell-2$.
Let $r=\ell+s$. Then  $2\leq s\le n-2\ell-2$.
Since $r \le n-\ell-2$, $v_0,v_1,\ldots,v_{r+\ell+1}$ are distinct vertices. To reach the final contradiction, we prove a series of assertions (2.1)--(2.3) as follows.

\medskip  
\textbf{(2.1)} $(v_r,v_0)\in A(D)$. Moreover, $C$ contains no $r$-dichord.

Suppose to the contrary that $(v_0,v_r)\in A(D)$. Consider $N[v_{r-1}]$. If $0\le j\le r-\ell-2$ and $v_jv_{r-1}\in E(G)$, then $\ell+1\le r-1-j\le r-1$, contradicting the choice of $r$. If $j\ge r+\ell-1$ , then $v_0\rbjt{C}v_{r-1}$ and $(v_0,v_r) \odot v_r\rbjt{C}v_j$ have lengths at least $r-1\ge \ell$ and $\ell$, respectively. By Lemma \ref{lem:common-start}, $v_jv_{r-1}\notin E(G)$. Thus $N[v_{r-1}]\subseteq V(v_{r-\ell-1}\rbjt{C}v_{r+\ell-2})$, and hence $N[v_{r-1}]=V(v_{r-\ell-1}\rbjt{C}v_{r+\ell-2})$. In particular, $v_{r-1}v_{r+\ell-2}\in E(G)$. If $(v_{r+\ell-2},v_{r-1})\in A(D)$, then $v_0\rbjt{C}v_{r-1}$ and $(v_0,v_r)\odot v_r\rbjt{C}v_{r+\ell-2}\odot (v_{r+\ell-2},v_{r-1})$ form a member of $C(\ell,\ell)$, a contradiction. Hence $(v_{r-1},v_{r+\ell-2})\in A(D)$.

Next we determine $N[v_1]$. By the choice of $v_0v_r$,  $v_1v_j\notin E(G)$ for $\ell+2\le j\le r$. If $r+1\le j\le n-\ell+1$, note that $v_j\rbjt{C}v_0\odot (v_0,v_r)$ and $v_1\rbjt{C}v_r$ have lengths $n-j+1\ge \ell$ and $r-1\ge \ell$, respectively. By Lemma \ref{lem:common-end},  $v_1v_j\notin E(G)$. Then $N[v_1]\subseteq V(v_{n-\ell+2}\rbjt{C}v_{\ell+1})$, where $|V(v_{n-\ell+2}\rbjt{C}v_{\ell+1})|=2\ell$. Thus $N[v_1]=V(v_{n-\ell+2}\rbjt{C}v_{\ell+1})$.

Now $v_1\rbjt{C}v_{r-1}\odot (v_{r-1},v_{r+\ell-2})$ and $(v_{n-1},v_0)\odot (v_0,v_r)\odot v_r\rbjt{C}v_{r+\ell-2}$ have lengths $r-1\ge \ell$ and $\ell$, respectively. By Lemma \ref{lem:common-end}, $v_1v_{n-1}\notin E(G)$, contradicting $N[v_1]=V(v_{n-\ell+2}\rbjt{C}v_{\ell+1})$. Therefore $(v_0,v_r)\notin A(D)$ and $(v_r,v_0)\in A(D)$.

If $(v_j,v_{j+r})\in A(D)$ for some $j$, we can relabel the Hamiltonian cycle so that this arc is $(v_0,v_r)$. The argument above gives the same contradiction. Hence $C$ contains no $r$-dichord.

\medskip
\textbf{(2.2)} $F_{n-1}\notin E(G)$ and $N[v_{\ell-1}]=V(v_0\rbjt{C}v_{2\ell-1})$.

If $F_{n-1}$ exists,  by Lemma \ref{lem:Ll},  $(v_{n-1},v_{\ell-1})\in A(D)$. Then $(v_r,v_0)\odot v_0\rbjt{C}v_{\ell-1}$ and $v_r\rbjt{C}v_{n-1}\odot (v_{n-1},v_{\ell-1})$ form a member of $C(\ell,\ell)$, a contradiction. Thus $F_{n-1}\notin E(G)$ and $v_{n-1}\notin N[v_{\ell-1}]$.

Suppose $v_{\ell-1}v_j\in E(G)$ with $2\ell\le j \le n-2$. Since $\ell+1\le j-\ell+1\le n-\ell-1$, by the choice of $v_0v_r$,  $r\le j-\ell+1$. Then $r<r+\ell-1\le j$. Since  $\ell-1<r<j<n-1$, $v_0,v_{\ell-1},v_r,v_j$ appear in this order, as shown in Figure \ref{fig:8_1}. Since $|v_0\rbjt{C}v_{\ell-1}|=\ell-1$ and $|v_r\rbjt{C}v_j|=j-r\ge \ell-1$, by Lemma \ref{lem:boundary-crossing} and $(v_r,v_0)\in A(D)$, $D$ is  $C(\ell,\ell)$-free  only if $|v_r\rbjt{C}v_j|=\ell-1$ and $(v_{\ell-1},v_j)\in A(D)$. Hence $j-\ell+1=r$, which implies $(v_{\ell-1},v_j)$ is an $r$-dichord, contradicting (2.1). Hence  $N[v_{\ell-1}]\subseteq V(v_0\rbjt{C}v_{2\ell-1})$ and  $N[v_{\ell-1}]=V(v_0\rbjt{C}v_{2\ell-1})$. %In particular, $F_{\ell-1}\in E(G)$.

\begin{figure}[htbp]
\centering

% ---------- (2.2) ----------
\begin{subfigure}[b]{0.31\textwidth}
\centering
\begin{tikzpicture}[
    scale=1.25,
    every node/.style={font=\normalfont\normalsize},
    vertex/.style={
        circle,
        draw=black,
        fill=white,
        line width=0.75pt,
        inner sep=1.5pt
    },
    edge/.style={line width=0.65pt},
    grayedge/.style={line width=0.75pt, gray!70},
    cwmark/.style={
        line width=0.65pt,
        -{Stealth[length=1.5mm,width=1.05mm]}
    },
    midarrow/.style={
        postaction={decorate},
        decoration={
            markings,
            mark=at position 0.5 with {
                \arrow{Stealth[length=1.8mm,width=1.25mm]}
            }
        }
    }
]

\def\R{1.35}

\path[use as bounding box] (-1.85,-1.75) rectangle (2.05,1.75);

% vertices
\coordinate (v0) at (135:\R);
\coordinate (velli) at (88:\R);
\coordinate (vr) at (0:\R);
\coordinate (v2ellmi) at (-14:\R);
\coordinate (vj) at (-72:\R);

% circle
\draw[edge] (0,0) circle (\R);

% chords
\draw[grayedge,midarrow] (vr) -- (v0);
\draw[grayedge] (velli) -- (vj);

% vertices
\node[vertex] at (v0) {};
\node[vertex] at (velli) {};
\node[vertex] at (vr) {};
\node[vertex] at (v2ellmi) {};
\node[vertex] at (vj) {};

% clockwise direction marks, drawn on top
\draw[cwmark] (78:\R) arc[start angle=78, end angle=61, radius=\R];
\draw[cwmark] (-105:\R) arc[start angle=-105, end angle=-122, radius=\R];

% labels
\node[above left=0pt,yshift=-2pt] at (v0) {$v_0$};
\node[above=1pt] at (velli) {$v_{\ell-1}$};
\node[right=1pt] at (vr) {$v_r$};
\node[right=1pt,yshift=-2pt] at (v2ellmi) {$v_{2\ell-1}$};
\node[below right=0pt,xshift=-2pt] at (vj) {$v_j$};

\end{tikzpicture}
\caption{}
\label{fig:8_1}
\end{subfigure}
\hfill
% ---------- (2.4) ----------
\begin{subfigure}[b]{0.31\textwidth}
\centering
\begin{tikzpicture}[
    scale=1.25,
    every node/.style={font=\normalfont\normalsize},
    vertex/.style={
        circle,
        draw=black,
        fill=white,
        line width=0.75pt,
        inner sep=1.5pt
    },
    edge/.style={line width=0.65pt},
    grayedge/.style={line width=0.75pt, gray!70},
    cwmark/.style={
        line width=0.65pt,
        -{Stealth[length=1.5mm,width=1.05mm]}
    },
    midarrow/.style={
        postaction={decorate},
        decoration={
            markings,
            mark=at position 0.5 with {
                \arrow{Stealth[length=1.8mm,width=1.25mm]}
            }
        }
    }
]

\def\R{1.35}

\path[use as bounding box] (-1.85,-1.75) rectangle (2.05,1.75);

% vertices
\coordinate (v0) at (140:\R);
\coordinate (vij) at (72:\R);
\coordinate (vi) at (42:\R);
\coordinate (vr) at (10:\R);
\coordinate (alphai) at (-42:\R);
\coordinate (betaij) at (-140:\R);

% circle
\draw[edge] (0,0) circle (\R);

% chords
\draw[grayedge,midarrow] (vr) -- (v0);
\draw[grayedge] (vij) -- (betaij);
\draw[grayedge] (vi) -- (alphai);

% vertices
\node[vertex] at (v0) {};
\node[vertex] at (vij) {};
\node[vertex] at (vi) {};
\node[vertex] at (vr) {};
\node[vertex] at (alphai) {};
\node[vertex] at (betaij) {};

% clockwise direction marks, drawn on top
\draw[cwmark] (68:\R) arc[start angle=68, end angle=52, radius=\R];
\draw[cwmark] (5:\R) arc[start angle=5, end angle=-11, radius=\R];

% labels
\node[above left=0pt,yshift=-2pt] at (v0) {$v_0$};
\node[above=1pt] at (vij) {$v_{i-j}$};
\node[above right=0pt,yshift=-2pt] at (vi) {$v_i$};
\node[right=1pt] at (vr) {$v_r$};
\node[below right=0pt,yshift=1pt] at (alphai) {$\alpha_i$};
\node[below left=0pt,yshift=4pt] at (betaij) {$\beta_{i-j}$};

\end{tikzpicture}
\caption{}
\label{fig:8_2}
\end{subfigure}
\hfill
% ---------- (2.3) ----------
\begin{subfigure}[b]{0.31\textwidth}
\centering
\begin{tikzpicture}[
    scale=1.25,
    every node/.style={font=\normalfont\normalsize},
    vertex/.style={
        circle,
        draw=black,
        fill=white,
        line width=0.75pt,
        inner sep=1.5pt
    },
    edge/.style={line width=0.65pt},
    rededge/.style={line width=0.9pt, red},
    grayedge/.style={line width=0.75pt, gray!70},
    cwmark/.style={
        line width=0.65pt,
        -{Stealth[length=1.5mm,width=1.05mm]}
    },
    midarrow/.style={
        postaction={decorate},
        decoration={
            markings,
            mark=at position 0.5 with {
                \arrow{Stealth[length=1.8mm,width=1.25mm]}
            }
        }
    }
]

\def\R{1.35}

\path[use as bounding box] (-1.85,-1.75) rectangle (2.05,1.75);

% vertices
\coordinate (v0) at (135:\R);
\coordinate (vsp) at (48:\R);
\coordinate (vr) at (0:\R);
\coordinate (wml) at (-172:\R);

% circle
\draw[edge] (0,0) circle (\R);

% chords
\draw[grayedge,midarrow] (vr) -- (v0);
\draw[rededge] (vsp) -- (wml);

% vertices
\node[vertex] at (v0) {};
\node[vertex] at (vsp) {};
\node[vertex] at (vr) {};
\node[vertex] at (wml) {};

% clockwise direction marks, drawn on top
\draw[cwmark] (70:\R) arc[start angle=70, end angle=53, radius=\R];
\draw[cwmark] (-105:\R) arc[start angle=-105, end angle=-122, radius=\R];

% labels
\node[above left=0pt,yshift=-2pt] at (v0) {$v_0$};
\node[above right=0pt,yshift=-3pt] at (vsp) {$v_{s+1}$};
\node[right=1pt] at (vr) {$v_r$};
\node[below left=0pt,yshift=3pt] at (wml) {$w_{m-\ell+2}$};

\end{tikzpicture}
\caption{}
\label{fig:8_3}
\end{subfigure}

\caption{Illustrations for Lemma \ref{lem:bal-no-long}}
\label{fig:7}
\end{figure}

For every $0\leq i\leq n-1$, let $\varepsilon_i=1$ if $F_i\in E(G)$ and  $\varepsilon_i=0$ if $F_i\notin E(G)$. 

\medskip
\textbf{(2.3)} For every $\ell\le t\le r-1$, $N[v_t]\subseteq V(v_{t-\ell}\rbjt{C}v_{t+\ell})$. Moreover $\varepsilon_0= \varepsilon_1=\cdots= \varepsilon_{s-1}=0$ and  $2\le s\le \ell-2$.

If $v_jv_t\in E(G)$ with $0\le j\le t-\ell-1$ or $t+\ell+1\le j\le r+\ell-1$, then $\ell+1\le |t-j|\le r-1$, contradicting the choice of $v_0v_r$.  If $j\ge r+\ell$, $v_0\rbjt{C}v_t$ and $v_r\rbjt{C}v_j$ both have length at least $\ell$.
But by Lemma \ref{lem:rectangle}, the existences of $v_0v_r$ and $v_tv_j$ yield a member of $C(\ell,\ell)$, a contradiction. Thus $N[v_t]\subseteq V(v_{t-\ell}\rbjt{C}v_{t+\ell})$, where $|V(v_{t-\ell}\rbjt{C}v_{t+\ell})|=2\ell+1$. 

Since  $|N[v_t]|\ge 2\ell$,  at most one vertex of $V(v_{t-\ell}\rbjt{C}v_{t+\ell})$ is non-adjacent to $v_t$. Therefore, for $\ell\le t\le r-1$, at least one of $F_{t-\ell}$ and $F_t$ belongs to $E(G)$. Since $r=\ell+s$,  for $0\le j\le s-1$, at least one of $F_{j}$ and $F_{j+\ell}$ belongs to $E(G)$.   Moreover, for any $1\leq j+1\leq s-1$,  if $F_{j+1}\in E(G)$, then by Lemma \ref{lem:antil}, $F_{\ell+j}\notin E(G)$ forcing $F_j\in E(G)$. Consequently,  $\varepsilon_0\ge \varepsilon_1\ge\cdots\ge \varepsilon_{s-1}$.

By (2.2), $F_{\ell-1}\in E(G)$. Then $F_0\notin E(G)$ by Lemma \ref{lem:antil}, i.e., $\varepsilon_0=0$. Hence $\varepsilon_0=\varepsilon_1=\cdots=\varepsilon_{s-1}=0$. If $s\ge \ell-1$,  $F_{\ell-2}\notin E(G)$. Since $0\leq \ell-2\le s-1$,  at least one of $F_{\ell-2}$ and $F_{2\ell-2}$ belongs to $E(G)$. Then $F_{2\ell-2}\in E(G)$. By Lemma~\ref{lem:antil},  $F_{\ell-1}\notin E(G)$, a contradiction. Thus $2\le s\le \ell-2$.

\medskip
Now we aim to deduce a contradiction by using (2.1)-(2.3).
%\textbf{(2.4)}  $(v_r,v_0)\in A(D)$ and $2\le s\le\ell-2$ yield a contradiction.
By (2.1),   $Z=v_0v_1\cdots v_rv_0$ is a directed cycle of length $r+1=\ell+s+1$.
Let $W=w_1\cdots w_m:=v_{r+1}\cdots v_{n-1}$. Note that $r\le n-r$. If $n=2r$, then  $(v_r,v_0)$ is  an $r$-dichord, a contradiction to (2.1). Hence  $n\ge 2r+1\ge 2\ell+2s+1$ and  $m=n-1-r\ge \ell+s=r$.

For every $0\le i\le r$,  let $X_i=\{a\in\{1,\ldots,m\}:v_iw_a\in E(G)\}$. Since $v_i$ has at most $r=\ell+s$ neighbors in $Z$,  $|X_i|\ge(2\ell-1)-(\ell+s)=\ell-s-1 >0$. Let $\alpha_i=\min X_i$ and $\beta_i=\max X_i$.

For $1\le j\le s+1$, $v_i\rbjt{Z}v_{i-j}$ has length $r+1-j\ge\ell$. Hence, if $1\le a<b\le m$, $b-a\ge\ell$, and both $v_iw_a$, $v_{i-j}w_b$ exist, Lemma~\ref{lem:rectangle} forces $b-a\le\ell-1$.
Then for every $1\le j\le s+1$, $\beta_{i-j}\le\alpha_i+\ell-1$ (see in Figure \ref{fig:8_2}). Since $|X_i|\ge \ell-s-1$,  $\alpha_i\le \beta_i-\ell+s+2$. Then we get
\[
\beta_{i-j}\le\beta_i+s+1,\qquad \alpha_i\ge\alpha_{i-j}-(s+1).
\]

By (2.2), $N[v_{\ell-1}]=V(v_0\rbjt{C}v_{2\ell-1})$. So $X_{\ell-1}=\{1,\ldots,\ell-s-1\}$.

We claim that $X_{s+1}=\{m-\ell+s+2,\ldots,m\}$. Note that $s+1={r-\ell+1}$.  For $a\le m-\ell+1$, $v_{s+1}\rbjt{C}v_r\odot(v_r,v_0)$ and $w_a\rbjt{C}v_0$ both have length at least $\ell$. By Lemma~\ref{lem:common-end}, $v_{s+1}w_a\notin E(G)$. When $a=m-\ell+2$ and $v_{s+1}w_a\in E(G)$, $v_0,v_{s+1},v_r,w_a$ appear on $C$ in this order (see in Figure \ref{fig:8_3}), and $|v_{s+1}\rbjt{C}v_r|=\ell-1$ and $|w_a\rbjt{C}v_0|=\ell-1$. By  Lemma~\ref{lem:boundary-crossing} and $(v_r,v_0)\in A(D)$, $D$ is  $C(\ell,\ell)$-free  only if $(w_a,v_{s+1})\in A(D)$. But  $|w_a\rbjt{C}v_{s+1}|=|w_a\rbjt{C}v_0|+|v_0\rbjt{C}v_{s+1}|=r$, which is an $r$-dichord, contradicting (2.1).
For $m-\ell+3\le a\le m-\ell+s+1$,   $\ell+1 \le |w_a\rbjt{C}v_{s+1}| \le r-1$, contradicting the choice of $v_0v_r$.  Hence $X_{s+1}\subseteq\{m-\ell+s+2,\ldots,m\}$, and equality follows from $|X_{s+1}|\ge \ell-s-1$.

Since $\beta_{i-j}\le\beta_i+s+1$ and $\beta_{\ell-1}=\ell-s-1$,
$\beta_{\ell-1-u}\le \ell-s-1+(s+1)\lceil u/(s+1)\rceil$ for $0\le u\le \ell-2$. 

Let $u=\ell-s-2$. Since $\beta_{s+1}=m$, we obtain $m-\ell+s+1\le (s+1)\lceil(\ell-s-2)/(s+1)\rceil$. Also, since $m\ge\ell+s$, $m-\ell+s+1\ge2s+1$. Because $|V(G)|=m+r+1=m+\ell+s+1$ and $N[w_a]\ge 2\ell$, 
every $w_a$ has at most $m-\ell+s+1$ non-neighbors in $G$.

We first determine some non-neighbors of $w_1$ and $w_m$.
If $\ell\le t\le r-1$ and $v_tw_m\in E(G)$, note that $v_0\rbjt{C}v_t$ and $v_r\rbjt{C}w_m$ both have length at least $\ell$, then $v_0v_r$ and $v_tw_m$ yield a member of $C(\ell,\ell)$ by Lemma~\ref{lem:rectangle}, a contradiction. Hence $v_tw_m\notin E(G)$ for every $\ell\le t\le r-1$. If $1\le t\le s$ and $v_tw_1\in E(G)$, then  $v_t\rbjt{C}v_r$ and $w_1\rbjt{C}v_0$ both have lengths at least $\ell$,  then $v_0v_r$ and $v_tw_1$ yield a member of $C(\ell,\ell)$ by Lemma~\ref{lem:rectangle}, a contradiction. Hence $v_tw_1\notin E(G)$ for every $1\le t\le s$.

We claim that $v_{r}w_m \in E(G)$. Write $q=\lfloor (m-\ell+s)/(s+1)\rfloor$. Then $q(s+1)<m-\ell+s+1\le(q+1)(s+1)$. 
Since $q(s+1) < m-\ell+s+1 \le (s+1)\lceil(\ell-s-2)/(s+1)\rceil < \ell-1$, we have $q(s+1) < \ell-2$. For every $0\le u\le q(s+1) < \ell-2$, we have $\beta_{\ell-1-u}\le \ell-s-1+(s+1)\lceil u/(s+1)\rceil\le \ell-s-1+q(s+1)<m$, so $w_m$ is nonadjacent to $q(s+1)+1$ consecutive vertices $v_{\ell-1-q(s+1)},\ldots,v_{\ell-1}$. Since $v_\ell w_m,\ldots,v_{r-1}w_m\notin E(G)$, we have $(q+1)(s+1)$ non-neighbors of $w_m$ in $G$. Since $w_m$ has at most $m-\ell+s+1$ non-neighbors in $G$, $(q+1)(s+1)\le m-\ell+s+1$ and hence $m-\ell+s+1=(q+1)(s+1)$. Therefore, $\{v_{\ell-1-q(s+1)},\ldots,v_{r-1}\}$ are exactly the non-neighbors of $w_m$ in $G$. In particular, $v_rw_m\in E(G)$.

We claim that $v_{0}w_1 \in E(G)$. Since $\alpha_i\ge\alpha_{i-j}-(s+1)$ and $\alpha_{s+1}=m-\ell+s+2$,  $\alpha_{s+1+u}\ge m-\ell+s+2-(s+1)\lceil u/(s+1)\rceil$ for $0\le u\le q(s+1) <  \ell-2$. Since $m-\ell+s+1=q(s+1)+s+1$, it follows that $\alpha_{s+1+u}\ge s+2>1$ for every $0\le u\le q(s+1)$. Hence $w_1$ is nonadjacent to the $q(s+1)+1$ consecutive vertices $v_{s+1},\ldots,v_{s+1+q(s+1)}$. Since $v_1w_1,\ldots,v_sw_1\notin E(G)$ and $q(s+1)+1+s=m-\ell+s+1$, the non-neighbors of $w_1$ in $G$ are exactly $\{v_1,\ldots,v_{s+1+q(s+1)}\}$. In particular, $v_0w_1\in E(G)$.

Since $w_1\rbjt{C}w_m$ and $v_0\rbjt{C}v_r$ both have length at least $\ell$, by Lemma~\ref{lem:rectangle}, and the two edges $v_0w_1$ and $v_rw_m$ yield a contradiction, completing the proof.
\end{proof}

\begin{proof}[\bfseries{Proof of Theorem~\ref{thm:Ll}}]
Assume $D$ is $C(\ell,\ell)$-free. 
When $n=2\ell+1$, since $|N[v_i]| \ge 2\ell=n-1$ for every $0\leq i\le n-1$, each vertex misses at most one local neighbor, so the missing local edges form a matching. When $n \geq 2\ell+2$, by Lemma~\ref{lem:bal-no-long}, $N[v_i] \subseteq V(v_{i-\ell}\rbjt{C}v_{i+\ell})$, where $|V(v_{i-\ell}\rbjt{C}v_{i+\ell})
|=2\ell+1$ for every $0\leq i\leq n-1$. Since $|N[v_i]|  \ge 2\ell$, the missing local edges form a matching.
Then in both cases, the missing local edges form a matching.

By Lemma \ref{lem:antil}, $F_i$ and $F_{i+\ell-1}$ cannot both exist for every $0\leq i\leq n-1$. Suppose $F_i\notin E(G)$ for some $i$. By the above argument, since $F_i\cap F_{i-\ell}=\{v_i\}$ and $F_i,F_{i-\ell}$ are local edges,  $F_{i-\ell}\in E(G)$. By Lemma \ref{lem:antil}, $F_{i-1}\notin E(G)$. 

By repeatedly carrying out the above iteration modulo $n$,  $F_j\notin E(G)$ for every $0\leq j\leq n-1$.
But $F_j\cap F_{j+\ell}=\{v_{j+\ell}\}$, which contradicts the fact that the missing local edges form a matching, completing the proof. 
\end{proof}

\section{Proof of Theorems \ref{thm:main-chi-bound} and \ref{thm:mindeg}}\label{sec-main}

We first give the proof of Theorem \ref{thm:main-chi-bound}
based on Theorem \ref{thm:mindeg}.

\begin{proof}[\bfseries{Proof of Theorem \ref{thm:main-chi-bound}}]
It suffices to show that the underlying graph of   $D$ is $(k+\ell-2)$-degenerate. Suppose the above statement is false, and let $D$ be a counterexample to it with minimum number of vertices. That is, $D$ is a $C(k,\ell)$-free $n$-vertex Hamiltonian digraph   and its underlying graph  $G$ is not $(k+\ell-2)$-degenerate, but the underlying graph of any $C(k,\ell)$-free Hamiltonian digraph  with less than $n$ vertices is $(k +\ell-2)$-degenerate. 

If $\delta(G)\geq k+\ell-1$, then by Theorem \ref{thm:mindeg}, $D$ contains a member of $C(k,\ell)$, a contradiction. Thus  we may assume $d(v_0) \leq k+\ell- 2$. Let $D'$ be the digraph obtained from $D$ by deleting $v_0$ and adding an arc $(v_{n-1}, v_1)$, and $G'$ be the underlying graph of $D'$. Clearly $D'$ is Hamiltonian, $G'$ is an underlying graph of $D'$, and $G - v_0$ is a subgraph of $G'$. Suppose $D'$ contains a member of $C(k, \ell)$, say $H'$. If  $(v_{n-1}, v_1)\notin A(H')$, then $H'$ is a subgraph of $D$, which yields a contradiction. Thus  $(v_{n-1}, v_1)\in A(H')$. However, the digraph obtained from $H'$ by replacing the arc $(v_{n-1}, v_1)$ with the directed path $v_{n-1} v_0v_1$  is a member of $C(k,\ell)$ as well, which is contained in $D$, a contradiction. Hence, the Hamiltonian digraph $D'$ is $C(k,\ell)$-free. Then by our hypothesis, $G'$ is $(k+\ell-2)$-degenerate. Since $G - v_0$ is a subgraph of $G'$ and $d(v_0) \leq k+\ell-2$, it follows that $G$  is also $(k+\ell- 2)$-degenerate. 
\end{proof}

\begin{proof}[\bfseries{Proof of Theorem \ref{thm:mindeg}}]
Since $\delta(G)\geq k+\ell-1$, $n \geq k+\ell$. 
When $n=k+\ell \geq 6$, the condition $\delta(G)\ge k+\ell-1$ forces $G\cong K_{k+\ell}$. Then  $D$ contains a Hamiltonian tournament $T$. When $n=k+\ell \geq 7$ or $(k,\ell) \neq (4,2)$, by Theorem \ref{thm:benhocine},   $D(n,\ell)\subseteq T\subseteq D$, which is a member of $C(k,\ell)$.  When $(k,\ell) = (4,2)$, note that $T_{6}^*$ is not strong, by Theorem \ref{thm:benhocine},  $D(6,2)\subseteq T\subseteq D$, which is a member of  $C(4,2)$.

Suppose $n \geq k+\ell+1 $. If $\ell=1$, the result follows from Theorem~\ref{thm:main-l1}. If $\ell=2$, then $k+\ell\ge6$ implies $k \ge 4$, and Theorem~\ref{thm:L2} gives the desired conclusion. If $\ell\ge3$,  the result follows from Theorems~\ref{thm:Ll+1} and \ref{thm:Ll}. 
\end{proof}

\section{Concluding Remark}

The degeneracy conclusion is specific to  Hamiltonian digraphs, not for all strongly connected digraphs. Indeed, for any positive integers $N$ and $k+\ell\geq 6$, we construct  a   $C(k,\ell)$-free  strongly connected digraph $D$ whose underlying graph has degeneracy $N+1$:
 Take the complete tripartite graph $K_{N,N,1}$ with parts $A,B,\{r\}$, and orient all edges  as $A\to B\to r\to A$. It is easy to check that $D$ is strongly connected and 
its underlying graph has degeneracy $N+1$. Now we prove that $D$ is $C(k,\ell)$-free for any $k+\ell\geq 6$. It is easy to check that  every directed path has length at most $4$, and every directed 
path of length at least $3$ contains $r$ as an internal vertex. Hence there is no member of $C(k,\ell)$ whenever $\max\{k,\ell\}\ge5$ or $\min\{k,\ell\}\ge3$. It remains to consider the case $\{k,\ell\}=\{4,2\}$. Since every directed path $P_1$ of length at least $4$ has endpoints in $A$ and $B$ and contains $r$ as 
an internal vertex, and since every directed path $P_2$ of length at least $2$ with the same endpoints as $P_1$ also 
contains $r$ as an internal vertex, $P_1$ and $P_2$ cannot be internally vertex-disjoint. 

\section*{Acknowledgement}
\noindent This research is supported by National Key R\&D Program of China under grant number 2024YFA1013900 and NSFC under grant number 12471327, 12401447. 
\section*{Declaration}
	
\noindent$\textbf{Conflict~of~interest}$
The authors declare that they have no known competing financial interests or personal relationships that could have appeared to influence the work reported in this paper.
\vskip 2mm	
\noindent$\textbf{Data~availability}$
No data was used for the research described in the article.

\end{document}